\theoremstyle{plain}
\newtheorem{theorem}{Theorem}[section]
\newtheorem{proposition}[theorem]{Proposition}
\newtheorem{lemma}[theorem]{Lemma}
\newtheorem{corollary}[theorem]{Corollary}
\theoremstyle{definition}
\newtheorem{example}[theorem]{Example}
\newtheorem{definition}[theorem]{Definition}
\newtheorem{remark}[theorem]{Remark}
\newtheorem{conjecture}[theorem]{Conjecture}
\newtheorem{algorithm}[theorem]{Algorithm}
\newtheorem{procedure}[theorem]{Procedure}
\newtheorem{problem}[theorem]{Problem}
\newcommand\MyAlgorithm[1]{
 \goodbreak\bigskip
 \hrule
 \begin{algorithm}\label{alg:#1} 
 \textsc{#1}
 \end{algorithm}
 \hrule\medskip
}
\newcommand\MyProcedure[1]{
 \goodbreak\bigskip
 \hrule
 \begin{procedure}\label{alg:#1} 
 \textsc{#1}
 \end{procedure}
 \hrule\medskip
}
\newcommand{\col}[2]{{#1}\,{:}\,{#2}}  
\newcommand{\sat}[2]{\col{#1}{#2^\infty}}
\newcommand{\Sat}[1]{\mathop{\rm Sat}\nolimits_{#1}}
\newcommand{\wSat}[1]{\mathop{\rm wSat}\nolimits_{#1}}
\newcommand{\ideal}[1]{\langle #1 \rangle}
\newcommand{\define}[1]{\textbf{\boldmath #1}}
\let\phi=\varphi
\let\rho=\varrho
\let\theta=\vartheta
\let\epsilon=\varepsilon
\newcommand\g{{\mathbf g}}
\newcommand\LC{\mathop{\rm LC}\nolimits}
\newcommand\LT{\mathop{\rm LT}\nolimits}
\newcommand\LTs{\mathop{\rm LT_\sigma}\nolimits}
\newcommand\LM{\mathop{\rm LM}\nolimits}
\newcommand\NF{\mathop{\rm NF}\nolimits}
\newcommand\Mat{\mathop{\rm Mat}\nolimits}
\newcommand\SB{\mathop{\rm SB}\nolimits}
\newcommand\ev{\mathop{\rm ev}\nolimits}
\newcommand\pig{\mathop{\pi_\g}\nolimits}
\def\Supp{\mathop{\rm Supp}\nolimits}
\newcommand\Rel{\mathop{\rm Rel}\nolimits}
\newcommand\Relg{\mathop{\rm Rel}_\g\nolimits}
\newcommand\saremLT{\mathop{\mathcal{SR}_{\LT}}\nolimits}
\newcommand\sarem{\mathcal{SR}}
\newcommand\SLTrem{$\mathcal{S}_{\LT}$-remainder\xspace}
\newcommand\Srem{$\mathcal{S}$-remainder\xspace}
\newcommand\satinterr{\mathrm{Sat}\mathcal{SI}}
\newcommand \ie {\textit{i.e.}}
\newcommand \NN {{\mathbb N}}
\newcommand \QQ {{\mathbb Q}}
\newcommand \CC {{\mathbb C}}
\newcommand \TT {{\mathbb T}}
\newcommand \ZZ {{\mathbb Z}}
\let\implies=\Rightarrow
\newcommand\sagbi{SAGBI basis\xspace}
\newcommand\sagbis{SAGBI bases\xspace}
\newcommand\satsagbi{SatSAGBI basis\xspace}
\newcommand \cocoa{\mbox{\rm
C\kern-.13em o\kern-.07 em C\kern-.13em o\kern-.15em A}}
\begin{document}

\begin{frontmatter}

\title{Saturations of Subalgebras, SAGBI Bases, and U-invariants}
\author{Anna Maria Bigatti}
\address{\scriptsize Dip. di Matematica,
\ Universit\`a degli Studi di Genova, \ Via
Dodecaneso 35,\
I-16146\ Genova, Italy}
\ead{bigatti@dima.unige.it}

\author{Lorenzo Robbiano}
\address{\scriptsize Dip. di Matematica,
\ Universit\`a degli Studi di Genova, \ Via
Dodecaneso 35,\
I-16146\ Genova, Italy}
\ead{robbiano@dima.unige.it}

\thanks{This research was partly supported  by the 
``National Group for Algebraic and Geometric Structures, and their 
Applications'' (GNSAGA-INdAM).}

\begin{abstract}
Given a polynomial ring~$P$ over a field~$K$, an element~$g \in P$, and
a $K$-subalgebra~$S$ of~$P$, we deal with the problem of saturating~$S$
with respect to~$g$,  i.e.\ computing $\Sat{g}(S) = S[g, g^{-1}]\cap P$. 
In  the general case we
describe a procedure/algorithm to compute a set of generators for $\Sat{g}(S)$
which terminates if and only if it is finitely generated.
Then we consider the more interesting case when $S$ is graded.
In particular, if $S$ is graded by a positive matrix~$W$ and  $g$ is an indeterminate, 
we show that 
if we choose a term ordering $\sigma$ of $g$-{\tt DegRev} type compatible with~$W$, 
then the two operations of computing a $\sigma$-\sagbi of $S$ and saturating $S$ with respect to~$g$ commute.
This fact opens the doors to nice algorithms for the computation of~$\Sat{g}(S)$.
In particular, under special assumptions on the grading
one can use the truncation of a $\sigma$-\sagbi 
and get the desired result. Notably, this technique can be applied to the problem of
directly computing some $U$-invariants, classically called semi-invariants, 
even in the case that $K$ is not the 
field of complex numbers.
\end{abstract}

\date{\today}

 \begin{keyword}
 Subalgebra saturation, \sagbis, \cocoa, $U$-invariants
\MSC[2010]{  
 13P10,  	
 08A30,  	
  13-04,  	
 14R20, 	
68W30  	
}
\end{keyword}

\end{frontmatter}

\section{Introduction}
\label{Introduction}
This paper has two main ancestors. Our attention to the problem 
discussed here was drawn by a nice discussion with Claudio Procesi 
about the paper~\cite{KP} where the following claim is made: 
\textit{If we want to understand $U$-invariants  
from these formulas  it is necessary to compute the intersection 
$S_n =\CC[c_2,\dots, c_n][a_0,a_0^{-1}]\cap \CC[a_0,\dots, a_n]$}.
Here~$\CC$ denotes the field of complex numbers, $a_0, \dots, a_n$ 
are indeterminates, and the formulas are expressions of the~$c_i$ given, 
for $i=1,\dots,6$,  as follows:

$$
\begin{array}{ccl}
c_2  &= & -a_1^{[2]}  +a_0a_2     \cr
c_3  &=&  2a_1^{[3]}  -a_0a_1a_2  +a_0^2a_3     \cr
c_4 &=&  -3a_1^{[4]}  +a_0a_1^{[2]}a_2  -a_0^2a_1a_3  +a_0^3a_4   \cr
c_5 &=&   4a_1^{[5]}  -a_0a_1^{[3]}a_2  +a_0^2a_1^{[2]} a_3  
-a_0^3a_1a_4  + a_0^4a_5\cr
c_6 &=& -5a_1^{[6]}+ a_0a_1^{[4]}a_2 -a_0^2a_1^{[3]}a_3 + a_0^3a_1^{[2]}a_4 
-a_0^4a_1a_5 +a_0^5a_6

\end{array}
$$
where $\alpha^{[i]}$ means $\tfrac{1}{i!}\alpha^i$. 
In version 1 of~\cite{KP}  the theoretical background for this claim
was fully explained and  in its Section~3.5 a sketch of an algorithm to compute 
$S_n$ was illustrated.
In version 3 of~\cite{KP} the authors dropped the section about the algorithm and 
wrote: 
\textit{A general algorithm for these types of problems has been in fact developed by 
Bigatti-Robbiano in a recent preprint},
referring to the first arXiv version of this paper.

\smallskip
 Why are the elements of $S_n$ called $U$-invariants?
A detailed explanation can be found in~\cite{KP}. For the sake of completeness,
let us summarise it here.

Let $\CC[x]_{\le n}$ denote the vector space of polynomials in $\CC[x]$ of degree $\le n$.
The algebra~$S_n$ of $U$-invariants of polynomials of degree $n$, is the subalgebra of the 
algebra of polynomial functions on $\CC[x]_{\le n}$  which are invariant under 
the action of $(\CC,+)$ 
defined by $p(x) \to p(x + \lambda)$  for $\lambda \in \CC$.
Now let ${\mathbb H}\CC[x,y]_n$ denote the vector space of homogeneous 
polynomials in $\CC[x, y]$ of degree~$n$,
and let $U = \{ (\begin{smallmatrix} 1 & \lambda \\ 0 & 1 \end{smallmatrix}) \}$, 
the unipotent subgroup of ${\rm SL}(2,\CC)$.
We can identify $\CC[x]_{\le n}$  with ${\mathbb H}\CC[x,y]_n$, and then the action 
of $(\CC,+)$ can be identified with the 
action of $U$ on the algebra of polynomial functions on~${\mathbb H}\CC[x,y]_n$.

For example, let $f(x) = a_0\tfrac{x^2}{2} +a_1x+a_2\in \CC[x]_{\le2}$
and compute $f(x+\lambda)$.
$$f(x+\lambda) \;=\; a_0\tfrac{(x+\lambda)^2}{2} +a_1(x+\lambda) +a_2 
\;=\;
a_0\tfrac{x^2}{2} +(a_0\lambda+a_1)x +
(a_0\tfrac{\lambda^2}{2}+a_1\lambda +a_0)
\,.$$
Then consider $c_2  = -a_1^{[2]}  +a_0a_2 = -\frac{a_1^2}{2}  +a_0a_2 $ (as
above), and compute the new $c_2$ relative to 
the coefficients of $f(x+\lambda)$.  We get
$-\tfrac{(a_0\lambda+a_1)^2}{2}
+a_0(a_0\tfrac{\lambda^2}{2}+a_1\lambda +a_2)$ 
which is equal to $c_2$ for every $\lambda\in \CC$, proving that $c_2$ is a $U$-invariant.

\medskip
The first motivation for our investigation is that
the problem of computing a set of generators of 
$S_n=\CC[c_2,\dots, c_n][a_0,a_0^{-1}]\cap \CC[a_0,\dots, a_n]$
can be viewed as a special case of the following task.

\begin{problem}\label{problem}
\textit{Given a field~$K$, a polynomial ring 
$P=K[a_0,a_1, \dots, a_n]$, and polynomials ${g_1,\dots, g_r \in P}$, let~$S$ 
denote the subalgebra $K[g_1, \dots, g_r]$ of~$P$, and let 
$g \in P{\setminus} \{0\}$. 
The problem is to  compute generators of the $K$-algebra 
$S[g, g^{-1}]\cap P$.}
\end{problem}

\smallskip

The second motivation for taking on this challenge is  the evidence of the 
analogy with the standard problem in computer algebra of computing the 
saturation of an ideal. 
The analogy is clearly explained by recalling that the saturation 
of an ideal $I \subseteq P$
with respect to~$g$ is the ideal $IP[g^{-1}]\cap P$.
How to compute the saturation of an ideal with respect to an element
in~$P$
and also with respect  to another ideal is well-understood and its 
solutions are described in the literature (see for instance~\cite[Section 3.5.B]{KR1}
 and~\cite[Sections 4.3  and 4.4]{KR2}) 
and implemented in most computer algebra systems.

On the other hand, the main problem formulated above 
has not received the same attention.
In this paper we describe a solution
if the algebra $\Sat{g}(S)= S[g, g^{-1}]\cap P$ is finitely generated.
As suggested by Gregor Kemper, a similar description is contained
in~\cite[Semi-algorithm 4.10.16]{DK2015}.

Then we present algorithms  removing redundant generators of $\Sat{g}(S)$ .
A strategy is to use elimination techniques, another strategy is to
make a good use of \sagbis. The acronym SAGBI 
stands for ``Subalgebra Analog to Gr\"obner Bases for Ideals''.
The theory of \sagbis was introduced by Robbiano and Sweedler in~\cite{RSOld}
and independently by Kapur and Madlener in~\cite{KaMa}.
Since then many 
improvements and applications were discovered (see for instance~\cite{CHV}).
A more modern approach is contained
in~\cite[Section 6.6]{KR2}, and~\cite[Chapter 11]{Stu96}, and a nice survey is 
described in~\cite{Bra}.
In~\cite{StTs} there are  results somehow related to this paper.

In the case $\Sat{g}(S)$ is not finitely generated, 
the algorithms turns out to be merely procedures
providing a sequence of algebras ever closer to $\Sat{g}(S)$.
 We show that this phenomenon can happen 
(see Examples~\ref{ex:nonsatreduce} and~\ref{ex:notfinite}),
which is not a surprise, since there are even examples of 
finitely generated subalgebras of a polynomial ring whose 
intersection is not finitely generated (see for instance~\cite{Mondal2017}).

We observe that our solutions do not require any assumption about the 
base field~$K$, and do not need that 
the polynomials $g_1, \dots g_r$ are homogeneous.
However, if they are homogeneous we have better results 
in Sections~\ref{sec:The Graded Case: Introduction}, \ref{sec:Minimalization}, 
\ref{sec:The Graded Case - Saturation}, 
which make a clever use of \sagbis and are the 
core of our paper.

\medskip
Now we give a  more precise description of the content of the paper.
The general setting is as follows.
We are given a field~$K$, a polynomial ring~$P$ over $K$,
a $K$-sub\-algebra~$S$ of~$P$,
and an element $g \in P{\setminus}\{0\}$.

In Section~\ref{sec:Basic Results}  we introduce the notion 
of the saturation $\Sat{g}(S)$ of  $S$ with respect to $g$. 
The main point is that if $g \in S$, then $\Sat{g}(S)=S:g^\infty$ 
(see Definition~\ref{def:ialgebrasatur}), as shown in 
Proposition~\ref{prop:MainProblem}. Using this fact we can rephrase 
the main problem addressed in this paper (see Problem~\ref{problem}).

Section~\ref{sec:The General Case} provides a first solution.
After recalling standard results in computer algebra 
(see Propositions~\ref{prop:subalgebraRepr}
and~\ref{prop:subalgebraMember}) we prove Theorem~\ref{thm:main} 
which shows how to add new elements to $S$ in order to get closer to  $\Sat{g}(S)$.
With the help of this result we prove Theorem~\ref{thm:inclusion} 
and Corollary~\ref{cor:sequence}. They provide the building 
blocks for Algorithm~\ref{alg:SubalgebraSaturation} which solves the problem if  $\Sat{g}(S)$ 
is finitely generated.  If not, the algorithm does not terminate
producing an infinite sequence of subalgebras ever closer to $\Sat{g}(S)$.
A case of $\Sat{g}(S)$ not being a finitely generated  $K$-algebra is shown in
Example~\ref{ex:nonsatreduce}.

Algorithm~\ref{alg:SubalgebraSaturation} 
is largely inspired by the suggestion contained in~\cite{KP} and similar 
to~\cite[Semi-algorithm 4.10.16]{DK2015}.

At this point we describe methods for rewriting the computed generators of $\Sat{g}(S)$.
The first part of Section~\ref{sec:Subalgebra-reduction} recalls different procedures to reduce 
elements in a subalgebra and the second part generalises these techniques to combine
reduction and saturation. 

As anticipated, our problem shows different features when the subalgebra~$S$ is graded.
Section~\ref{sec:The Graded Case: Introduction} marks the beginning of the most 
original part of the paper by showing some good aspects of 
this setting. Nevertheless, even in the graded case there are examples of subalgebras
whose saturation is not finitely generated (see Example~\ref{ex:notfinite}).

After the first glimpse provided in the previous section into the theory of \sagbis, 
its full strength comes alive in the case of a graded subalgebra.

The first benefit from the assumption that our subalgebra $S$ is positively 
graded is described in Section~\ref{sec:Minimalization} where we show how
to make a good use of a truncated \sagbi for minimalizing the generators of $S$
(see Algorithm~\ref{alg:SubalgebraMinGens}). 

Then we come to the  main novelties contained in Section~\ref{sec:The Graded Case - Saturation}.
Given a grading defined by a positive matrix $W$ and an indeterminate, say $a_0$, there are
special term orderings called of $a_0$-{\tt DegRev} type compatible with $W$ 
(see Definition~\ref{def:degrevtype}). If $\sigma$  is one of these, its full power is shown in Theorem~\ref{thm:SatandSAGBI}
which essentially says that the two operations of  
computing a $\sigma$-\sagbi of $S$ and saturating~$S$ commute. 
Using this fact, if the subalgebra~$S$ 
has a finite $\sigma$-\sagbi, then the problem of saturating $S$ with respect to $a_0$ is 
\textit{essentially} solved, and not only we get  a set of generators of $\Sat{a_0}(S)$ but 
also its $\sigma$-\sagbi.
Procedure~\ref{alg:SatSAGBI} captures this idea, and some examples show its good behaviour (see for instance  Examples~\ref{ex:standardgraded}
and~\ref{ex:standardgradedMedium}).
However, the reason why we said \textit{essentially} is that
 currently  we can only conjecture that the procedure is indeed an algorithm,
\ie~terminates, whenever $\Sat{a_0}(S)$ is finitely generated.

Finally, in Section~\ref{sec:U-invariants} we come back to the beginning of the story and 
use our methods to compute $U$-invariants via the computation of the algebras $S_n$. 
The ideas developed in 
Section~\ref{sec:The Graded Case - Saturation} frequently collide with the fact that 
computing a \sagbi can be very expensive. In many cases it is even not clear if it is finite or not.
So, what about computing a truncated \sagbi, as described in Section~\ref{sec:Minimalization}?
The problem is that the saturation of a polynomial with respect to $a_0$ lowers its degree unless
$\deg(a_0) = 0$, and unfortunately this condition cannot be paired with a term ordering of 
$a_0$-{\tt DegRev} type.  However, if the subalgebra $S$ is graded also with respect to another 
grading with $\deg(a_0) > 0$, we are in business. And this is the case of $U$-invariants.
Given a multi-grading of this special type and a term ordering of \hbox{$a_0$-{\tt DegRev}} 
type compatible with it we have the nice Algorithm~\ref{alg:TruncSatSAGBI}.
The algebras $S_3$ and~$S_4$ can be easily computed, and
indeed we compute even a \sagbi of them
together with a minimal  set of generators. 
But when we come to $S_5$ and~$S_6$ we need a bit of extra information 
which comes from the classical work, namely that the maximum weighted degree is 45 for both.
The weighted degree is such that $\deg(a_0) = 0$, so it suffices to use
Algorithm~\ref{alg:TruncSatSAGBI}, truncating the computation in weighted degree 45.
Once the truncated \sagbi is computed, we can use Algorithm~\ref{alg:SubalgebraMinGens} 
to get a minimal set of generators. To see some information about the computation 
of $S_5$ and $S_6$ see Examples~\ref{ex:U5} and \ref{ex:U6}.

\medskip
All the examples described in the paper were computed 
on a MacBook Pro~2.9GHz Intel Core~i7, using our implementation
in \cocoa\,5.
Unless explicitly stated otherwise,
we use the definitions and notation given in~\cite{KR1} and~\cite{KR2}.



\section*{Acknowledgements}
Thanks are due to Claudio Procesi who drew our attention 
to the problem. Moreover, the algorithm suggested in~\cite{KP} was  a
source of inspiration for our work.
Thanks are due to Hanspeter Kraft who took the job of 
checking that our results concerning $U_5$  (see Example~\ref{ex:U5})
are in agreement with the classical knowledge.
Special thanks are also due to Kraft and Procesi for modifying
  their paper by citing the preprint of this paper, as mentioned above.
We are grateful to Gregor Kemper for pointing out the 
result contained in~\cite[Semi-algorithm 4.10.16]{DK2015}.
Finally, we thank the referees for their careful reading and useful
suggestions.

\section{Basic Results}
\label{sec:Basic Results}

In this section we recall some basic definitions and results. 
In particular, we define the weak saturation and the saturation
of a subalgebra of~$P$  with respect to an element, 
which allows us to rewrite the main problem described in the introduction 
(see Problem~\ref{problem}).

In the following we  let~$K$ be a field, let $a_0,a_1,\dots, a_n$ be indeterminates, 
and let ${P = K[a_0,a_1, \dots, a_n]}$. 
The word \textit{term} is a synonym of \textit{power product}
while the word \textit{monomial} indicates  
a \textit{power product  multiplied by a coefficient}.
Consequently, if $\sigma$ is a term ordering and~$f$ is a polynomial,
the symbols $\LT_\sigma(f)$, $\LM_\sigma(f)$, $\LC_\sigma(f)$ denote the 
leading term, the leading monomial, and the leading coefficient of~$f$, 
so that we have $\LM_\sigma(f) = \LC_\sigma(f) \cdot \LT_\sigma(f) $.
For the ideal generated by  elements $g_1, \dots, g_r$ we use the notation $\ideal{g_1, \dots, g_r}$.

For a polynomial $f\in P$ we write \define{$\sat{f}{g}$} to denote
the \define{saturation of~$f$ with respect to~$g$}, i.e.\ 
the polynomial $f/g^i$, where $g^i$ is the highest 
power of~$g$ which divides~$f$.
 Given a subset~$T \subseteq P$,
 the $K$-subalgebra of~$P$ generated by $T$ is denoted by~\define{$K[T]$}.

We recall some definitions and properties 
from the context of ideals.
Let~$I$ be an ideal in~$P$, and $g\ne0$ in~$P$.
We first recall the \define{colon ideal}, defined as
$\col{I}{g} = \{f \in P \mid g\,f \in I\}$.
Notice that we naturally have that $\col{I}{g}$ is an ideal, and
$I \subseteq \col{I}{g}$.

Then, we recall
 the \define{saturation of~$I$ with respect to the element~$g$}, defined as
$\sat{I}{g} =  \bigcup_{i\in\NN} \col{I}{g^i}$,
and we also have $\sat{I}{g} = IP_g\cap P=IP[g^{-1}]\cap P$.

Next, we generalize the definitions above to the context of
  subalgebras, and we point out some properties which do not extend to
  this setting.

\begin{definition}\label{def:ialgebrasatur}
Let~$S$ be a $K$-subalgebra of the polynomial ring~$P$, and let
$g\ne0$ in~$P$.
\begin{enumerate}
\item  The subalgebra \define{$S[g^{-1}] \cap P$} is called the 
\define{weak saturation of~$S$ with respect to~$g$} and denoted
by \define{$\wSat{g}(S)$}.

\item  The subalgebra \define{$S[g, g^{-1}] \cap P$} is called the 
\define{saturation of~$S$ with respect to~$g$} and denoted
by \define{$\Sat{g}(S)$}.

\item We denote the
set $\{ f \in P \mid g^i\,f\in S\}$ by \define{$\col{S}{g^i}$}
and the set $\bigcup_{i \in \NN} \col{S}{g^i}$ by \define{$S:g^\infty$}.
\end{enumerate}
\end{definition}

\begin{remark}\label{rem:moresaturation}
Notice that $S\subseteq \col{S}{g}$  if and only if $g \in S$.
Thus, only in this case
$\col{S}{g^i}$ is an ascending chain of sets for increasing
$j\in\NN$.
We also observe that $S = \col{S}{g^0} \subseteq \sat{S}{g}$.
\end{remark}

The following example shows that in general $\col{S}{g}$ and $\sat{S}{g}$ are
  not subalgebras.

\begin{example}\label{rem:nosubalgebra}
Let $P=\QQ[a_0,a_1]$ and $S = \QQ[a_0a_1]\subseteq P$.
Trivially, $a_1$ is in $\col{S}{a_0}$, but
its square $a_1^2$ is not in~$\col{S}{a_0}$
 because $a_0a_1^2\not\in S$.

Now consider $S = \QQ[a_0^2a_1]\subseteq P$.
Then  $a_0a_1\in\col{S}{a_0}$, and $a_1\in\col{S}{a_0^2}$,
thus they are in $\sat{S}{a_0}$,
but their sum $a_0a_1+a_1$ is not in~$\sat{S}{a_0}$ because
$a_0^d(a_0a_1+a_1) \not\in S$ for any $d\in\NN$.
\end{example}


Next, we prove that if $g\in S$ then $\sat{S}{g}$ is a
  $K$-subalgebra of~$P$, and $\sat{S}{g}$
is indeed the saturation of~$S$ with
  respect  to~$g$.

\begin{proposition}\label{prop:MainProblem} 
Let~$S$ be a $K$-subalgebra of~$P$,
and $g\ne0$ in~$P$.
\begin{enumerate}
\item We have $\sat{S}{g} \subseteq \wSat{g}(S)$.

\item We have $\wSat{g}(\wSat{g}(S))=\wSat{g}(S)$.

\item If $A$ is a $K$-subalgebra of $P$ with
$S\subseteq A\subseteq \wSat{g}(S)$, then $\wSat{g}(A)=\wSat{g}(S)$.

\item \label{satequal} If $g \in S$ we have  $\sat{S}{g} = \wSat{g}(S)= \Sat{g}(S)$.
\end{enumerate}
\end{proposition}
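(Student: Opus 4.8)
The plan is to carry out everything inside the domain $P[g^{-1}]$, where by definition $\wSat{g}(S) = S[g^{-1}] \cap P$ and $\Sat{g}(S) = S[g,g^{-1}] \cap P$, and to reduce each claim to two soft facts about adjoining $g^{-1}$: the operation $A \mapsto A[g^{-1}]$ is monotone, and if $A \subseteq B$ with $g^{-1} \in B$ and $B$ a $K$-subalgebra of $P[g^{-1}]$, then $A[g^{-1}] \subseteq B$. For part~(a) I would take $f \in \sat{S}{g}$, pick $i$ with $g^i f \in S$, and write $f = (g^i f)\,g^{-i} \in S[g^{-1}]$; since $f \in P$ this places $f$ in $S[g^{-1}] \cap P = \wSat{g}(S)$.

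For part~(c), monotonicity gives $\wSat{g}(S) = S[g^{-1}] \cap P \subseteq A[g^{-1}] \cap P = \wSat{g}(A)$ from $S \subseteq A$. For the reverse inclusion I would use the upper bound: $A \subseteq \wSat{g}(S) \subseteq S[g^{-1}]$ together with $g^{-1} \in S[g^{-1}]$ forces $A[g^{-1}] \subseteq S[g^{-1}]$, hence $\wSat{g}(A) = A[g^{-1}] \cap P \subseteq S[g^{-1}] \cap P = \wSat{g}(S)$. Part~(b) is then just the instance of~(c) with $A = \wSat{g}(S)$, which is a $K$-subalgebra of $P$ sandwiched as $S \subseteq A \subseteq \wSat{g}(S)$.

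Part~(d) is the heart of the statement and the only place where the hypothesis $g \in S$ enters. Since $g \in S$ we have $S[g] = S$, so $S[g,g^{-1}] = S[g^{-1}]$ and therefore $\Sat{g}(S) = \wSat{g}(S)$ immediately. To finish I must show $\sat{S}{g} = \wSat{g}(S)$; by~(a) only the inclusion $\supseteq$ remains. The key claim is that, when $g \in S$, every element of $S[g^{-1}]$ can be written as $s/g^m$ with $s \in S$ and $m \in \NN$: a general element is a finite sum $\sum_{k=0}^{m} s_k\,g^{-k}$ with $s_k \in S$, and clearing to the common denominator $g^m$ gives the numerator $\sum_{k=0}^{m} s_k\,g^{m-k}$, which lies in $S$ precisely because $g \in S$ makes each $g^{m-k} \in S$. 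Granting this, any $f \in \wSat{g}(S)$ satisfies $f = s/g^m$, hence $g^m f = s \in S$, so $f \in \col{S}{g^m} \subseteq \sat{S}{g}$.

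The main obstacle is exactly this common-denominator step, and it is where the proof genuinely needs $g \in S$: that hypothesis is what keeps the cleared numerators inside $S$. Without it $S[g^{-1}]$ need not be the localization of $S$ at the powers of $g$, and Example~\ref{rem:nosubalgebra} shows that $\sat{S}{g}$ may then fail to be a subalgebra at all, so one cannot expect the inclusion of~(a) to be an equality in general.
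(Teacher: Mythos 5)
Your proof is correct and follows essentially the same route as the paper's: parts (a) and (d) are argued identically, with the common-denominator step in (d) being exactly where the paper also invokes $g \in S$ to keep the cleared numerator $\sum_k s_k g^{m-k}$ inside $S$. The only difference is organizational — you prove (c) directly from the observation that $A[g^{-1}] \subseteq S[g^{-1}]$ whenever $A \subseteq S[g^{-1}]$ and then deduce (b) as the special case $A = \wSat{g}(S)$, whereas the paper proves (b) first by explicitly expanding the double sum $\sum s_{j_i} g^{-i-j_i}$ and derives (c) from it; the two arguments are interchangeable.
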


\begin{proof}
To prove claim (a) we observe that for $f \in \sat{S}{g}$ 
there exists $r$ such that $g^r f \in S$ 
hence $f  = g^r f (g^{-1})^r  \in S[g^{-1}]\cap P$.

To prove claim (b) it is clearly enough to show 
$\wSat{g}(\wSat{g}(S))\subseteq\wSat{g}(S)$.
Let $f = \sum_{i=0}^d f_ig^{-i}$ with $f \in P$ and $f_i \in \wSat{g}(S)$ 
for $i=0,\dots, d$. Then we have the equalities 
$f_i = \sum_{j_i=0}^{\delta_i}s_{j_i}g^{-j_i}$
with $f_i \in P$ for $i=0,\dots,\delta_i$ and $s_{j_i} \in S$ for $i=0,\dots, d$, 
$j_i = 0,\dots, \delta_i$.  Hence we have 
$f =\sum_{\substack{i=0,\dots, d\\ j_i=0,\dots, \delta_i}} s_{j_i}g^{-i-j_i}$,
which shows that $f \in \wSat{g}(S)$.

Let us prove claim (c).
From the assumption $S\subseteq A\subseteq \wSat{g}(S)$ 
we get the chain of inclusions 
$\wSat{g}(S) \subseteq \wSat{g}(A) \subseteq \wSat{g}( \wSat{g}(S))$, and the 
conclusion follows from claim (b).

Finally,  we prove claim (d). The second equality is obvious, 
and from (a) we get the inclusion $\sat{S}{g} \subseteq \wSat{g}(S)$.
To conclude the proof, we need to show the
inclusion $\Sat{g}(S)\subseteq \sat{S}{g}$.
An element  $f \in S[g^{-1}]\cap P$  can be viewed as 
polynomial in $g^{-1}$ with coefficients~$s_i\in S$, hence it can be written as 
$f = \sum_{i=0}^d s_i g^{-i}=(\sum _{i=0}^d  s_ig^{d-i})/g^d$.
The assumption $g\in S$ implies that 
$f g^d = \sum _{i=0}^d  s_i g^{d-i} \in S$, hence $f \in \sat{S}{g}$.
\end{proof}

Example~\ref{rem:nosubalgebra} shows that without the assumption $g \in S$,
the set $\sat{S}{g}$ need not  be a $K$-algebra, hence  
 the inclusion in item (a) may be strict.

Under the light of these definitions and properties, we rephrase
the problem stated in the introduction (see~Problem~\ref{problem})
with the extra assumption that $g\in S\setminus\{0\}$.

\begin{problem}\label{problem2}
\textit{Given a field~$K$, a polynomial ring 
$P=K[a_0,a_1, \dots, a_n]$, and polynomials $g_1,\dots, g_r \in P$, let~$S$ 
denote the subalgebra $K[g_1, \dots, g_r]$ of~$P$, 
and let $g\in S\setminus\{0\}$.
The problem is to  compute a set of generators of $\sat{S}{g}$}.
\end{problem}

The assumption that $g$ is an element of $S$ can be weakened as shown 
by the following proposition.

\begin{proposition}\label{prop:Polya0InS}
Let $S$ be a $K$-subalgebra of $P$, let $g \in P{\setminus}\{0\}$,
and let 
$f(z)\in K[z]{\setminus}K$. 
If $f(g) \in S$  then
$\wSat{g}(S)=\Sat{g}(S[g]) = \sat{S[g]}{g}$.
\end{proposition}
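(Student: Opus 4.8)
The plan is to deduce the three-way equality by applying Proposition~\ref{prop:MainProblem}(d) to the enlarged subalgebra $S[g]$, which contains $g$ by construction. Since $g \in S[g]$, part~(d) gives at once $\sat{S[g]}{g} = \wSat{g}(S[g]) = \Sat{g}(S[g])$, so the second and third algebras in the statement already coincide and both equal $\wSat{g}(S[g])$. Thus the whole proposition reduces to the single identity $\wSat{g}(S) = \wSat{g}(S[g])$, i.e.\ to showing that adjoining $g$ to $S$ leaves the weak saturation unchanged.

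Unwinding the definition, $\wSat{g}(S) = S[g^{-1}]\cap P$ while $\wSat{g}(S[g]) = S[g,g^{-1}]\cap P$, and the inclusion $\subseteq$ is immediate. For the reverse inclusion it suffices to prove the algebra identity $g \in S[g^{-1}]$, because this forces $S[g] \subseteq S[g^{-1}]$ and hence $S[g,g^{-1}] = S[g^{-1}]$. This is the one place the hypothesis $f(g)\in S$ enters, and it is the crux of the argument. Writing $f(z) = c_d z^d + c_{d-1} z^{d-1} + \dots + c_0$ with $c_d \ne 0$ and $d = \deg(f) \ge 1$ (using $f \notin K$), the key step is to multiply the relation $f(g) \in S$ by $g^{-(d-1)}$, obtaining
$$f(g)\,g^{-(d-1)} = c_d\,g + c_{d-1} + c_{d-2}\,g^{-1} + \dots + c_0\,g^{-(d-1)}.$$
Since $c_d$ is invertible in $K$, I would then solve for $g$:
$$g = c_d^{-1}\bigl(f(g)\,g^{-(d-1)} - c_{d-1} - c_{d-2}\,g^{-1} - \dots - c_0\,g^{-(d-1)}\bigr),$$
and the right-hand side visibly lies in $S[g^{-1}]$ because $f(g) \in S$ and every $c_i \in K$. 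Hence $g \in S[g^{-1}]$, as needed.

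Assembling the pieces yields the chain $\wSat{g}(S) = \wSat{g}(S[g]) = \Sat{g}(S[g]) = \sat{S[g]}{g}$. I do not expect a genuine obstacle: the only substantive ingredient is the elementary inversion above, and the hypothesis that $f$ is non-constant is precisely what guarantees both $d \ge 1$ and an invertible leading coefficient $c_d$, which is exactly what the manipulation requires. The one detail worth stating cleanly is the boundary case $d = 1$, in which all negative powers of $g$ drop out and the computation degenerates to $g \in S$; the displayed formulas remain correct as written.
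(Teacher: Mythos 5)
Your proof is correct, but it takes a genuinely different route from the paper's. The paper proves the identity $\wSat{g}(S)=\Sat{g}(S[g])$ by induction on $\deg f$: from $f(g)\in S$ it extracts $\tilde f(z)=\sum_{i\ge 1}c_iz^{i-1}$, notes that $g\tilde f(g)=f(g)-c_0\in S$ places $\tilde f(g)$ in $S:g^\infty\subseteq\wSat{g}(S)$, forms the intermediate algebra $A=S[\tilde f(g)]$ with $S\subseteq A\subseteq\wSat{g}(S)$, invokes Proposition~\ref{prop:MainProblem}(a) and (c) to get $\wSat{g}(A)=\wSat{g}(S)$, and closes the induction using $A[g]=S[g]$. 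You instead bypass the induction entirely: after reducing everything to $\wSat{g}(S)=\wSat{g}(S[g])$ via Proposition~\ref{prop:MainProblem}(d) applied to $S[g]$ (exactly as the paper does for the second equality), you establish the single identity $g\in S[g^{-1}]$ by the explicit inversion $g=c_d^{-1}\bigl(f(g)\,g^{-(d-1)}-c_{d-1}-\dots-c_0\,g^{-(d-1)}\bigr)$, which immediately forces $S[g,g^{-1}]=S[g^{-1}]$ inside the localization $P_g$ and hence the desired equality after intersecting with $P$. Your argument is shorter and needs only part (d) of Proposition~\ref{prop:MainProblem}; the one point to state explicitly is that the manipulation takes place in $P_g$, which is legitimate since $P$ is a domain and $g\ne 0$. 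The paper's induction, by contrast, works with honest polynomial identities in $P$ and exhibits the concrete new elements $\tilde f(g)$ one adjoins at each step, which fits the computational spirit of the surrounding sections; but as a proof of the stated proposition your direct version is equally valid and arguably cleaner.
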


\begin{proof}
The second equality trivially follows from 
Proposition~\ref{prop:MainProblem}.(\ref{satequal})
because $g \in S[g]$.
Let us prove $\wSat{g}(S)=\Sat{g}(S[g])$
using induction on $\deg(f(z))$.
If $\deg(f(z))=1$, \ie~$f(z) = c_1z+c_0$,
  clearly $g = (c_1)^{-1}\cdot(f(g)-c_0)\in S$,
hence the claim 
follows from Proposition~\ref{prop:MainProblem}.(\ref{satequal}).
Then assume that if a $K$-subalgebra $A$
of~$P$ contains $f(g)$ with $\deg(f(z))<d$ then 
$\wSat{g}(A)=\Sat{g}(A[g])$.

Now, we let $f(g) \in S$ with $\deg(f(z)) = d$, 
\ie~$f(z) =\sum_{i=0}^dc_iz^i$ with $c_d\ne0$,
and we let $\tilde{f}(z) = \sum_{i=1}^dc_iz^{i-1}$,
thus
$g\cdot\tilde{f}(g) = f(g){-}c_0 \in S$,
in other words, $\tilde{f}(g)\in S:g^\infty$.
Then, by Proposition~\ref{prop:MainProblem}.(a)
it follows that $\tilde{f}(g) \in \wSat{g}(S)$, therefore
 we define $A = S[\tilde{f}(g)]$
and we have $S\subseteq A \subseteq \wSat{g}(S)$.
Consequently, by Proposition~\ref{prop:MainProblem}.(c),
$$\wSat{g}(A) = \wSat{g}(S) \eqno{(*)}$$
On the other hand, from $\deg(\tilde{f}(z)) = d-1$ and the inductive assumption 
we get the equalities 
$\wSat{g}(A) = \Sat{g}(A[g])$. 
From the obvious equality
$A[g] =S[\tilde{f}(g)][g] = S[g]$ we get 
$\wSat{g}(A) = \Sat{g}(S[g])$ which, combined with $(*)$, concludes the proof.
\end{proof}


\section{The General Case}
\label{sec:The General Case}

In this  section we tackle Problem~\ref{problem2}. 
We start with the following theorem which shows how to add new 
generators to a subalgebra of~$P$ in order to get closer to its saturation.
The theorem uses generators of $\Relg(g_1, \dots, g_r)$ (see~Definition~\ref{def:IdealOfRelations})
which  can be effectively computed according to Proposition~\ref{prop:subalgebraRepr}.


\subsection{Ideal of Relations and Subalgebra Membership}
\label{sec:elim}

The following $K$-algebra homomorphisms will be used systematically throughout
the paper.
Let $S = K[g_1, \dots, g_r] \subseteq P=K[a_0,\dots, a_n]$ and
let $g \in P$.
We will use
the homomorphism
 \define{$\ev$}: $K[x_1,\dots,x_r] \longrightarrow P$,
defined by $\ev(x_i) = g_i$,
and  the canonical homomorphism
 \define{$\pi_g$}: $P \longrightarrow P/\ideal{g}$.
The fundamental notion of an ideal of relations is recalled.

\begin{definition}\label{def:IdealOfRelations}
The kernel of the composition $\pi_g\!\circ \ev$
is called the \define{ideal of relations of $g_1,\dots, g_r$  modulo $g$} and 
is denoted by~\define{$\Relg(g_1,\dots, g_r)$}.
\end{definition}

In the following proposition we show how to
 compute $\Rel_g(g_1, \dots, g_r)$ using an elimination ideal.
We recall the following
propositions
using new indeterminates $y_1,\dots,y_m$ to emphasize that they are 
quite general.

\begin{proposition}[\define{Computing $\Relg$}]
\label{prop:subalgebraRepr}\ \\
Let  $g,\, g_1, \dots, g_r \in K[y_1, \dots, y_m]$.
Then let $Q=K[x_1, \dots, x_r, y_1, \dots, y_m]$, 
and define the ideal
 $J = \ideal{g, \, x_1-g_1, \dots, x_r-g_r}\subseteq Q$.
\begin{enumerate}
\item We have the equality 
$\Rel_g(g_1,\dots,g_r) = J\cap K[x_1, \dots, x_r]$.

\item Let~$G$ be the reduced $\sigma$-Gr\"obner basis of $J$
where $\sigma$ is an elimination  ordering for $\{y_1, \dots, y_m\}$.
Then we have $\Rel_g(g_1,\dots,g_r) = \ideal{G\cap K[x_1,\dots, x_r]}$.
\end{enumerate}
\end{proposition}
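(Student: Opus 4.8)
The plan is to derive both claims from a single membership description of $\Rel_g(g_1,\dots,g_r)$ and then to reduce the Gr\"obner basis statement to a standard elimination result. First I would unwind Definition~\ref{def:IdealOfRelations}: since $\ev$ sends $p(x_1,\dots,x_r)$ to $p(g_1,\dots,g_r)$ and $\pi_g$ is reduction modulo $\ideal{g}$, a polynomial $p\in K[x_1,\dots,x_r]$ lies in $\Rel_g(g_1,\dots,g_r)$ if and only if $p(g_1,\dots,g_r)\in\ideal{g}$, where $\ideal{g}$ denotes the ideal generated by $g$ in $K[y_1,\dots,y_m]$. This equivalence is the only place where the definition enters, and I would use it for both inclusions of claim~(a).

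For claim~(a) I would prove the two inclusions separately, each via the substitution $x_i\mapsto g_i$. To show $\Rel_g(g_1,\dots,g_r)\subseteq J\cap K[x_1,\dots,x_r]$, take $p$ with $p(g_1,\dots,g_r)=hg$ for some $h\in K[y_1,\dots,y_m]$; working modulo $J$ one has $x_i\equiv g_i$, so $p(x_1,\dots,x_r)\equiv p(g_1,\dots,g_r)=hg\equiv 0$ because $g\in J$, whence $p\in J$, and $p\in K[x_1,\dots,x_r]$ by hypothesis. For the reverse inclusion, write $p\in J\cap K[x_1,\dots,x_r]$ as $p=ag+\sum_{i=1}^r b_i(x_i-g_i)$ with $a,b_i\in Q$, and apply the $K$-algebra homomorphism $\phi\colon Q\to K[y_1,\dots,y_m]$ that fixes the $y_j$ and sends $x_i\mapsto g_i$. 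Then $\phi$ annihilates each $x_i-g_i$ and fixes $g$, so $\phi(p)=\phi(a)\,g\in\ideal{g}$; since $p$ involves no $y_j$ we also have $\phi(p)=p(g_1,\dots,g_r)$, and the membership description gives $p\in\Rel_g(g_1,\dots,g_r)$.

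For claim~(b) I would invoke the Elimination Theorem for Gr\"obner bases (as in~\cite{KR1}): because $\sigma$ is an elimination ordering for $\{y_1,\dots,y_m\}$, the intersection $G\cap K[x_1,\dots,x_r]$ of the reduced $\sigma$-Gr\"obner basis $G$ of $J$ with $K[x_1,\dots,x_r]$ is a Gr\"obner basis, hence in particular a generating set, of the elimination ideal $J\cap K[x_1,\dots,x_r]$. Combining this with claim~(a) immediately yields $\Rel_g(g_1,\dots,g_r)=\ideal{G\cap K[x_1,\dots,x_r]}$.

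I do not expect a genuine obstacle here, as the statement is a routine application of well-known elimination techniques. The only points that require care are verifying that $\phi$ is a well-defined $K$-algebra homomorphism and that $\phi(p)=p(g_1,\dots,g_r)$ whenever $p$ depends only on the $x_i$, together with the correct use of the elimination property of $\sigma$ in claim~(b).
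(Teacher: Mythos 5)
Your proof is correct. The paper itself gives no argument for this proposition: it simply cites \cite[Proposition~3.6.2]{KR1}, so there is nothing to compare against beyond noting that your self-contained argument --- the membership criterion $p\in\Rel_g(g_1,\dots,g_r)\iff p(g_1,\dots,g_r)\in\ideal{g}$, the two inclusions via the substitution homomorphism $x_i\mapsto g_i$, and the Elimination Theorem for part~(b) --- is exactly the standard proof that the cited reference supplies.
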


\begin{proof}
See~\cite[Proposition 3.6.2]{KR1}.
\end{proof}

We will also need to test subalgebra membership.  A method for
checking it is recalled here.

\begin{proposition}[\define{Subalgebra Membership Test}]\
\label{prop:subalgebraMember}\\
Let  $g_1, \dots, g_r \in K[y_1, \dots, y_m]$.
Then let $Q=K[x_1, \dots, x_r, y_1, \dots, y_m]$, 
and define the ideal
 $J = \ideal{x_1-g_1, \dots, x_r-g_r}\subseteq Q$.
Finally, let $S = K[g_1,\dots, g_r] \subseteq K[y_1, \dots, y_m]$.

Then a polynomial $f \in K[y_1, \dots, y_m]$ is such that $f \in S$ if
and only if 
we have $\NF_{\sigma, J}(f) \in K[x_1,\dots, x_r]$
where $\sigma$ is an elimination ordering for $\{y_1,\dots, y_m\}$. 
In this case, if we 
let  $h=\NF_{\sigma, J}(f)$, then $ f = h(g_1,\dots, g_r)$ is an explicit 
representation of $f$ as an element of~$S$.
\end{proposition}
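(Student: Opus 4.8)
The plan is to reduce the subalgebra membership question to an ideal membership/normal form computation in the larger ring $Q$, exploiting the fact that the ideal $J = \ideal{x_1-g_1, \dots, x_r-g_r}$ sets up a dictionary between the indeterminates $x_i$ and the generators $g_i$. The key structural observation is that for any polynomial $p \in K[x_1,\dots,x_r]$, we have $p - p(g_1,\dots,g_r) \in J$; indeed, substituting $x_i = (x_i - g_i) + g_i$ and expanding shows the difference lies in the ideal generated by the $x_i - g_i$. Equivalently, modulo $J$ every element of $Q$ is congruent to a polynomial obtained by replacing each $y_j$ appropriately, and a pure $x$-polynomial $p$ is congruent to its evaluation $p(g_1,\dots,g_r) \in K[y_1,\dots,y_m]$.

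First I would establish the forward direction. Suppose $f = h(g_1,\dots,g_r)$ for some $h \in K[x_1,\dots,x_r]$. Then $f - h \in J$ by the observation above, so $f \equiv h \pmod{J}$. Since $\sigma$ is an elimination ordering for $\{y_1,\dots,y_m\}$, the leading terms of the Gr\"obner basis elements that involve the $y_j$ are used to rewrite any occurrence of a $y$-variable; because $h \in K[x_1,\dots,x_r]$ is already free of the $y_j$, I would argue that the normal form of $f$ coincides with the normal form of $h$, and the latter stays inside $K[x_1,\dots,x_r]$. The cleanest way to phrase this is: $\NF_{\sigma,J}(f) = \NF_{\sigma,J}(h)$ since $f \equiv h$, and $\NF_{\sigma,J}(h) \in K[x_1,\dots,x_r]$ because reduction of a polynomial already free of the eliminated variables cannot introduce them. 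Hence $\NF_{\sigma,J}(f) \in K[x_1,\dots,x_r]$.

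For the converse, suppose $h := \NF_{\sigma,J}(f) \in K[x_1,\dots,x_r]$. By definition of normal form, $f \equiv h \pmod{J}$, so $f - h \in J$. Applying the evaluation homomorphism $\ev$ (extended to $Q$ by sending $y_j \mapsto y_j$ and $x_i \mapsto g_i$) annihilates every generator $x_i - g_i$ of $J$, hence kills $J$ entirely; therefore $\ev(f - h) = 0$. But $f$ involves only the $y_j$, so $\ev(f) = f$, while $\ev(h) = h(g_1,\dots,g_r)$. This yields $f = h(g_1,\dots,g_r)$, which simultaneously certifies $f \in S$ and provides the claimed explicit representation.

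I expect the main obstacle to be the precise justification, in the forward direction, that the normal form of a polynomial lying in $K[x_1,\dots,x_r]$ remains in $K[x_1,\dots,x_r]$ and that $\NF_{\sigma,J}(f)$ agrees with it. This hinges on the elimination property of $\sigma$: one must verify that reductions performed by Gr\"obner basis elements whose leading terms are pure $x$-terms keep us inside $K[x_1,\dots,x_r]$, and that no reduction step can reintroduce an eliminated $y_j$ into a polynomial that started without them. Rather than reprove these standard facts about elimination orderings and normal forms, I would cite the relevant results on elimination (e.g.\ from~\cite{KR1}) and on the well-definedness and $K$-linearity of $\NF_{\sigma,J}$, using that two congruent polynomials share the same normal form. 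The rest of the argument is then the short evaluation computation above.
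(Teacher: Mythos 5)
Your argument is correct. The paper itself does not prove this statement; it simply cites \cite[Corollary 3.6.7]{KR1}, and your proposal is essentially the standard textbook proof behind that reference: the congruence $p \equiv p(g_1,\dots,g_r) \pmod{J}$ for $p \in K[x_1,\dots,x_r]$, the evaluation homomorphism killing $J$ for the converse, and for the forward direction the observation that under an elimination ordering for $\{y_1,\dots,y_m\}$ any \groebner basis element whose leading term is free of the $y_j$ lies entirely in $K[x_1,\dots,x_r]$, so reduction of a pure $x$-polynomial never reintroduces the $y_j$. The one step you flag as delicate is exactly the right one, and your justification of it (via the elimination property and well-definedness of normal forms on residue classes) is sound.
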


\begin{proof}
See~\cite[Corollary 3.6.7]{KR1}.
\end{proof}

We are ready to prove the first theorem of this paper.

\begin{theorem}\label{thm:main}
Let $g_1,\dots, g_r \in P = K[a_0,a_1,\dots,a_n]$, 
let $S = K[g_1, \dots, g_r]$,
and let $\g\in S{\setminus}\{0\}$.
Then  let $\{H_1, \dots, H_t\}$ be a set of gen\-erators of the ideal 
$\Relg(g_1, \dots, g_r)$, 
and finally let ${\tilde{h}_{i}}^{\mathstrut} \!=\! H_i(g_1, \dots, g_r)/\g$
and $h_{i}\!=\! \sat{H_i(g_1, \dots, g_r)}{\g}$ for $i= 1, \dots, t$. 
We have
$$ S \subseteq K[\col{S}{\g}] =
K[g_1, \dots, g_r, \tilde{h}_{1}, \dots, {\tilde{h}_{t}}^{\mathstrut}]
\subseteq  K[g_1, \dots, g_r, h_{1}, \dots, h_{t}] \subseteq \sat{S}{\g}$$
\end{theorem}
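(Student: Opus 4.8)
The plan is to verify the four-term chain link by link, since each link is either an equality or a containment of $K$-subalgebras that can be checked on generators: the sets $K[\dots]$ are subalgebras by definition, and $\sat{S}{\g}$ is a $K$-subalgebra by Proposition~\ref{prop:MainProblem}.(\ref{satequal}) because $\g\in S$. Throughout I will use that $P$ is an integral domain, so that dividing by the nonzero element $\g$ is unambiguous. Two elementary observations drive the bookkeeping: first, $S\subseteq\col{S}{\g}$, because for $s\in S$ we have $\g s\in S$ ($S$ being a subalgebra containing $\g$); second, each $H_i\in\Relg(g_1,\dots,g_r)$ means, by Definition~\ref{def:IdealOfRelations}, that $H_i(g_1,\dots,g_r)\in\ideal{\g}$, i.e.\ $\g\mid H_i(g_1,\dots,g_r)$, so that both $\tilde h_i$ and $h_i$ genuinely lie in $P$. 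The leftmost inclusion $S\subseteq K[\col{S}{\g}]$ follows at once from the first observation.

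I expect the central equality $K[\col{S}{\g}]=K[g_1,\dots,g_r,\tilde h_1,\dots,\tilde h_t]$ to be the main obstacle, so I would treat it carefully and in both directions. For the easy containment ``$\supseteq$'', each $g_i\in\col{S}{\g}$ by the first observation, and each $\tilde h_i=H_i(g_1,\dots,g_r)/\g$ satisfies $\g\tilde h_i=H_i(g_1,\dots,g_r)\in S$ with $\tilde h_i\in P$, so $\tilde h_i\in\col{S}{\g}$ as well; hence the generators of the right-hand algebra lie in $\col{S}{\g}\subseteq K[\col{S}{\g}]$. The substantive direction is ``$\subseteq$'', and it is enough to show $\col{S}{\g}\subseteq K[g_1,\dots,g_r,\tilde h_1,\dots,\tilde h_t]$ and then apply $K[-]$. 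Here I would take $f\in\col{S}{\g}$, so $\g f\in S$ and thus $\g f=F(g_1,\dots,g_r)$ for some $F\in K[x_1,\dots,x_r]$. Since $\g\mid F(g_1,\dots,g_r)$, the defining property of $\Relg$ gives $F\in\Relg(g_1,\dots,g_r)$, and writing $F=\sum_i P_i H_i$ with $P_i\in K[x_1,\dots,x_r]$, evaluating at the $g_j$ and cancelling the common factor $\g$ in the domain $P$ yields $f=\sum_i P_i(g_1,\dots,g_r)\,\tilde h_i$. As each $P_i(g_1,\dots,g_r)\in S$, this exhibits $f$ as an element of $K[g_1,\dots,g_r,\tilde h_1,\dots,\tilde h_t]$, closing the equality.

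The remaining two links are routine divisibility bookkeeping once the relation-ideal argument above is in place. For the third inclusion, write $h_i=H_i(g_1,\dots,g_r)/\g^{e_i}$ where $\g^{e_i}$ is the highest power of $\g$ dividing $H_i(g_1,\dots,g_r)$, with $e_i\ge 1$ by the second observation; then $\tilde h_i=\g^{\,e_i-1}h_i$, and since $\g\in S$ both $\g$ and $h_i$ belong to $K[g_1,\dots,g_r,h_1,\dots,h_t]$, so $\tilde h_i$ does too. For the final inclusion, each $g_i\in S\subseteq\sat{S}{\g}$, and each $h_i$ satisfies $\g^{e_i}h_i=H_i(g_1,\dots,g_r)\in S$, so $h_i\in\col{S}{\g^{e_i}}\subseteq\sat{S}{\g}$; because $\sat{S}{\g}$ is a $K$-subalgebra, it contains the whole generated algebra $K[g_1,\dots,g_r,h_1,\dots,h_t]$. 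Assembling the four links gives the asserted chain.
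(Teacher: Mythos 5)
Your proposal is correct and follows essentially the same route as the paper's proof: the left inclusion from $\g\in S$, the central equality via writing $\g f=F(g_1,\dots,g_r)$ with $F\in\Relg(g_1,\dots,g_r)=\sum B_jH_j$ and cancelling $\g$ in the domain $P$, and the last two links from $\g\in S$ and the fact that $\sat{S}{\g}$ is a $K$-algebra. Your treatment of the third link via $\tilde h_i=\g^{\,e_i-1}h_i$ just makes explicit what the paper leaves as a one-line remark.
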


\begin{proof} 
The first inclusion follows from Remark~\ref{rem:moresaturation}.

Now we prove the inclusion
$K[\col{S}{\g}] \subseteq 
K[g_1, \dots, g_r, \tilde{h}_{1}, \dots, {\tilde{h}_{t}}^{\mathstrut}]$.
If $f \in \col{S}{\g}$ we deduce that $\g f \in
K[g_1, \dots, g_r]$, hence 
there  is a  polynomial 
$F \in K[x_1, \dots, x_r]$ such that we have $\g f= F(g_1, \dots, g_r)$.
This means that ${F(x_1, \dots, x_r) \in  \Relg(g_1, \dots, g_r)}$,
thus~$F$ may be written as $\sum_{j=1}^tB_j H_j$ which implies
$F(g_1, \dots, g_r)\!= \sum_{j=1}^tB_j(g_1, \dots, g_r) H_j(g_1, \dots, g_r)\!=
\sum_{j=1}^tB_j(g_1, \dots, g_r) \g\tilde{h}_{j} $,
and hence we have 
$\g f \!=\! F(g_1, \dots, g_r)\!=\!\sum_{j=1}^tB_j(g_1, \dots, g_r) \g\tilde{h}_{j} $.
From this relation we deduce  the equality
$f =  \sum_{j=1}^tB_j(g_1, \dots, g_r) \tilde{h}_{j}$.

Next we prove that
$K[g_1, \dots, g_r, \tilde{h}_{1}, \dots,
{\tilde{h}_{t}}^{\mathstrut}] \subseteq K[\col{S}{\g}]$.
Firstly, $\g\, g_i \in S$ for every $i =1,\dots, r$ since $\g \in S$.
Moreover, we have   $\g\tilde{h}_{i} = H_i(g_1,\dots, g_r) \in S$,  hence 
also $\tilde{h}_{i}  \in \col{S}{\g}$ for $i = 1, \dots t$.
Thus the inclusion is proved which concludes the proof of 
the equality  $ K[\col{S}{\g}] =
K[g_1, \dots, g_r, \tilde{h}_{1}, \dots, \tilde{h}_{t}]$.

The inclusion $K[g_1, \dots, g_r, \tilde{h}_{1}, \dots,
{\tilde{h}_{t}}^{\mathstrut}] \subseteq K[g_1, \dots, g_r, h_{1},
\dots, h_{t}]$ follows again from the assumption that $\g \in S$, and
the last inclusion of the claim is clear since $\sat{S}{\g}$ is a
$K$-algebra by Proposition~\ref{prop:MainProblem}.(b).
\end{proof}

The following  example shows that if $\g\notin S$ then 
$S \subseteq K[\col{S}{\g}]$ may not hold, and $K[\col{S}{\g}]$ may not be a 
finitely generated $K$-algebra.

\begin{example}\label{ex:a0notinS}
Let $P = K[a_0,a_1]$, and let $\g=a_0$.
\begin{enumerate}
\item Let $S=K[a_1]$. Then $\col{S}{a_0} = \{0\}$, hence $K[\col{S}{a_0}] = K$.

\item Let $S \!= K[a_0a_1]$. 
Then $K[\col{S}{a_0}] = K[a_1,a_0a_1^2,a_0^2a_1^3,\dots, a_0^ia_1^{i+1},\dots]
\subsetneq K[a_1,a_0a_1]$,
and $S\not \subset K[\col{S}{a_0}]$ since $a_0a_1 \notin K[\col{S}{a_0}]$.
\end{enumerate}

\end{example}

A straightforward consequence of Theorem~\ref{thm:main}  is 
an interesting independence of the set of generators of 
the ideal $\Relg(g_1, \dots, g_r)$.

\begin{corollary}\label{rem:notdepend}
With the same assumptions of the theorem,
let $\{H_1',\! \dots,\! H_u'\}$ 
be another set of generators of 
$\Relg(g_1,\! \dots, \! g_r)$, and 
let ${\widetilde{h'}_{i}}^{\mathstrut}= H_i'(g_1,\! \dots\! g_r)/a_0$  for $i=1,\dots, u$.
Then we have 
$K[g_1, \dots, g_r, \tilde{h}_{1}, \dots, \tilde{h}_{t}]  = 
K[g_1, \dots, g_r, {\widetilde{h'}_{1}}^{\mathstrut}, \dots,  \widetilde{h'}_{u}] $.
\end{corollary}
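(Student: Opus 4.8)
The plan is to apply Theorem~\ref{thm:main} twice, once to each of the two generating sets, and to observe that both computations produce the same intrinsically defined algebra. The crucial point is contained in the middle equality of Theorem~\ref{thm:main}, namely $K[\col{S}{\g}] = K[g_1, \dots, g_r, \tilde{h}_{1}, \dots, \tilde{h}_{t}]$. This equality exhibits the algebra generated by the $\tilde{h}_{i}$ as equal to $K[\col{S}{\g}]$, and the set $\col{S}{\g}$ depends only on $S$ and $\g$, not on any choice of generators $H_1, \dots, H_t$ of the relation ideal $\Relg(g_1, \dots, g_r)$. Thus all the work has already been front-loaded into the theorem.

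First I would apply Theorem~\ref{thm:main} to the generating set $\{H_1, \dots, H_t\}$, obtaining $K[g_1, \dots, g_r, \tilde{h}_{1}, \dots, \tilde{h}_{t}] = K[\col{S}{\g}]$. Then I would apply the very same theorem, verbatim, to the second generating set $\{H_1', \dots, H_u'\}$: since ${\widetilde{h'}_{i}}^{\mathstrut} = H_i'(g_1, \dots, g_r)/\g$ plays exactly the role that $\tilde{h}_{i}$ plays for the first set, the theorem yields $K[g_1, \dots, g_r, {\widetilde{h'}_{1}}^{\mathstrut}, \dots, \widetilde{h'}_{u}] = K[\col{S}{\g}]$. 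Chaining the two identities through their common value $K[\col{S}{\g}]$ gives precisely the asserted equality
$$K[g_1, \dots, g_r, \tilde{h}_{1}, \dots, \tilde{h}_{t}] = K[g_1, \dots, g_r, {\widetilde{h'}_{1}}^{\mathstrut}, \dots, \widetilde{h'}_{u}].$$

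I expect essentially no obstacle here, which is why the statement is phrased as a corollary: the proof of Theorem~\ref{thm:main} never used any special feature of the generators beyond the fact that they generate $\Relg(g_1, \dots, g_r)$, so re-running it with a different generating set is legitimate. The only hypotheses that must be checked before each invocation are that the relevant family is again a set of generators of the \emph{same} ideal $\Relg(g_1, \dots, g_r)$ and that $\g \in S \setminus \{0\}$ is unchanged; both hold by assumption. Hence the independence of the generating algebra from the chosen presentation of the relation ideal follows immediately.
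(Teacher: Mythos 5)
Your proof is correct and is exactly the paper's argument: both generated algebras coincide with the intrinsically defined $K[\col{S}{\g}]$ by the middle equality of Theorem~\ref{thm:main}, applied once to each generating set of $\Relg(g_1,\dots,g_r)$.
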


\begin{proof}
  The claim follows immediately from the theorem since both algebras
  are equal to the $K$-algebra $K[\col{S}{\g}]$.
\end{proof}

This independence does not hold if we substitute $\tilde{h}_{i}$ with $h_{i}$ 
for $i=1,\dots,t$, and likewise  $\widetilde{h'}_{i}$ with 
$h'_{i}$ for $i = 1,\dots, u$, as the following example shows.
Please note that in all examples using $\g=a_0$ we identify 
$K[a_0,a_1, \dots, a_n]/\ideal{a_0}$ with $K[a_1, \dots, a_n]$.

\begin{example}\label{ex:depend}
Let $P = \QQ[a_0,a_1,a_2]$, let $g_1 = a_1^2-a_0^2a_2$, $g_2 = a_1a_2-a_0$,
$g_3 = a_2^2$, $g_4 = a_1a_2^2$, $\g=a_0$,
and $S = \QQ[g_1,g_2,g_3,g_4,\g]$.
Then we have $\pig(g_1) = a_1^2$, $\pig(g_2) = a_1a_2$, $\pig(g_3)= a_2^2$, 
$\pig(g_4) = a_1a_2^2$. 

If we let $I = \Relg(g_1, \dots, g_4,\g) \subseteq
\QQ[x_1,x_2,x_3,x_4,x_5]$ 
we get 
$ I= \ideal{H_1, H_2, H_3}$ 
where $H_1 = x_2^2 -x_1x_3$, $H_2 = x_1x_3^2 -x_4^2$,
$H_3 = x_5$.
Then it is also true  that $I = \ideal{H_1,H_2',H_3}$ where 
$H_2' = H_2 +x_3H_1 = x_2^2x_3 -x_4^2$.

Let $\tilde{h}_{i}= H_i(g_1,g_2,g_3,g_4,\g)/\g$ and let 
$h_{i} = \sat{H_i(g_1,g_2,g_3,g_4,\g)}{\g}$ 
for $i = 1,2,3$. 
Similarly,
let ${\widetilde{h'}_2}^{\mathstrut}= H_2'(g_1,g_2,g_3,g_4,\g)/\g$, and
$h_2' = \sat{H_2'(g_1,g_2,g_3,g_4,\g)}{\g}$.
We have 
$$\begin{array}{ll}
\tilde{h}_{1}= H_1(g_1,g_2,g_3,g_4,\g)/\g = -2a_1a_2 +a_0(a_2^3+1) \ &
h_{1} = \sat{H_1(g_1,g_2,g_3,g_4,\g)}{\g} = \tilde{h}_{1} \\
\tilde{h}_{2}= H_2(g_1,g_2,g_3,g_4,\g)/{\g} = -a_0a_2^5&
h_{2} = \sat{H_2(g_1,g_2,g_3,g_4,\g)}{\g} \!=\!-a_2^5
\\
{\widetilde{h'}_2}^{\mathstrut}= H_2'(g_1,g_2,g_3,g_4,\g)/{\g} =-2a_1a_2^3 +a_0a_2^2 &
h_2' = \sat{H_2'(g_1,g_2,g_3,g_4,\g)}{\g} = {\widetilde{h'}_2}^{\mathstrut}
\\
\tilde{h}_{3}= H_3(g_1,g_2,g_3,g_4,\g)/\g = \g/\g = 1 \quad&
h_{3} = \sat{H_3(g_1,g_2,g_3,g_4,\g)}{\g} = 1 \\
\end{array}
$$

According to Corollary~\ref{rem:notdepend} and these equalities we have
$$\QQ[g_1,g_2,g_3,g_4, \g, h_1, h_2']  =
\QQ[g_1,g_2,g_3,g_4, \g, h_1,  \tilde{h}_2] 
\eqno{(*)}
$$
Let us check it  using Proposition~\ref{prop:subalgebraMember}.
On the polynomial ring $\QQ[x_1,\dots, x_7, a_0,a_1,a_2]$ we introduce a term 
ordering $\sigma$ of elimination for $\{a_0, a_1, a_2\}$, and we let
$$
\begin{array}{l}
J_1 = \ideal{\g,\ x_1-g_1,\, x_2-g_2, \ x_3-g_3,\ x_4-g_4,\  x_5-\g,\  x_6-h_1,\ x_7 - h_2'}\\
J_2 = \ideal{\g,\ x_1-g_1,\, x_2-g_2, \ x_3-g_3,\ x_4-g_4,\  x_5-\g,\  x_6-h_1,\ x_7 -\tilde{h}_2}
\end{array}
$$
We get $\NF_{\sigma,J_1}(\tilde{h}_2) =  -x_3x_6+x_7$ which means that 
$\tilde{h}_2 = -g_3h_1+h_2'$ and hence we deduce that 
${\tilde{h}_2}^{\mathstrut} \in \QQ[g_1,g_2,g_3,g_4,\g, h_1, h_2']$.
We get $\NF_{\sigma,J_2}(h_2') = x_3x_6+x_7$ which means that 
$h_2' = g_3h_1+{\tilde{h}_2}^{\mathstrut}$ and hence we deduce that 
$h_2' \in \QQ[a_0,g_1,g_2,g_3,g_4, h_1,  \tilde{h}_2]$.

Finally, we claim that 
$$\QQ[g_1,g_2,g_3,g_4,\g, h_1, h_2']  \subsetneq
\QQ[g_1,g_2,g_3,g_4,\g, h_1, h_2] 
$$
The inclusion $\QQ[g_1,g_2,g_3,g_4,\g, h_1, h_2'] \subseteq
\QQ[g_1,g_2,g_3,g_4,\g, h_1, h_2] 
$ follows from $(*)$  since clearly 
$\QQ[g_1,g_2,g_3,g_4,\g, h_1,  {\tilde{h}_2}^{\mathstrut}]  
\subseteq \QQ[g_1,g_2,g_3,g_4,\g, h_1, h_2] $.
Finally, to check the claim we  show that 
$h_2 \notin \QQ[g_1,g_2,g_3,g_4,\g, h_1, {h_2'}^{\mathstrut}] $. 
To do this we compute 
$\NF_{\sigma,J_1}(h_2) = -x_3^2a_2$, and the conclusion follows from
Proposition~\ref{prop:subalgebraMember}.
\end{example}

\subsection{The general Algorithm}
Theorem~\ref{thm:main} motivates the following definition.


\begin{definition}\label{def:aplus}
Given a subalgebra $S = K[g_1, \dots, g_r]$ of~$P$,
and $\g\in S{\setminus}\{0\}$,
we denote by \define{$E_\g(S)$}
the algebra $K[g_1, \dots, g_r, h_1, \dots, h_t]$ as described in 
Theorem~\ref{thm:main}.
Then we  let \define{$E_\g^0(S)$} $= S$,
and recursively  \define{$E_g^i(S)$}= \define{$E_\g(E_\g^{i-1}(S))$} for $i>0$.
\end{definition}

\begin{remark}\label{rem:abuse}
We observe that there is an abuse of notation since $E_\g(S)$
depends on the set of generators of~$S$, as shown in
Example~\ref{ex:depend}.
Moreover, we notice that the last inclusion of Theorem~\ref{thm:main}
can be read as  $E_\g(S)\subseteq \sat{S}{\g}$.
\end{remark}

We are ready to prove some fundamental results for our algorithm.

\begin{theorem}\label{thm:inclusion}
Let $P = K[a_0,a_1,\dots,a_n]$, let $g_1,\dots, g_r \in P$,
let $S = K[g_1, \dots, g_r]$, and $\g\in S{\setminus}\{0\}$.
Then let~$A$ be a finitely generated $K$-subalgebra of~$P$ such that
${S \subseteq A \subseteq \sat{S}{\g}}\!$.

\begin{enumerate}
\item We have $K[\col{S}{\g^i} ]\subseteq E_\g^i(S)\subseteq \sat{S}{\g}$ 
for every $i\ge0$.

\item We have $S = E_\g^0(S)\subseteq E_\g^1(S)\subseteq \cdots \subseteq E_\g^i(S) 
\subseteq \sat{S}{\g}$ for every $i\ge 0$.

\item We have $\sat{A}{\g} = \sat{S}{\g}$.

\item If  $A = E_\g(A)$ then  $A = \sat{S}{\g}$.
\end{enumerate}
\end{theorem}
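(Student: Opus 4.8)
The plan is to prove the four claims in the logical order (c), then (a) and (b) together by induction on $i$, and finally (d), since the upper bounds appearing in (a) and (b) are most cleanly obtained once (c) is available. Throughout I rely on the fact that $\g\in S$, so that $\g$ lies in every intermediate algebra considered and all the relevant saturations are genuine subalgebras.

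Claim (c) is self-contained and rests entirely on Proposition~\ref{prop:MainProblem}. Since $\g\in S\subseteq A$ we have $\g\in A$, so parts (a) and (d) of that proposition give $\sat{A}{\g}=\wSat{g}(A)$ and $\sat{S}{\g}=\wSat{g}(S)$. The hypothesis then reads $S\subseteq A\subseteq \wSat{g}(S)$, so Proposition~\ref{prop:MainProblem}.(c) yields $\wSat{g}(A)=\wSat{g}(S)$, and chaining the three equalities gives $\sat{A}{\g}=\sat{S}{\g}$.

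For (a) and (b) I would run a single induction on $i$ carrying the two statements $E_\g^i(S)\subseteq\sat{S}{\g}$ and $K[\col{S}{\g^i}]\subseteq E_\g^i(S)$. The base case $i=0$ is immediate from $\col{S}{\g^0}=S$ and $S\subseteq\sat{S}{\g}$. In the step, the upper bound is easy: $E_\g^i(S)$ is finitely generated, contains $S$, and lies in $\sat{S}{\g}$ by the inductive hypothesis, so claim (c) applied to $A=E_\g^i(S)$ gives $\sat{E_\g^i(S)}{\g}=\sat{S}{\g}$; combined with Remark~\ref{rem:abuse} (which reads $E_\g(B)\subseteq\sat{B}{\g}$) this gives $E_\g^{i+1}(S)\subseteq\sat{S}{\g}$. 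The ascending chain asserted in (b) follows at once from the first inclusion $B\subseteq E_\g(B)$ of Theorem~\ref{thm:main}. The heart of the matter is the lower bound $K[\col{S}{\g^{i+1}}]\subseteq E_\g^{i+1}(S)$, for which I would first establish the set inclusion $\col{S}{\g^{i+1}}\subseteq\col{(E_\g^i(S))}{\g}$: if $\g^{i+1}f\in S$ then $\g^i(\g f)\in S$, so $\g f\in\col{S}{\g^i}\subseteq K[\col{S}{\g^i}]\subseteq E_\g^i(S)$ by the inductive hypothesis, i.e.\ $f\in\col{(E_\g^i(S))}{\g}$. Passing to generated subalgebras and applying the inclusion $K[\col{B}{\g}]\subseteq E_\g(B)$ of Theorem~\ref{thm:main} with $B=E_\g^i(S)$ closes the induction. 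This colon-shifting step is the only place where real content enters, so I expect it to be the main obstacle; the delicate point is verifying that the inductive hypothesis is exactly strong enough to replace $\col{S}{\g^i}$ by $E_\g^i(S)$ and thereby turn a condition on $\g^{i+1}$ into one on $\g$.

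Finally, for (d) I assume $A=E_\g(A)$. Theorem~\ref{thm:main} applied to $A$ gives $K[\col{A}{\g}]\subseteq E_\g(A)=A$, while $\g\in A$ forces $A\subseteq\col{A}{\g}$ by Remark~\ref{rem:moresaturation}; together these yield $\col{A}{\g}=A$. A short induction then shows $\col{A}{\g^i}=A$ for all $i$ (if $\g^{i+1}f\in A$ then $\g f\in\col{A}{\g^i}=A$, hence $f\in\col{A}{\g}=A$), so that $\sat{A}{\g}=\bigcup_{i}\col{A}{\g^i}=A$. Invoking claim (c) one last time gives $A=\sat{A}{\g}=\sat{S}{\g}$, which is the assertion.
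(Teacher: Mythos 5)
Your proof is correct and follows essentially the same route as the paper's: claim (c) via Proposition~\ref{prop:MainProblem}, the colon-shifting induction ($\g^{i+1}f\in S\Rightarrow \g f\in\col{S}{\g^{i}}\subseteq E_\g^{i}(S)$) for the lower bound in (a), and $K[\col{A}{\g}]\subseteq E_\g(A)$ for (d). The only differences are organizational: you invoke claim (c) to get the upper bound $E_\g^{i}(S)\subseteq\sat{S}{\g}$ and to finish (d) via $\sat{A}{\g}=A$, where the paper instead unwinds the saturation exponent directly; both are valid.
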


\begin{proof} For claim (a) we have to prove two inclusions. 
For the first inclusion it suffices to show 
$\col{S}{\g^i} \subseteq E_\g^i(S)$ for $i>0$.
From Theorem~\ref{thm:main}
and Remark~\ref{rem:abuse} we  get 
$\col{S}{\g} \subseteq E_\g^1(S)$. 
By induction we may assume that 
$\col{S}{\g^{i-1}} \subseteq E_\g^{i-1}(S)$ and
let $f \in P$ be such that $\g^if \in S$. 
Then $\g f \in \col{S}{\g^{i-1}} \subseteq E_\g^{i-1}(S)$ by induction, and
hence $f \in \col{E_\g^{i-1}(S)}{\g} \subseteq E_\g^i(S)$ 
by Theorem~\ref{thm:main} applied to the subalgebra $E_\g^{i-1}(S)$.

The second inclusion of claim (a) is true for $i=0$. By induction we may
assume that $E_\g^{i-1}(S)\subseteq \sat{S}{\g}$. Let $f\in E_\g^i(S)$. 
Since $E_\g^i(S) = E_\g(E_\g^{i-1}(S))$ there exists s such that 
$\g^s f \in E_\g^{i-1}(S)\subseteq \sat{S}{\g}$. Consequently, there exists~$t$
such that $\g^{s+t} f \in S$, and hence we get  $f \in \sat{S}{\g}$.

Claim (b) follows from the definition of $E_\g^i(S)$ and claim (a).

 Claim (c) follows from Proposition~\ref{prop:MainProblem}.(c),(\ref{satequal}).

To prove claim (d) it suffices to show the inclusion $\sat{S}{\g} \subseteq A$. 
So let $ f \in \sat{S}{\g}$ which means that there exists $i \in \NN$ such that 
$\g^if\in S$. If $i =0$ we have $f \in S \subseteq A$. Using induction on $i$
we may assume that  $\g^{i-1}f \in S$ implies $f \in A$. From $\g^if \in S$
we deduce $\g^{i-1}(\g f) \in S$, hence by induction we get $\g f \in A$.
Consequently, we get $f \in \col{A}{\g} \subseteq E_\g(A)$ 
by Theorem~\ref{thm:main}.
By assumption we have $E_\g(A)= A$ hence $f \in A$ and    the proof is complete.
\end{proof}

From this theorem we deduce the following result.

\begin{corollary}\label{cor:sequence}
Let~$K$ be a field, let $P\!=\!K[a_0,a_1,\dots,a_n]$,
let $S{=}K[g_1, \dots, g_r]\subseteq P$, and let $\g\in S{\setminus}\{0\}$.
The following conditions are equivalent.
\begin{enumerate}
\item The algebra $\sat{S}{\g}$ is finitely generated.
\item There exists $i$ such that $E_\g^i(S) =E_\g^{i+1}(S)$.
\end{enumerate}
Moreover, if the two equivalent conditions are satisfied, 
then $\sat{S}{\g} = E_\g^i(S) $.
\end{corollary}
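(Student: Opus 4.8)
The plan is to exploit the ascending chain $S = E_\g^0(S)\subseteq E_\g^1(S)\subseteq\cdots$ from Theorem~\ref{thm:inclusion}.(b), together with the fact that this chain exhausts $\sat{S}{\g}$ in the limit. First I would record that each $E_\g^i(S)$ is a \emph{finitely generated} $K$-algebra: by the Hilbert Basis Theorem the ideal $\Relg$ of a finite generating set is finitely generated, so one application of $E_\g$ adjoins only finitely many new generators $h_1,\dots,h_t$; starting from the finitely generated $S$ and iterating $i$ times preserves finite generation. This observation is what ties the two conditions to genuine finiteness.

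The crucial preliminary step is to prove that $\sat{S}{\g} = \bigcup_{i\ge0} E_\g^i(S)$. Here I would combine the chain $\col{S}{\g^i}\subseteq K[\col{S}{\g^i}]\subseteq E_\g^i(S)\subseteq \sat{S}{\g}$, where the middle inclusion is Theorem~\ref{thm:inclusion}.(a) and the last one its companion inclusion, with the set-theoretic description $\sat{S}{\g}=\bigcup_{i\in\NN}\col{S}{\g^i}$ coming from Definition~\ref{def:ialgebrasatur} and Proposition~\ref{prop:MainProblem}.(\ref{satequal}) (recall $\g\in S$). Taking unions over $i$ yields $\sat{S}{\g}=\bigcup_i\col{S}{\g^i}\subseteq\bigcup_i E_\g^i(S)\subseteq\sat{S}{\g}$, so every inclusion is an equality; in particular $\bigcup_i E_\g^i(S)=\sat{S}{\g}$.

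For (b) $\Rightarrow$ (a) and the final assertion, suppose $E_\g^i(S)=E_\g^{i+1}(S)$ for some $i$. Setting $A=E_\g^i(S)$ gives $A=E_\g^{i+1}(S)=E_\g(A)$, so Theorem~\ref{thm:inclusion}.(d) forces $A=\sat{S}{\g}$; since $A$ is finitely generated by the first step, $\sat{S}{\g}$ is finitely generated and equals $E_\g^i(S)$, which is the ``moreover'' clause. For (a) $\Rightarrow$ (b), write $\sat{S}{\g}=K[f_1,\dots,f_m]$; by the union formula each $f_j$ lies in some $E_\g^{i_j}(S)$, and since the chain ascends all of them lie in $E_\g^N(S)$ with $N=\max_j i_j$. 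Hence $\sat{S}{\g}\subseteq E_\g^N(S)\subseteq\sat{S}{\g}$ forces $E_\g^N(S)=\sat{S}{\g}$, and the squeeze $E_\g^N(S)\subseteq E_\g^{N+1}(S)\subseteq\sat{S}{\g}=E_\g^N(S)$ from Theorem~\ref{thm:inclusion}.(a),(b) gives $E_\g^N(S)=E_\g^{N+1}(S)$.

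The main obstacle is the union formula $\sat{S}{\g}=\bigcup_i E_\g^i(S)$; everything else is routine bookkeeping on ascending chains of finitely generated algebras. This identity is exactly where Theorem~\ref{thm:inclusion}.(a) does the real work, translating the set-theoretic saturation $\bigcup_i\col{S}{\g^i}$ into the algebraic chain $\bigcup_i E_\g^i(S)$.
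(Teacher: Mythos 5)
Your proof is correct and follows essentially the same route as the paper: both directions hinge on Theorem~\ref{thm:inclusion}.(a) (to trap a finite generating set of $\sat{S}{\g}$ inside some $E_\g^N(S)$ via the colon sets $\col{S}{\g^i}$) and on Theorem~\ref{thm:inclusion}.(d) (to conclude from $E_\g^i(S)=E_\g(E_\g^i(S))$ that the chain has reached the saturation). Your intermediate ``union formula'' $\sat{S}{\g}=\bigcup_i E_\g^i(S)$ is just a clean repackaging of the paper's direct use of $K[h_1,\dots,h_s]\subseteq K[\col{S}{\g^m}]\subseteq E_\g^m(S)$ with $m$ the maximum of the exponents needed for the generators.
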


\begin{proof}
To show the implication  $(a)\!\implies\!(b)$ we assume 
that $\sat{S}{\g}= K[h_1, \dots, h_s]$ 
and let $m = \max_{i=1}^s\{i \mid \g^ih_j \in S \ {\rm for}\  j = 1, \dots,s \}$.
We deduce that 
$K[h_1, \dots, h_s] \subseteq K[\col{S}{\g^m}]$.
By Theorem~\ref{thm:inclusion}.(a) we have
$ K[\col{S}{\g^m}] \subseteq E_\g^m(S)$, hence 
$$\sat{S}{\g}= K[h_1, \dots, h_s]\subseteq K[\col{S}{\g^m}]
\subseteq E_\g^m(S)\subseteq E_\g^{m+1}(S)\subseteq \sat{S}{\g}$$
which implies the equality $E_\g^m(S) = E_\g^{m+1}(S)$.

To show that $(b)\!\implies\!(a)$ it suffices to prove that $\sat{S}{\g} = E_\g^i(S)$ 
since $E_\g^i(S)$ is finitely generated by definition. 
We have the equality $E_\g^i(S) = E_\g(E_\g^i(S))$ by assumption, 
and hence $E_\g^i(S)  = \sat{S}{\g}$ by Theorem~\ref{thm:inclusion}.(d).
\end{proof}

We are ready to describe an algorithm to compute a set of generators for
 $\sat{S}{\g}$, if it is finitely generated.
If it is not, this procedure does not terminate, producing an infinite
sequence of subalgebras ever closer to $\sat{S}{\g}$.
Instances of   $\sat{S}{\g}$ being not finitely generated are 
Example~\ref{ex:nonsatreduce} and Example~\ref{ex:notfinite}.
A similar algorithm/procedure is contained 
in~\cite[Semi-algorithm 4.10.16]{DK2015}. 
In our case we can claim that it is an algorithm since we assume that $S:\g$
is finitely generated.

\MyAlgorithm{SubalgebraSaturation}

\begin{description}[topsep=0pt,parsep=1pt] 
 \item[\textit{notation:}]  $P =K[a_0,\dots,a_n]$ is a polynomial ring.
 \item[Input]  
$S= K[g_1,\dots,g_r]\subseteq P$, 
and $\g\in S{\setminus}\{0\}$
such that $\sat{S}{\g}$ is
finitely generated.
     \item[1] Let $S' = S$
 \item[2] \textit{Main Loop:}
   \begin{description}[topsep=0pt,parsep=1pt]
   \item[2.1] call $g'_1,\dots,g'_{s}$ the non-constant generators of $S'$
   \item[2.2] compute $\{H_1, \dots, H_{t}\}$,
     a set of generators of the ideal
     $\Relg(g'_1,\dots,g'_{s})$
   \item[2.3] for $j=1,\dots,t$, let $h_j = \sat{H_j(g'_1, \dots, g'_{s})}{\g}$

   \item[2.4] if $h_1,\dots,h_{t}\in S'$, i.e.\ if $E_\g(S') = S'$, 
          then \define{return $S'$}
   \item[2.5] redefine $S'$ as $K[g'_1,\dots,g'_{s}, h_{1},\dots,h_{t}]$
   \end{description}
 \item[Output ] $\sat{S}{\g}$
\end{description}

\begin{proof}
Since $\sat{S}{\g}$ is finitely generated,  correctness and
termination follow immediately from Corollary~\ref{cor:sequence}.
\end{proof}

\begin{remark}\label{rem:gindet}
When $\g$ is indeed in the list~$G$ of the generators of the
subalgebra~$S$, say in position $i$, then $x_i$ is in $\Relg(G)$.
Then $h_i = 1$ which trivially belongs to~$S$.
In the following examples we will use this fact sistematically.
\end{remark}

The following example shows that the procedure may not terminate, 
and $\sat{S}{a_0}$ 
needs not be a finitely generated $K$-algebra.

\begin{example}\label{ex:nonsatreduce}
Let $P = \QQ[a_0,a_1,a_2]$, let 
$\g=a_0$, 
$g_2 = a_1-a_0a_1^2$, 
$g_3 = a_2$, 
$g_4 = a_1a_2$, and let $S = \QQ[\g, g_2, g_3, g_4]$.
Notice that $P/\ideal{\g} \simeq \QQ[a_1,a_2]$.
Then~we have $\pig(g_2) = a_1$, $\pig(g_3) = a_2$, $\pig(g_4)= a_1a_2$, 
hence  $\Relg(\g, g_2, g_3, g_4) = \ideal{x_1,\, x_4-x_2x_3}$.
First, from $H_1 = x_1$ we have $h_1 =1\in S$ (as shown in Remark~\ref{rem:gindet}).
Then, from $H_2= x_4-x_2x_3$ 
we have $g_4-g_2g_3 = a_0(a_1^2a_2)$ hence $h_2 = a_1^2a_2$.
 Therefore, after the first loop, 
$E_\g(S) = \QQ[a_0,\,a_1{-}a_0a_1^2,\, a_2,\, a_1a_2,\, {a_1^2a_2}^{\mathstrut}]$.
Inductively, we may assume that 
$$E_\g^i(S) = \QQ[a_0,\,
a_1{-}a_0a_1^2, \, a_2, \,a_1a_2, \,a_1^2a_2,\dots, {a_1^{i+1}a_2}^{\mathstrut}]$$
The only new relation in $\Relg(\g,g_2,g_3,g_4, a_1^2a_2,\dots, a_1^i a_2, a_1^{i+1}a_2)$
is $x_2x_{i+3} -x_{i+4}$ and after the loop we get $a_1^{i+2}a_2$.
The procedure does not stop, nevertheless we can conclude that 
$$
\sat{S}{a_0} = 
\QQ[a_0,\,a_1{-}a_0a_1^2,\, a_2,\, a_1a_2,\, a_1^2a_2,\dots, a_1^{i+1}a_2,\dots]
$$
hence it is not finitely generated.
\end{example}

Let us see an example where the procedure stops, hence it computes $\sat{S}{a_0}$.
\begin{example}\label{ex:terminates}
Let~$P =\QQ[a_0,a_1,a_2,a_3]$, let $\g=a_0$, $g_2 = a_1^2-a_0a_2$,
$g_3 =a_1^3-a_0a_3$, and let $S =\QQ[\g,g_2,g_3]$. 
We have $\pig(g_2) = a_1^2$
and $\pig(g_3) = a_1^3$, hence 
$\Relg(\g, g_2, g_3) = \ideal{x_1,\,x_3^2 -x_2^3}$.
We get 
$g_3^2-g_2^3 = 3a_0a_1^4a_2 -2a_0a_1^3a_3 -3a_0^2a_1^2a_2^2
+a_0^2a_3^2 +a_0^3a_2^3$,
hence 
$g_4 = a_1^4 a_2 -{\frac{2}{3} }^{\mathstrut} a_1^3 a_3 -a_0  a_1^2
a_2^2  +\frac{1}{3}  a_0  a_3^2  +\frac{1}{3}  a_0^2 a_2^3$, 
and hence we deduce $E_\g(S) = K[\g, g_2,g_3,g_4]$,
and indeed we can check that $g_4\not\in S$.
Moreover, we have $\pig(g_4) = a_1^4a_2 -{\frac{2}{3}}^{\mathstrut} a_1^3a_3$ and 
$\Relg(\g, g_2, g_3, g_4)=\ideal{x_2^2 -x_1^3}$, so no new generator 
is created in Step 2.2 and the procedure stops in Step 2.3. 
The conclusion is that ${\sat{S}{a_0} = K[\g, g_2, g_3, g_4]}$.

Moreover, from the computation we deduce the equality
$a_0g_4 = \frac{1}{3}(g_3^2-g_2^3)$ 
which gives an explicit proof of the fact that 
$g_4 \in \sat{S}{a_0}$.
\end{example}

Algorithm~\ref{alg:SubalgebraSaturation} comes as a direct application 
of the theory developed in Section~\ref{sec:The General Case}, 
in particular Corollary~\ref{cor:sequence}.  It is useful to improve it by using 
suitable rewriting procedures which we are going to describe in the next section.

\section{Subalgebra Reduction, Interreduction, and Sat-Interreduction}
\label{sec:Subalgebra-reduction}

We recall some definitions and facts from the theory of \sagbis.
For a general introduction to this topic see~\cite[Section 6.6]{KR2};
here, we reshape its Definition~6.6.16 and adapt it
for our purposes.

\begin{definition}\label{def:reduction-step}
Let $P =K[a_0,\dots,a_n]$ 
with term-ordering~$\sigma$.
Let $G= \{g_1, \dots, g_r\}$, where all $g_i$'s are monic 
polynomials in~$P$,
and let~$h$ be a non-zero polynomial in~$P$.
If $\LT_\sigma(h)\in K[\LT_\sigma(g_1),\dots, \LT_\sigma(g_r)]$,
and we have 
$\LT_\sigma(h) = \LT_\sigma(g_1)^{\alpha_1}\cdots
\LT_\sigma(g_r)^{\alpha_r}$,
then
we let $h' =h - \LC_\sigma(h)\cdot g_1^{\alpha_1}\cdots g_r^{\alpha_r}$ and
we say that the passage from~$h$ to~$h'$ is  an
\define{$\mathcal{S}_{\LT}$-reduction step} for~$h$.
\end{definition}

The following is a running example for this section.

\begin{example}\label{ex:sagbirem-step}
Let $P =\QQ[a_0,a_1,{a_2}_{\mathstrut}]$, with $\TT(a_0,a_1,a_2)$ ordered by
$\sigma$, the term-ordering defined by  the matrix
 $\left(
\begin{smallmatrix}
\  \ 1&\ \  1&\ \  1 \\
 -1&\ \  0& \ \ 0 \\ 
\ \ 0&-1&\ \  0
\end{smallmatrix}
\right)
$.
 Then, let 
$h= a_1a_2^6 -4a_0^5a_1a_2 +4a_0^5a_1^2 +a_0^6a_2 +a_0^7$,
and
$$
g_1=a_0, \quad
g_2=a_1a_2 -a_1^2, \quad
g_3= a_2^2, \quad
g_4= a_1a_2^2
$$
We observe that all polynomials are monic and we have 
$$
\LTs(g_1)= a_0, \quad 
\LTs(g_2)= a_1a_2, \quad 
\LTs(g_3)= a_2^2,  \quad
\LTs(g_4)= a_1a_2^2, \quad
\LTs(h)= a_1a_2^6,
$$
We observe that
  $\LTs(h) = \LTs(g_3)^2 \LTs(g_4)$.
Hence  we have an $\mathcal{S}_{\LT}$-reduction step 
$h' =h - {g_3^2}^{\mathstrut} g_4 = -4a_0^5a_1a_2 +4a_0^5a_1^2 +a_0^6a_2 +a_0^7$.
\end{example}

Note that an $\mathcal{S}_{\LT}$-reduction step replaces $\LT_\sigma(h)$ with
$\sigma$-smaller terms.
Therefore, being $\sigma$ a term ordering, a chain of LT-reduction
steps must end in a finite number of steps.
This motivates the following definitions.

\begin{definition}\label{def:Subalgebra-reduction}
Let $P =K[y_1,\dots,y_m]$, and let $\sigma$  be a term ordering on
$\TT(y_1,\dots, y_m)$.
Then let $G= \{g_1, \dots, g_r\}$, where all $g_i$'s are monic 
polynomials in~$P$ and let $0\ne h \in P$.
\begin{enumerate}
\item 
We say that $h'$ is an \define{\SLTrem} for~$h$ and 
 denote it by $\saremLT(h,G)$, if there is 
a chain of $\mathcal{S}_{\LT}$-reduction steps from~$h$ to $h'$,
and $\LT_\sigma(h') \not\in K[\LT_\sigma(g_1),\dots, \LT_\sigma(g_r)]$.

\item
Let $h' =\saremLT(h,G)$.  We may compute 
$\LM_\sigma(h')-\saremLT(h'-\LM_\sigma(h'), G)$,
and repeat this process until we obtain a polynomial $h''$ such that no
power-product in its support is in $K[\LT_\sigma(g_1),\dots, \LT_\sigma(g_r)]$.
We say that $h''$ is an \define{\Srem} for~$h$
 and denote it by $\sarem(h,G)$.
\end{enumerate}
\end{definition}

\begin{remark}\label{rem:different remainders}
Notice that, according to  Definition~\ref{def:reduction-step}, we may get
different $\saremLT(h,G)$ and $\sarem(h,G)$, depending
on the way of representing 
$\LT_\sigma(h) = 
\LT_\sigma(g_1)^{\alpha_1}\cdots \LT_\sigma(g_r)^{\alpha_r}$. 
\end{remark}

This definition is a natural generalization of the remainder of the
division algorithm in the context of polynomial ideal, 
but the difficult step here is to find 
the $\alpha_i$ giving the equality 
$\LT_\sigma(h) = 
\LT_\sigma(g_1)^{\alpha_1}\cdots \LT_\sigma(g_r)^{\alpha_r}$.

There are two strategies for doing this: 
elimination and toric ideals.

\begin{remark}
As described in Proposition~\ref{prop:subalgebraMember}
we may determine an explicit 
representation of $\LT_\sigma(h)$ as an element
of~$K[\LT_\sigma(g_1), \dots, \LT_\sigma(g_r)]$ by
defining the ideal
 $J = \ideal{x_1-\LT_\sigma(g_1), \dots, x_r-\LT_\sigma(g_r)}$
in $K[x_1, \dots, x_r, a_0, \dots, a_n]$
 and then computing
$\tau=\NF_{\sigma, J}(\LT(h))$.
It is easy to show that $\tau$ is a power-product, and,
if $\tau = x_1^{\alpha_1}\cdots x_r^{\alpha_r}$
we have the desired exponents.
 Otherwise, if some $a_i$ occurs in
$\tau$, we may conclude that $\LT_\sigma(h) \not\in K[\LT_\sigma(g_1), \dots, \LT_\sigma(g_r)]$,
\ie~there is no $\mathcal{S}_{\LT}$-reduction step for $h$.
\end{remark}

The other strategy is by looking for a binomial $x_0-\tau$ in the
kernel of the map 
$\phi: K[x_0,x_1,\dots,x_r] \longrightarrow P$, defined by $\phi(x_0)
= \LT_\sigma(h)$ and $\phi(x_i) = \LT_\sigma(g_i)$.

Again this may be computed by mimicking
Proposition~\ref{prop:subalgebraRepr}, but much more efficiently by
computing the ${\rm toric\  ideal\  } \Rel(\LT_\sigma(h), \,\LT_\sigma(g_1),\,
\dots,\,\LT_\sigma(g_r))$ as described for instance 
in~\cite{BLR}, and using the following proposition.

\begin{proposition}\label{prop:toric}
Let $t_0,t_1,\dots, t_r$ be power-products in $P$
and $\phi\!:\! K[x_0,x_1,\!\dots,x_r] \longrightarrow P$ be the $K$-algebra 
homomorphism defined by $\phi(x_i)=t_i$ for $i=0,\dots, r$.
Then the following conditions are equivalent.
\begin{enumerate}
\item We have $t_0 \in K[t_1, \dots, t_r]$.

\item
 There exists a binomial $b\in \ker(\phi)$
such that $x_0 \in \Supp(b)$.

\item Given a finite set $B$ of binomial generators of $\ker(\phi)$,
there exists a binomial $b\in B$ such that $x_0 \in \Supp(b)$.
In this case, if
$b= \pm(x_0 - \tau)$, then $t_0 = \tau(t_1,\dots, t_r)$.
\end{enumerate}
\end{proposition}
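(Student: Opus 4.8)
The plan is to prove the chain of equivalences $(a)\Leftrightarrow(b)\Leftrightarrow(c)$ for Proposition~\ref{prop:toric}, where the key structural fact is that $\ker(\phi)$ is a \emph{toric ideal}, hence generated by \emph{binomials} of the form $x^{\alpha}-x^{\beta}$ with disjoint supports. First I would establish the implication $(b)\implies(a)$, which is the easy direction: if $b\in\ker(\phi)$ is a binomial with $x_0\in\Supp(b)$, then after possibly clearing a common factor one may write $b=\pm(x_0^e\,x^{\gamma}-x^{\delta})$ with $x_0\notin\Supp(x^{\delta})$; applying $\phi$ gives $t_0^e\,t^{\gamma}=t^{\delta}$. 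I would argue that because the $t_i$ are power-products in the polynomial ring $P$ (hence the monoid they generate is cancellative and the variables of $P$ are nonzerodivisors), we can solve for $t_0$ and conclude $t_0\in K[t_1,\dots,t_r]$. Stating and using the precise binomial form here is what makes the argument clean.

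For the reverse direction $(a)\implies(b)$, the plan is to start from a witnessing expression $t_0=\tau(t_1,\dots,t_r)$ for some power-product $\tau=x_1^{\alpha_1}\cdots x_r^{\alpha_r}$ — note that if $t_0\in K[t_1,\dots,t_r]$ then since $t_0$ is a single power-product, its preimage must itself be (up to scalar) a single power-product, a fact I would justify by comparing monomials on both sides. Then $b=x_0-\tau$ lies in $\ker(\phi)$ by construction and visibly has $x_0\in\Supp(b)$, giving $(b)$. The implication $(c)\implies(b)$ is trivial, since a binomial from the generating set $B$ is in particular a binomial in $\ker(\phi)$.

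The remaining and most delicate implication is $(b)\implies(c)$: from the mere \emph{existence} of some binomial in $\ker(\phi)$ involving $x_0$, I must extract that \emph{every} binomial generating set $B$ already contains such a binomial. Here the plan is to use the Abelian-group / lattice structure of the toric ideal. I would argue contrapositively: suppose no element of $B$ involves $x_0$; then $B\subseteq\ker(\phi)\cap K[x_1,\dots,x_r]$, so the ideal $\ideal{B}$ is generated by polynomials not involving $x_0$. Since $\ideal{B}=\ker(\phi)$, this would force $\ker(\phi)$ to be extended from $K[x_1,\dots,x_r]$, which contradicts the presence of a binomial genuinely involving $x_0$ (one can detect this via an elimination/leading-term argument, or by noting that membership in an extended ideal cannot produce the cancellation of $x_0$ needed to lie in $\ker(\phi)$ while keeping $x_0$ in the support). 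Finally, for the explicit reconstruction in $(c)$, once $b=\pm(x_0-\tau)\in B$, applying $\phi$ yields $t_0=\tau(t_1,\dots,t_r)$ directly. I expect this last step $(b)\implies(c)$ to be the main obstacle, precisely because it upgrades an existence statement about \emph{some} binomial to a statement about \emph{any chosen} generating set, and making the ``extension'' argument fully rigorous requires care with the toric/lattice-ideal structure rather than a one-line computation.
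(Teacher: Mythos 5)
The central problem is your reading of ``$x_0\in\Supp(b)$''. In this paper (following Kreuzer--Robbiano), $\Supp(b)$ is the set of \emph{power products} occurring in $b$ with nonzero coefficient, so $x_0\in\Supp(b)$ means that the term $x_0$ itself is one of the two monomials of $b$, i.e.\ $b=\pm(x_0-\tau)$ with $\tau$ a power product in $x_1,\dots,x_r$ --- exactly what the closing sentence of (c) presupposes. You instead read it as ``the indeterminate $x_0$ occurs in $b$'', allowing $b=\pm(x_0^e x^\gamma-x^\delta)$. Under that reading the statement is simply false, and your proof of $(b)\implies(a)$ cannot be repaired: take $t_0=a_1$ and $t_1=a_1^2$; then $x_0^2-x_1\in\ker(\phi)$ is a binomial in which $x_0$ occurs, yet $a_1\notin K[a_1^2]$. (Similarly $t_0=a_1$, $t_1=a_1a_2$, $t_2=a_2$ with $x_0x_2-x_1$.) Cancellativity lets you cancel common factors in $t_0^e t^\gamma=t^\delta$, but the resulting quotient need not lie in the \emph{monoid} generated by $t_1,\dots,t_r$, so ``solving for $t_0$'' does not yield $t_0\in K[t_1,\dots,t_r]$. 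Under the correct reading, $(b)\implies(a)$ really is one line: apply $\phi$ to $x_0-\tau$. Your $(a)\implies(b)$ via multi-homogeneity and your $(c)\implies(b)$ are fine and agree with the paper.

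The same misreading undermines $(b)\implies(c)$, which you correctly identify as the delicate step. What must be shown is that \emph{any} binomial generating set $B$ contains an element of the exact shape $\pm(x_0-\tau)$. Your contrapositive (``if no generator involves $x_0$ then $\ker(\phi)$ is extended from $K[x_1,\dots,x_r]$'') can at best show that some generator involves the \emph{indeterminate} $x_0$; it cannot exclude that all such generators look like $x_0^2-x^\beta$ or $x_0x_2-x_1$, in which case (c) would fail. (The intermediate step is also shaky as stated: an ideal extended from $K[x_1,\dots,x_r]$ does contain binomials in which $x_0$ occurs, e.g.\ $x_0x_1-x_0x_2\in\langle x_1-x_2\rangle$; what it cannot contain is a binomial with the term $x_0$ in its support.) The paper pins down the exact shape by a sharper device: starting from (a) one has $x_0-\prod_i x_i^{\alpha_i}\in\ker(\phi)$; writing this as $\sum_i f_ib_i$ with $b_i\in B$ and substituting $x_1=\cdots=x_r=0$, the left side becomes $x_0$, so some $b_i$ must survive the substitution with a pure power of $x_0$ of degree at most one among its terms, i.e.\ $b_i$ is of type $\pm(x_0-\prod x_i^{\beta_i})$ or $\pm(1-\prod x_i^{\beta_i})$; the latter is excluded by the homogeneity of the toric ideal. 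This substitution-and-degree argument is the missing idea in your outline.
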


\begin{proof}
The implications $(c)\implies (b)$ and $(b)\implies (a)$ are clear. 

Let us prove $(a)\implies (c)$.
The multi-homogeneity of $K[t_1, \dots, t_r]$ implies that the only way to have 
$t_0 \in K[t_1, \dots, t_r]$ is to have an equality of 
type $t_0=\prod_{i=1}^rt_i^{\alpha_i}$. 
This implies~(b), \ie~$x_0-\prod_{i=1}^rx_i^{\alpha_i} \in \ker(\phi)$.
Given $B = \{b_1, \dots, b_t\}$, we get 
$x_0-\prod_{i=1}^rx_i^{\alpha_i} = \sum_{i=1}^t f_ib_i$ 
with $f_i \in K[x_0,\dots, x_r]$.
By putting $x_1=\cdots =x_r=0$ in this relation  
we see that one of the $b_i$ has to be either of type 
$\pm(x_0-\prod_{i=1}^rx_i^{\beta_i})$, 
or of type $\pm(1-\prod_{i=1}^rx_i^{\beta_i})$.
The latter is excluded by the homogeneity 
of the generators of $\ker(\phi)$, therefore the proof is complete.
\end{proof}

\begin{remark}\label{rem:only submultiples}
There is an obvious but practically effective improvement of
Proposition~\ref{prop:toric}.
Asking whether $t_0 \in K[t_1, \dots, t_r]$
is equivalent to asking whether $t_0 \in K[T]$
where $T = \{t_i \mid t_i \text{ divides }t_0\}$.
\end{remark}

\begin{example}\label{ex:sagbirem}(Example~\ref{ex:sagbirem-step}, continued)\\
Let $P =\QQ[a_0,a_1,a_2]$, $\sigma$, $g_i$ and~$h$ as in
Example~\ref{ex:sagbirem-step}.
After the first $\cal{S}_{\LT}$-reduction step we got
$h' =h - g_3^2 g_4 = -4a_0^5a_1a_2 +4a_0^5a_1^2 +a_0^6a_2 +a_0^7$.

Now, $\LTs(h') = a_0^5a_1a_2 = \LTs(g_1)^5\LTs(g_2)$ gives us
a following $\cal{S}_{\LT}$-reduction step:
$h'' = h' +4 g_1^5g_2 = a_0^6a_2 +a_0^7$, whose leading term cannot
be further reduced.  Therefore $\saremLT(h, G) = h'' = a_0^6a_2 +a_0^7$.

Setting apart its leading monomial, we can now consider 
$\tilde h = h'' - \LM_\sigma(h'')$, 
and we see that  $\tilde h = \LTs(g_1)^7 = g_1^7$, therefore  
$\saremLT(\tilde h, G)= 0$.
In conclusion, $\sarem(h, G) = a_0^6a_2$.
\end{example}

\subsection{Interreduction and Sat-Interreduction}
\label{sec:Sat-Interreduction}

We recall from Definition~\ref{def:aplus} that $E_\g(A)$ is obtained by adding 
new generators to those of the algebra~$A$.
Now we investigate on how, using $\mathcal{S}$-remainders, we can
find a new set of generators for~$E_\g(A)$ or, even better, a 
set of polynomials generating an algebra~$B$ such that 
$E_\g(A) \subseteq B \subseteq \sat{A}{a_0}$

\begin{proposition}\label{prop:interreduction}
Let $P \!=K[a_0,a_1,\dots,a_n]$, with term ordering $\sigma$ on $\TT^{n+1}$,
and let $S\!=\! K[g_1, \dots, g_r]$, with $g_i\in P$, 
and $\g\in S{\setminus}\{0\}$.
Let $g'_{i} =  
\sarem(g_{i}, \{g_1, \dots, \hat{g_i}, \dots, g_r\})$,
 then 
$$S \;\subseteq \;K[g_1, \dots, \sat{g'_i}{\g}, \dots g_r]\; \subseteq\; \sat{S}{\g}$$
\end{proposition}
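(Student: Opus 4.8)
The plan is to prove the two inclusions separately, after first recording the one structural fact that drives everything: the reduction $g_i \rightsquigarrow g'_i$ changes $g_i$ only by an element of the subalgebra generated by the \emph{other} generators. Indeed, by Definition~\ref{def:reduction-step} every $\mathcal{S}_{\LT}$-reduction step in the computation of $g'_i = \sarem(g_i, \{g_1,\dots,\hat{g_i},\dots,g_r\})$ replaces the current polynomial $h$ by $h - \LC_\sigma(h)\,\prod_{j\neq i} g_j^{\alpha_j}$, and the $\mathcal{S}$-remainder merely iterates such steps on successive leading monomials. Hence $g_i - g'_i$ is a $K$-linear combination of products of the $g_j$ with $j\neq i$, so I would write $g_i = g'_i + p$ with $p \in K[\{g_j : j\neq i\}]$. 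In particular $g'_i = g_i - p \in S$, a fact I will use for both inclusions.

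For the right-hand inclusion I would argue as follows. Writing $\g^m$ for the highest power of $\g$ dividing $g'_i$, we have $g'_i = \g^m\cdot\sat{g'_i}{\g}$, whence $\g^m\cdot\sat{g'_i}{\g} = g'_i \in S$. By Definition~\ref{def:ialgebrasatur} this says $\sat{g'_i}{\g}\in\col{S}{\g^m}\subseteq\sat{S}{\g}$. The remaining generators $g_j$ (for $j\neq i$) lie in $S\subseteq\sat{S}{\g}$, and since $\g\in S$ the set $\sat{S}{\g}$ is a $K$-subalgebra, equal to $\Sat{\g}(S)$ by Proposition~\ref{prop:MainProblem}.(\ref{satequal}). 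Therefore the whole algebra $B:=K[g_1,\dots,\sat{g'_i}{\g},\dots,g_r]$ is contained in $\sat{S}{\g}$.

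For the left-hand inclusion $S\subseteq B$ it suffices to show $g_i\in B$, since every other generator of $S$ already appears among the generators of $B$. From the decomposition above, $g_i = \g^m\cdot\sat{g'_i}{\g} + p$, and both $\sat{g'_i}{\g}$ and $p$ belong to $B$ by construction; so the only missing ingredient is $\g^m\in B$, for which $\g\in B$ suffices. This is exactly the delicate point, and the main obstacle I anticipate: the passage $g'_i\mapsto\sat{g'_i}{\g}$ divides out powers of $\g$, so if the reduced-and-saturated generator were the sole source of $\g$ in the algebra, then $\g$ — and with it $g_i$ — could disappear from $B$. The inclusion is thus safe precisely when $\g$ survives in $B$, which holds automatically in the setting used throughout the paper, where $\g$ is one of the retained generators $g_j$ with $j\neq i$ (cf.\ Remark~\ref{rem:gindet}, in which $\g=a_0$ is always kept in the list). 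Under this hypothesis $\g\in B$, hence $\g^m\in B$ and $g_i = \g^m\cdot\sat{g'_i}{\g}+p\in B$, giving $S\subseteq B$ and completing the proof.
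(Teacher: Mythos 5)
Your argument follows the same route as the paper's, whose entire proof reads: from the definition of the \Srem it is clear that $S=K[g_1,\dots,g'_i,\dots,g_r]$, then ``the assumption $\g\in S$ implies'' the inclusion of this algebra in $K[g_1,\dots,\sat{g'_i}{\g},\dots,g_r]$, ``and the conclusion follows.'' Your structural fact ($g_i-g'_i\in K[\{g_j: j\neq i\}]$) and your treatment of the right-hand inclusion are exactly what the paper leaves implicit, and both are correct. The point you flag as delicate is, however, a genuine issue that the paper's one-line justification skates over: knowing only $\g\in S$ does not yield $\g\in B:=K[g_1,\dots,\sat{g'_i}{\g},\dots,g_r]$, because the representation of $\g$ in terms of the old generators may involve $g'_i$ itself. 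A two-generator example makes the failure concrete: take $S=K[a_0,a_1]$ with $g_1=a_0=\g$ and $g_2=a_1$; then no reduction of $g_1$ against $\{g_2\}$ is possible, so $g'_1=a_0$, $\sat{g'_1}{a_0}=1$, and $K[1,a_1]=K[a_1]$ does not contain $S$. So the left-hand inclusion really does require the hypothesis you impose --- that $\g$ lies in $K[\{g_j : j\neq i\}]$, for instance because $\g$ is one of the retained generators, as it always is in the paper's applications (cf.\ Remark~\ref{rem:gindet} and the sat-interreduction of Definition~\ref{def:gradedaplus}, where $a_0$ is kept and only the other generators are reduced and saturated). With that hypothesis made explicit, your proof is correct and is the argument the paper intends.
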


\begin{proof}
From the definition of \Srem it is clear that  $S= K[g_1, \dots, g_i', \dots g_r]$,
Then the assumption $\g\in S$ implies the inclusion
$K[g_1, \dots, g_i', \dots g_r]\subseteq K[g_1, \dots, \sat{g'_i}{\g}, \dots g_r]$,
and the conclusion follows.
\end{proof}

\begin{definition}\label{def:gradedaplus}
Let $P \!=K[a_0,a_1,\dots,a_n]$, with term ordering $\sigma$,
and $S= K[g_1, \dots, g_r]$, where all $g_i$'s are monic 
polynomials in~$P$.
Repeating the substitution described in
Proposition~\ref{prop:interreduction}
 until no more $\mathcal{S}$-reductions 
and saturations are possible, we obtain a set of  
sat-$\mathcal{S}$-interreduced generators of a 
$K$-algebra~$A$ such that 
$S \subseteq A \subseteq \sat{S}{a_0}$.

We denote such $A$ by \define{$\satinterr(S)$}, and
again, as in Definition~\ref{def:aplus}, there is an abuse of
notation since $\satinterr(S)$ 
depends on the set of generators of~$S$ and also on the steps of reduction.
\end{definition}

The following easy example illustrates this definition.

\begin{example}\label{ex:nonhomog}
Let $P =K[a_0,a_1,a_2]$ with~$\tt{DegRevLex}$.
Let 
$S= K[a_0,\; a_1,\;a_0a_2^2 {-}a_1]$,
then  $\satinterr(S) =K[a_0, \;a_1, \;a_2^2]=\sat{S}{a_0}$.
\end{example}

In general, 
the set of interreduced generators is obtained after 
more than one iteration through the generators.

\begin{example}\label{ex:sagbiremDegRev}(Examples~\ref{ex:sagbirem-step} and~\ref{ex:sagbirem} continued)\\
As in Example~\ref{ex:sagbirem-step}.
we let $P =\QQ[a_0,a_1,{a_2}_{\mathstrut}]$, with $\TT(a_0,a_1,a_2)$ ordered by
$\sigma$, the term-ordering defined by  the matrix
 $\left(
\begin{smallmatrix}
\  \ 1&\ \  1&\ \  1 \\
 -1&\ \  0& \ \ 0 \\ 
\ \ 0&-1&\ \  0
\end{smallmatrix}
\right)$. 
Let 
$g_1=a_0, \;
g_2=a_1a_2 -a_1^2, \;
g_3= a_2^2, \; \allowbreak
g_4= a_1a_2^2
$,
let $g_5=h= a_1a_2^6 -4a_0^5a_1a_2 +4a_0^5a_1^2 +a_0^6a_2 +a_0^7$,
and let $A = K[g_1,g_2,g_3,g_4,g_5]$.

Notice that 
$\LT(g_1)< \LT(g_2)< \LT(g_3)< \LT(g_4)< \LT(g_5)$,
thus $g_i$ may be reduced only by the $g_j$'s with $j<i$.
The only one which may be reduced is $g_5$,
and we computed $\sarem(h, G) = a_0^6a_2$
(see Example~\ref{ex:sagbirem}),
whose saturation is~$a_2$.

Then, we re-sort and re-number the $g_i$'s:
$$
g_1 = a_0,\quad
g_2 = a_2, \quad
g_3 = a_1a_2 -a_1^2, \quad
g_4 = a_2^2,\quad
g_5 = a_1a_2^2\ .
$$ 
Now, we see that $g_4$ can be $\cal{S}$-reduced to 0 using $g_2$,
and $g_5$ can be $\cal{S}$-reduced to $a_1^2a_2$ using $g_2g_3$.
Re-sorting and re-numbering again, we have
$$
g_1 = a_0,\quad
g_2 = a_2, \quad
g_3 = a_1a_2 -a_1^2, \quad
g_4 = a_1^2a_2 \ .
$$
In conclusion we have 
$
\satinterr(A) = k[a_0,\; a_2,\; a_1a_2-a_1^2,\; a_1^2a_2]
$.
Moreover, it is easy to see that $\satinterr(A)  = \sat{A}{a_0}$.
\end{example}

As said, the process of sat-interreducing the generators of a subalgebra of~$P$
can improve the subsequent steps of the computation of $\Relg$. But we cannot hope
that it substitutes such computation, as the following example  shows.

\begin{example}\label{ex:satinterrednotenough}
Let $P =\QQ[a_0,a_1,{a_2}_{\mathstrut}]$, with $\TT(a_0,a_1,a_2)$ ordered by
$\sigma$, the term-ordering defined by  the matrix
 $\left(
\begin{smallmatrix}
\begin{smallmatrix}
\  \ 1&\ \  1&\ \  1 \\
 -1&\ \  0& \ \ 0 \\ 
\ \ 0&\ \  0&  -1
\end{smallmatrix}
\end{smallmatrix}
\right)$. 
Let $G = \{\g, g_2, g_3, g_4\}$ where
$
\g=  a_0,                     \
g_2=  a_1a_2 -a_0a_1 +a_0a_2, \
g_3=  a_1^2 -a_2^2 +a_0a_1,    \
g_4=  a_1^3 -a_0a_2^2          
$,
and $S = K[G]$.

Then $\Relg(G)=\ideal{
x_1,\, x_2^6 +3x_2^2x_3x_4^2 +x_3^3x_4^2 -x_4^4
}$

The second relation, evaluated in~$G$, and $a_0$-saturated gives $h$,
and   $\sat{\sarem(h,G)}{a_0} = \sarem(h,G) = $
{\tiny
$
a_1^5 a_2^6  -6  a_1^4 a_2^7  -a_1^3 a_2^8  +40  a_0  a_1^4 a_2^6  -2  a_0  a_1^3 a_2^7  -45  a_0  a_1^2 a_2^8  -40  a_0  a_1  a_2^9  -10  a_0  a_2^{10}  +107  a_0^2 a_1^5 a_2^4  +484  a_0^2 a_1^4 a_2^5  +441  a_0^2 a_1^3 a_2^6  +38  a_0^2 a_1^2 a_2^7  -101  a_0^2 a_1  a_2^8  -40  a_0^2 a_2^9  -808  a_0^3 a_1^3 a_2^5  +615  a_0^3 a_1^2 a_2^6  +116  a_0^3 a_1  a_2^7  -39  a_0^3 a_2^8  -3798  a_0^4 a_1^4 a_2^3  +4935  a_0^4 a_1^3 a_2^4  -3846  a_0^4 a_1^2 a_2^5  +304  a_0^4 a_1  a_2^6  +82  a_0^4 a_2^7  +23372  a_0^5 a_1^2 a_2^4  -2720  a_0^5 a_1  a_2^5  +90  a_0^5 a_2^6  -50860  a_0^6 a_1^3 a_2^2  +5256  a_0^6 a_1^2 a_2^3  +30105  a_0^6 a_1  a_2^4  -166  a_0^6 a_2^5  +78690  a_0^7 a_1  a_2^3  +8438  a_0^7 a_2^4  -228828  a_0^8 a_1^2 a_2 +63304  a_0^8 a_1  a_2^2  +77232  a_0^8 a_2^3  +258692  a_0^9 a_2^2  -369708  a_0^{10} a_1 +228828  a_0^{10} a_2
$
}.

But $\sarem(h,G)$ is indeed in~$S$, being $\sarem(h,G) = $
{\tiny
$
228828 \g^9 g_{2} +85356 \g^7 g_{2}^2  +14530 \g^5 g_{2}^3  +1492 \g^3
g_{2}^4  -72 \g g_{2}^5  -140880 \g^9 g_3 -58116 \g^7 g_{2} g_{3}
-8051 \g^5 g_{2}^2 g_{3} +38 \g^3 g_{2}^3 g_{3} +151 \g g_{2}^4 g_{3}
-2592 \g^7 g_{3}^2  -1576 \g^5 g_{2} g_{3}^2  +1009 \g^3 g_{2}^2
g_{3}^2  -180 \g g_{2}^3 g_{3}^2  +529 \g^5 g_{3}^3  -492 \g^3 g_{2}
g_{3}^3  +99 \g g_{2}^2 g_{3}^3  +114 \g^3 g_{3}^4  -46 \g g_{2}
g_{3}^4  +11 \g g_{3}^5  -32456 \g^8 g_{4} -4586 \g^6 g_{2} g_{4}
-3263 \g^4 g_{2}^2 g_{4} +740 \g^2 g_{2}^3 g_{4} -1751 \g^6 g_{3}
g_{4} +2196 \g^4 g_{2} g_{3} g_{4} -654 \g^2 g_{2}^2 g_{3} g_{4} +18
g_{2}^3 g_{3} g_{4} -424 \g^4 g_{3}^2 g_{4} +232 \g^2 g_{2} g_{3}^2
g_{4} -3 g_{2}^2 g_{3}^2 g_{4} -50 \g^2 g_{3}^3 g_{4} +6 g_{2} g_{3}^3
g_{4} -g_{3}^4 g_{4} -25 \g^5 g_{4}^2  +64 \g^3 g_{2} g_{4}^2  -22 \g
g_{2}^2 g_{4}^2  -27 \g^3 g_{3} g_{4}^2  +22 \g g_{2} g_{3} g_{4}^2
-8 \g g_{3}^2 g_{4}^2  +14 \g^2 g_{4}^3  -6 g_{2} g_{4}^3  +g_{3} g_{4}^3 
$
}.

\end{example}

\section{The Graded Case: Introduction}
\label{sec:The Graded Case: Introduction}

As mentioned in the introduction, the problem of computing the 
saturation of a subalgebra $S$ of $P$ can benefit from the fact that 
$S$ is graded. In this section we prepare the ground for new results
related to our problem.

We introduce here the language of (multi or single) positive gradings.
All our results in this and the following section are valid for every positive grading and, in
particular, 
for every (single) grading defined by a
row-matrix of positive weights.

We recall that a grading on~$P$ defined by a weight matrix $W$
is called \define{positive} if no 
column of $W$ is zero and the first (from the top) non-zero element in each 
column is positive.
In this case, we shall also say that W is a positive matrix.
For more on positive gradings see~\cite[Chapter 4]{KR2}.
In this section we assume that~$P$ has a positive grading
such that our algebra~$S$ is generated by homogeneous elements.

From now on we consider $P = K[a_0,a_1, \dots, a_n]$
with a positive (multi)grading defined by~$W$,
and $S=K[g_1,\dots, g_r]$ a $W$-graded $K$-subalgebra of $P$,
generated by $W$-homogeneous $g_i$'s.
It is easy to prove that there exist positive row-matrices $W'$,
so that every $W$-homogeneous polynomial is also $W'$-homogeneous.
Therefore, alongside $W$ we can consider a (single)grading 
defined by such a $W'$
which gives positive integer degree to every non-constant polynomial in~$P$.
The following easy example illustrates this claim.

\begin{example}\label{ex:positive-grading}
Let $P=\QQ[a_0,a_1]$ be graded by 
$W = \left(\begin{smallmatrix} \ \ 1 &\ 0 \\  -7 &\    2 \end{smallmatrix}\right)$,
and let $W'=(1\ 2)$.  Notice that $W' = 8(1\ 0) + (-7\ 2)$,
thus every $W$-homogeneous element in $P$ is also $W'$-homogeneous.
\end{example}

Suppose that we have a positive grading on $P$ and that $S$ 
is a $K$-subalgebra of $P$ generated by homogeneous elements.
Are there advantages depending on this assumption?

\begin{remark}\label{rem:HomogeneousRels}
Let $S = K[g_1,\dots,g_r]$ subalgebra of $P$,
with $g_i$, non constant and homogeneous of degree $d_i$.
Consider on the ring $R = K[x_1,\dots,x_r]$ the grading defined by 
the matrix $W = (d_1 \;
\dots \; d_r)$.
Then, for every term ordering $\sigma$ on $P$, the toric  ideal
$\Rel(\LT_\sigma(g_1),\dots,\LT_\sigma(g_{s})) \subseteq R$ is
homogeneous,
and any homogeneous relation of degree $d$ in $R$ evaluated in 
$(g_1,\dots,g_r)$,
gives a homogeneous polynomial of degree $d$ in~$P$.
\end{remark}

In Example~\ref{ex:nonsatreduce} the polynomial $g_1=a_1-a_0a_1^2$
is not homogeneous with respect to any positive grading, and 
no term ordering~$\sigma$ is such that $\LTs(\pig(g_1)) = a_1$. 
Example~\ref{ex:nonsatreduce} was used to show that 
$\sat{S}{a_0}$ needs not to be finitely generated.
However, the following example shows that $\sat{S}{a_0}$ 
needs not be a finitely generated $K$-algebra even 
when~$S$ has a positive grading.
It is inspired by the similar Example 6.6.7 contained in~\cite[Section 6]{KR2}.

\begin{example}\label{ex:notfinite}
Let~$P=\QQ[a_0,a_1,a_2]$ be graded by 
$W = \left(\begin{smallmatrix} 
0& 1& 1\\ 
1& 1& 0  \end{smallmatrix} \right)$.

Let
$G = \{\g, g_2, g_3, g_4\}\subseteq P$ and $S = \QQ[G]$ where
$$\g=a_0,\quad g_2=a_1 {+} a_0a_2, \quad g_3= a_1a_2, \quad g_4=a_1a_2^2$$

From
 $\pig(g_2)=a_1$,\; $\pig(g_3)=g_3$,\; $\pig(g_4)=g_4$,\;
it follows that  $E_\g^0(S) = S$ and  
${\Relg}(G) =\Rel(\g,\; a_1,\; a_1a_2,\; a_1a_2^2)
=\ideal{x_1, \;x_2x_4-x_3^2}$.
The first generator gives $\g$, and the second gives
 $g_2g_4-g_3^2= a_0a_1a_2^3$ whose saturation is~$a_1a_2^3$.
No sat-reduction 
is possible
hence we obtain 
$G_1 = G \cup \{a_1a_2^3\}$,
and $E_\g^1(S)=\QQ[G_1]$.

By induction on $i$ we assume that 
$G_{i}= \{a_0, a_1, a_1a_2, \dots,  a_1a_2^{i+2}\}$, and
$E_\g^i(S)= \QQ[G_i]$.
We prove that 
$E_\g^{i+1}(S)= \QQ[a_0, a_1, a_1a_2, \dots,  a_1a_2^{i+3}]$.

Induced by $W$, we have a grading on $\QQ[x_1,x_2,\dots, x_{i+4}]$ given by 
$V = \left(  \begin{smallmatrix}
 0 & 1 & 2 & \cdots & i+3 \\
 1 & 1 & 1 & \cdots & 1 \end{smallmatrix} \right)$
so that  
${\Relg}(G_i)$ is $V$-homogenous, 
and we consider the term ordering $\sigma$ 
defined by a matrix whose first two lines are the lines of $V$, and
the third line is $(-1\  \ 0\  \cdots \ 0)$.
Then $\sigma$ is $\deg_V$-compatible.


It is well-known that toric ideals are generated by pure 
binomials (see~\cite{Stu96}).
We claim that the binomials in the 
reduced $\sigma$-Gr\"obner basis  $\mathcal{G}_\sigma$ of the ideal ${\Relg}(G_i)$ are 
quadratic, i.e.\ of type  $x_\alpha x_\beta - x_\gamma x_\delta$. 

To prove this claim we assume by contradiction that there 
is a pure binomial $b$ in $\mathcal{G}_\sigma$  of type 
$b=x_{\alpha_1}x_{\alpha_2}\cdots x_{\alpha_r} - x_{\beta_1}x_{\beta_2}\cdots x_{\beta_s}$
with $\alpha_1\le \alpha_2\cdots \le \alpha_r$ and $\beta_1\le \beta_2 \le \cdots \le \beta_s$ and $r>2$.
From the homogeneity with respect to the second line of~$V$ we deduce 
the equality $r=s$. 
As $b$ is $\sigma$-monic, 
$\LTs(b)= x_{\alpha_1}x_{\alpha_2}\cdots x_{\alpha_r}$,
hence
 $\alpha_1<\beta_1$.
Moreover, from the homogeneity
with respect to the 
first row of $V$
 we deduce 
 $\sum_{i=1}^r\alpha_i =\sum_{i=1}^r\beta_i $ hence
we get the inequality $\alpha_r\ge\alpha_1+2$. 
Now, $H=x_{\alpha_1}x_{\alpha_r} - x_{\alpha_1+1}x_{\alpha_r-1}\in\Relg(G_i)$,  
and, since $\LTs(H)=x_{\alpha_1}x_{\alpha_r}$ divides~$\LTs(b)$ properly, 
we have a contradiction with the assumption that  $b \in \mathcal{G}_\sigma$.
Therefore,
all  binomials in $\mathcal{G}_\sigma$ are quadratic.
 
 The next claim is that only  $a_1a_2^{i+3}$ is added to 
 $E_\g^{i}(S)$ 
via the  binomials in 
$\Relg(G_i)$.
The only quadratic binomials which produce non-zero polynomials
are those involving $x_2$, and those not in $\Relg(G_{i-1})$ 
must involve $x_{i+4}$.
Thus, using the same arguments as above, we  deduce that they are 
of type $x_2x_{i+4}-x_ax_b$ with $1<a\le b<i+4$. 
The corresponding evaluation gives $a_0a_ia_2^{i+3}$
which sat-reduces to $a_1a_2^{i+3}$, and hence we have
$E_\g^{i+1}(S)= \QQ[a_0,a_1+a_0a_2, a_1a_2, \dots, a_1a_2^{i+3}]$
as claimed. 

In conclusion, $\sat{S}{a_0}
= \QQ[a_0, \, a_1 {+} a_0a_2, \, a_1a_2^2,\dots, a_1a_2^i,\dots ]$
is not finitely generated.
\end{example}

The following is a well-known fact  which we recall here for the 
sake of completeness. It states that the degrees of a 
minimal system of homogeneous generators of  a graded $K$-subalgebra
$S$ of $P$  
is an invariant of $S$. For simplicity we state it here only in the special case where 
the grading is given by a positive row-matrix.

\begin{proposition}\label{prop:mingens-same-degrees}
Let $W$ be a positive row-matrix and let $S$ be a $W$-graded finitely 
generated $K$-subalgebra of $P$.
Then let $(g_1, \dots, g_r)$ be  a minimal system of homogeneous  
generators of~$S$ with $d_i=\deg_W(g_i)$ and  $d_1\le  \cdots \le d_r$. 
If $(h_1,\dots, h_s)$ is another minimal  system of homogeneous generators
of $S$ with $\delta_i = \deg(h_i)$ and $\delta_1\le \cdots \le \delta_s$, 
then $s = r$ and $d_i = \delta_i$ for $i = 1,\dots, r$.
\end{proposition}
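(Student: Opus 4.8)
The plan is to prove this standard invariance statement by a graded Nakayama-type argument, exploiting that $S$ is positively graded and hence decomposes as $S = \bigoplus_{d \ge 0} S_d$ with $S_0 = K$ (since $W$ is a positive row-matrix, each non-constant homogeneous element has strictly positive degree). The key object is the \emph{irrelevant ideal} $S_+ = \bigoplus_{d > 0} S_d$, the ideal of $S$ generated by all positive-degree homogeneous elements, together with the quotient $S_+ / S_+^2$, which is a graded $K$-vector space. The central claim I would establish is that a homogeneous generating set is minimal if and only if its images form a $K$-basis of $S_+/S_+^2$, and that consequently the number of generators in each degree is forced.

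First I would show that homogeneous elements $g_1, \dots, g_r$ of positive degree generate $S$ as a $K$-algebra if and only if their residues generate $S_+/S_+^2$ as a $K$-vector space. One direction is immediate. For the converse, given that the residues span $S_+/S_+^2$, I would argue by induction on degree $d$ that every element of $S_d$ lies in the subalgebra $K[g_1,\dots,g_r]$: any $f \in S_d$ can be written modulo $S_+^2$ as a $K$-combination of the $g_i$ of degree $d$, and the correction term lies in $(S_+^2)_d$, which by positivity is a sum of products of strictly-lower-degree homogeneous pieces, handled by the inductive hypothesis. This is the graded Nakayama lemma in disguise, and positivity of the grading is exactly what makes the induction bottom out.

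Next I would deduce minimality. A homogeneous generating set $(g_1,\dots,g_r)$ is minimal precisely when no $g_i$ is redundant, which by the previous paragraph translates to: the residues $\bar g_1, \dots, \bar g_r$ are $K$-linearly independent in $S_+/S_+^2$, i.e.\ they form a basis. Since $S_+/S_+^2$ is a graded vector space, its homogeneous component in each degree $d$ has a well-defined dimension $\dim_K (S_+/S_+^2)_d$, independent of any choice of basis. Therefore, for two minimal homogeneous generating sets $(g_1,\dots,g_r)$ and $(h_1,\dots,h_s)$, the number of generators of each degree $d$ equals $\dim_K (S_+/S_+^2)_d$ in both cases. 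Sorting the degrees as $d_1 \le \cdots \le d_r$ and $\delta_1 \le \cdots \le \delta_s$ then forces $r = s$ and $d_i = \delta_i$ for all $i$, which is exactly the assertion.

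The main obstacle is the precise handling of the decomposition $(S_+^2)_d$ in the inductive step: one must verify that every homogeneous element of $S_+^2$ of degree $d$ is genuinely a sum of products of homogeneous elements each of degree strictly less than $d$, so that the induction applies. This rests squarely on positivity (each factor has degree $\ge 1$, so in a product of at least two factors each factor has degree $\le d-1$), and I would state it carefully as the crux of the argument; once this is in place, the rest is formal. A minor point worth flagging is that one should confirm $S$ is finitely generated as hypothesized, so that minimal generating sets exist and are finite, but this is assumed in the statement.
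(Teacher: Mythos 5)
Your proof is correct and takes essentially the same approach as the paper's: both identify the number of degree-$d$ elements in any minimal homogeneous generating set with the dimension of an intrinsic quotient of $S_d$. Your quotient $(S_+/S_+^2)_d$ coincides with the paper's $S_d/(S_d \cap K[S_{<d}])$, because positivity of the grading gives $(S_+^2)_d = S_d \cap K[S_{<d}]$; your write-up merely supplies the graded Nakayama details that the paper's very terse proof leaves implicit.
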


\begin{proof}
It suffices to show that in each degree $d$ the number of 
elements of degree $d$ in any minimal system of generators 
of $S$ is an invariant. 
Let $K[S_{< d}]$ be the algebra generated by the elements of $S$ of degree less than $d$,
and let $V = S_d \cap K[S_{< d}]$.
It is a $K$-vector subspace of $S_d$ and
the number of minimal generators of degree $d$ 
is $\dim_K(S_d/V)$.
\end{proof}

\section{The Graded Case: Truncated \sagbi for Minimalization}
\label{sec:Minimalization}

Given homogeneous generators of a $K$-subalgebra $S$ of $P$, the next question is how to
find a minimal system of homogeneous generators of $S$.
The best tool for tackling this problem
is a truncated \sagbi of $S$. Let us see how.
As mentioned,
for a general introduction to this topic see~\cite[Section 6.6]{KR2}.  
In particular, 
consider~\cite[Tutorial 96]{KR2}.

\begin{remark}\label{rem:IncreasingDegreeSAGBI}
Recall Remark~\ref{rem:HomogeneousRels}.
Starting with homogeneous generators,
the computation of a \sagbi may proceed by increasing degrees:
after all relations and generators of degree~$\le d$
have been considered, the computation continues with relations and
polynomials of higher degrees.
Thus, the following generators and relations, have
degree~$>d$, and cannot affect, \ie~reduce, those,
previously considered, of degree~$\le d$.

One application of this approach is that 
one can determine whether an element of degree~$d$
is in $S$ by testing if it reduces to $0$ or not
with respect to a \define{$d$-truncated \sagbi} of $S$,
\ie~a \sagbi computed up to degree $d$.
\end{remark}

With these facts
we are ready to describe the algorithm for computing the
minimal generators.
This algorithm is basically the same as the general algorithm for computing
a \sagbi, except for the considerations on the degree.

\MyAlgorithm{SubalgebraMinGens}
 
\begin{description}[topsep=0pt,parsep=1pt] 
 \item[\textit{notation:}]  $P =K[a_0,\dots,a_n]$ is a polynomial
   ring graded by a positive row-matrix,\\
and let~$\sigma$ on $\TT^{n+1}$ be a degree-compatible term ordering.
 \item[Input]  
$S = K[g_1,\dots,g_r]\subseteq P$
with $g_1,\dots, g_r$ homogeneous.

\item[1] Initialise: 
Let $G = \{g_1,\dots,g_r\}$,
$D  = \max\{\deg(g)\mid g\in G\}$,
 SB = $\emptyset$, and MinGens = $\emptyset$.
 \item[2] \textit{Main Loop:} for $d$ 
   \;from $\min\{\deg(g)\mid g\in G\}$
   \; to  $D$ 
   \begin{description}[topsep=0pt,parsep=1pt]
   \item[2.1] foreach $g\in G$ of degree $d$
     \begin{description}[topsep=0pt,parsep=1pt]
     \item[2.1.1] Compute $h = \sarem(g,\; \text{SB})$
     \item[2.1.2] if $h \ne 0$ then \\
       redefine SB  as  SB $\cup\{h\}$\\
       redefine  MinGens as  MinGens $\cup\{h\}$
     \end{description}
   \item[2.2] if $d = D$ then \define{return MinGens}
   \item[2.3] compute $\{H_1, \dots, H_{t}\}$,
     the generators of degree $d+1$\\  of
     $\Rel(\LT_\sigma(g'_1),\dots,\LT_\sigma(g'_{s}))$, 
     where $g'_1,\dots,g'_s$ are the elements in SB
   \item[2.4] for $j=1,\dots,t$, compute
     $h_j = \sarem(H_j(g'_1, \dots,  g'_{s}),\; \text{SB})$
   \item[2.5] redefine SB as SB $\cup \{h_{1},\dots,h_{t}\}$
   \item[2.6] interreduce SB
   \end{description}
 \item[Output ] MinGens, a minimal system of generators of $S$.
\end{description}

\begin{proof}
Each iteration of the main loop has a fixed $d$
and  computes SB, a truncated \sagbi of
$K[G_{\le d}]$, where $G_{\le d} = \{ g_i \in G \mid \deg(g_i)\le
d\}$:
in Step 2.1 it is truncated to degree $d$,
 and in Steps 2.3-2.6 it is truncated to degree $d{+}1$, because it
 involves the relations up to degree $d{+}1$.
Having done that, in Step 2.1.1 of the next iteration,
we use SB to determine whether
each generator of degree $d{+}1$ is in $K[G_{\le d}]$, and also if
there is a, necessarily linear, relation with the previously added
generators of the same degree.

This procedure terminates because each iteration is finite, and there are
at most $D$ iterations.
\end{proof}

In the following example we see the algorithm at work.

\begin{example}\label{ex:satinterrednotenough-cont}
We reconsider Example~\ref{ex:satinterrednotenough}.
The algebra $S$ is standard graded and its $\sigma$-\satsagbi is
 $\{a_0,\, g_2,\, g_3,\, g_4, \, g_5\}$
where 
$g_5 = a_2^6  -8 \!\: a_0 \!\: a_1^3 a_2^2  -6 \!\: a_0 \!\: a_1^2 a_2^3  +3 \!\: a_0 \!\: a_1 \!\: a_2^4  +6 \!\: a_0^2 a_1 \!\: a_2^3  +4 \!\: a_0^2 a_2^4  -6 \!\: a_0^3 a_1^2 a_2 -12 \!\: a_0^3 a_1 \!\: a_2^2  +12 \!\: a_0^3 a_2^3  -a_0^4 a_2^2  -9 \!\: a_0^5 a_1 +6 \!\: a_0^5 a_2$.

Using Algorithm~\ref{alg:SubalgebraMinGens}  
we get $ \sat{S}{a_0} = K[a_0, g_2,g_3,g_4]$, and
indeed we can check that 
$$g_5 = 6a_0^4g_2 -3a_0^4g_3 -6a_0^2g_2g_3 -3a_0^2g_3^2 +4a_0^3g_4 -3g_2^2g_3 -g_3^3 -6a_0g_2g_4 +3a_0g_3g_4 +g_4^2$$

\end{example}

\section{The Graded Case: \sagbi for Saturation}
\label{sec:The Graded Case - Saturation}

We know that the main obstacle to the efficiency 
of Algorithm~\ref{alg:SubalgebraSaturation} is Step 2.2 which requires 
the computation of elimination ideals as explained in
Proposition~\ref{prop:subalgebraRepr}.
The first observation is that if the input polynomials in Step 2.2 are 
homogeneous, then it is well-known that the efficiency of the
computation of the elimination ideal can be improved.

The second observation is related to  a good use of 
the reduction described in Section~\ref{sec:Subalgebra-reduction}.
In general, it is desirable to streamline $\pig(f)$ as much as possible to
simplify the elimination process.
On the other end,
if the leading term of a polynomial $g$ is 
divisible by~$a_0$, then an \Srem of a polynomial~$f$ divided by
 $g$ in general does not ``simplify''~$\pig(f)$

Consequently, to maximize the chance of getting {\Srem}s 
divisible by $a_0$, our strategy is to use a term ordering $\sigma$ 
with the property that $\LTs(g) = \LTs(\pig(g))$ for every $g \in P{\setminus} \{0\}$.
These considerations motivate the following definition.

\begin{definition}\label{def:degrevtype}
Let $P=K[a_0, a_1, \dots, a_n]$, let $W \in \Mat_{m,n+1}(\ZZ)$ be a positive matrix,
Then let $\deg_W$ be the positive grading  on~$P$ defined by $W$.
A term ordering $\sigma$ on $\TT^{n+1}$ is said to be 
of \define{$a_0$-Deg$_W$Rev type} 
(or simply \define{$a_0$-DegRev type})
 is $\sigma$ is compatible with $\deg_W$ and
if $t, t' \in \TT^{n+1}$ are such that $\deg_W(t) = \deg_W(t')$ and 
$\log_{a_0}(t)< \log_{a_0}(t')$ then $t>_\sigma t'$.
\end{definition}

We recall that a way to construct a term ordering 
$\sigma$ of $a_0$-DegRev type
is to add to the matrix $W$
 the row $(-1,0,\dots, 0)$, and then completing it to a non-singular matrix.
For further details about this notion see~\cite[Sections 4.2 and  4.4]{KR2}.

Now we come to the main point of this section.
The most important feature of a positively graded
finitely generated $K$-subalgebra $S$ of $P$ which contains an indeterminate,
say $a_0$, is that the computation of $\sat{S}{a_0},$ and hence of $\Sat{a_0}(S)$
by Proposition~\ref{prop:MainProblem}.(b), can be essentially done by computing 
a suitable \sagbi of~$S$. Let us explain how.

Here is the main result of this section.

\begin{theorem}\label{thm:SatandSAGBI}
Let $\deg_W$ be the grading on~$P$ defined by a positive matrix~$W$,
and let~$\sigma$ on $\TT^{n+1}$ be a term ordering of 
$a_0$-{\rm DegRev} type.  Then let $S$ be a finitely generated \hbox{$W$-graded}
$K$-subalgebra of $P$,  let  $a_0\in S$, and
let $\SB$  be a $\sigma$-\sagbi of $S$.
Then the set 
$\{a_0\} \cup \{ \sat{g}{a_0} \mid g \in {\rm SB}\}$
is a $\sigma$-\sagbi of $\Sat{a_0}(S)$.
\end{theorem}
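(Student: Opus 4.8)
The plan is to verify the two defining properties of a $\sigma$-\sagbi directly. Write $G^{\mathrm{sat}} = \{a_0\}\cup\{\sat{g}{a_0}\mid g\in\SB\}$ and $A = \Sat{a_0}(S)$; I would show that $G^{\mathrm{sat}}\subseteq A$ and that the leading-term algebras agree, i.e.\ that $K[\LTs(f)\mid f\in A\setminus\{0\}]$ equals $K[\LTs(h)\mid h\in G^{\mathrm{sat}}]$. Since $a_0\in S$, Proposition~\ref{prop:MainProblem}.(\ref{satequal}) identifies $A$ with $\sat{S}{a_0}$, and a homogeneous-decomposition argument shows that $A$ is again $W$-graded; because $S$ is $W$-graded I may also assume that $\SB$ consists of $W$-homogeneous monic elements. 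Membership $G^{\mathrm{sat}}\subseteq A$ is immediate: $a_0\in S\subseteq A$, and for $g\in\SB$ one has $a_0^{m}\sat{g}{a_0}=g\in S$ for the exact power $a_0^m\mathbin{\|}g$, so $\sat{g}{a_0}\in S:a_0^\infty = A$. This also yields $K[\LTs(G^{\mathrm{sat}})]\subseteq K[\LTs(A)]$ for free, so the whole content lies in the reverse inclusion.

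The technical heart is a lemma on orderings of $a_0$-DegRev type: for a nonzero $W$-homogeneous $g$ with exact power $a_0^{m}\mathbin{\|}g$, I claim
$$\LTs(g) = a_0^{m}\,\LTs(\sat{g}{a_0}), \qquad a_0\nmid \LTs(\sat{g}{a_0}).$$
The factorization of the leading term is automatic for any term ordering; the real point is the non-divisibility. As $\sat{g}{a_0}$ is $W$-homogeneous and not divisible by $a_0$, all its terms share one $\deg_W$-value and at least one has $a_0$-exponent $0$; by Definition~\ref{def:degrevtype} the term of smallest $a_0$-exponent among terms of equal $\deg_W$ is the $\sigma$-largest, so $\LTs(\sat{g}{a_0})$ has $a_0$-exponent $0$. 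Applying this to every $g\in\SB$ records $\LTs(g)=a_0^{m_g}\LTs(\sat{g}{a_0})$ with $a_0\nmid\LTs(\sat{g}{a_0})$.

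For the reverse inclusion I would take a nonzero $f\in A$; since $A$ is graded and $\sigma$ is compatible with $\deg_W$, the leading term of $f$ comes from a single homogeneous component lying in $A$, so it suffices to treat $W$-homogeneous $f$. Choose $m$ with $F:=a_0^{m}f\in S$. As $\SB$ is a $\sigma$-\sagbi of $S$, the power product $\LTs(F)$ lies in the monomial algebra $K[\LTs(g)\mid g\in\SB]$ and hence factors as $\LTs(F)=\prod_{g\in\SB}\LTs(g)^{\alpha_g}$. Using the lemma and $\LTs(F)=a_0^{m}\LTs(f)$ I obtain
$$a_0^{m}\,\LTs(f) \;=\; a_0^{\,\sum_g\alpha_g m_g}\,\prod_{g\in\SB}\LTs(\sat{g}{a_0})^{\alpha_g},$$
where $T:=\prod_g\LTs(\sat{g}{a_0})^{\alpha_g}$ is not divisible by $a_0$. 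Writing $\LTs(f)=a_0^{k}\tau$ with $a_0\nmid\tau$, unique factorization of power products forces $\tau = T$ and $k = \sum_g\alpha_g m_g - m\ge 0$; therefore $\LTs(f)=a_0^{k}\,T\in K[a_0,\{\LTs(\sat{g}{a_0})\}] = K[\LTs(G^{\mathrm{sat}})]$. This gives $\LTs(A)\subseteq K[\LTs(G^{\mathrm{sat}})]$, hence the missing inclusion, and a standard subduction argument then upgrades the leading-term equality to $K[G^{\mathrm{sat}}]=A$, so $G^{\mathrm{sat}}$ is a $\sigma$-\sagbi of $A$.

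The main obstacle I anticipate is precisely the $a_0$-DegRev lemma and its use: everything hinges on the fact that saturating a \emph{homogeneous} polynomial by $a_0$ strips exactly the $a_0$-factor off its leading term and leaves a leading term coprime to $a_0$, which is what allows the $a_0$-exponents to be peeled off cleanly by unique factorization. Care is also needed to justify the reduction to $W$-homogeneous $\SB$ and to $W$-homogeneous test elements $f$ (using that $A$ is graded and $\sigma$ is compatible with $\deg_W$); without homogeneity the lemma fails, since the top-graded part of a non-homogeneous polynomial may be entirely divisible by $a_0$.
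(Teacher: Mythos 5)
Your proposal is correct and follows essentially the same route as the paper's proof: reduce to a $W$-homogeneous $f$ with $a_0^m f\in S$, factor $\LT_\sigma(a_0^m f)$ via the \sagbi property of $\SB$, and use the $a_0$-DegRev property to write $\LT_\sigma(g)=a_0^{r_g}\LT_\sigma(\sat{g}{a_0})$ with $a_0\nmid\LT_\sigma(\sat{g}{a_0})$ before cancelling the powers of $a_0$. You merely make explicit two points the paper leaves implicit (that $\Sat{a_0}(S)$ is again $W$-graded, and the proof of the leading-term lemma), and you replace the paper's ``set $a_0=1$'' step by the equivalent unique-factorization argument.
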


\begin{proof} 
It is enough to show that if $f \in \Sat{a_0}(S)$
is not divisible by $a_0$,
then $\LT_\sigma(f)$ is a power-product of elements in
$\{\LT_\sigma(\sat{g}{a_0}) \mid g \in {\rm SB}\}$.
From Proposition~\ref{prop:MainProblem}.(\ref{satequal})
we have  $\Sat{a_0}(S) = \sat{S}{a_0}$,
thus  $a_0^df \in S$ for some $d\in \NN$.
Therefore, there exist 
$\alpha_1, \dots, \alpha_t \in \NN$ and $g_1, \dots, g_t \in \SB$ such that 
$\LT_\sigma(a_0^d f) =  (\LT_\sigma (g_1))^{\alpha_1} \cdots 
(\LT_\sigma (g_t))^{\alpha_t} $. 
The assumptions on $S$ and $\sigma$ imply
that $a_0 \nmid \LT_\sigma(f)$, 
and for $i=1,\dots, t$, we have that 
$a_0 \nmid \LT_\sigma(\sat{g_i}{a_0})$
and there exists $r_i\in \NN$ such that 
$\LT_\sigma (g_i) = a_0^{r_i} \LT_\sigma(\sat{g_i}{a_0})$.
Thus, we have the equality 
$$
a_0^d \LT_\sigma(f) = 
a_0^{r_1\alpha_1} (\LT_\sigma (\sat{g_1}{a_0}))^{\alpha_1} 
\; \cdots \;
a_0^{r_t\alpha_t} (\LT_\sigma (\sat{g_t}{a_0}))^{\alpha_t}
$$
By setting $a_0=1$ we get the desired conclusion.
\end{proof}


The following easy example shows that the assumption about the term 
ordering $\sigma$ in the above theorem is essential.

\begin{example}\label{ex:essentialsigma}
Let $P=\QQ[a_0,a_1,a_2,a_3]$ and let $S=\QQ[\g,g_2,g_3]$
where we have ${\g = a_0}, \allowbreak\, g_2=a_0a_2 -a_1^2,\,  g_3= a_0a_3^2 -a_1^3$.
If $\sigma=\tt DegLex$, which is
not of $a_0$-DegRev type,
then $\LT_\sigma(a_0) = a_0$,
$\LT_\sigma(g_2) = a_0a_2$, and $\LT_\sigma(g_3) = a_0a_3^2$.
The three power products are algebraically independent, hence $\SB = \{a_0, g_2,g_3\}$
is a $\sigma$-\sagbi of $S$ by~\cite[Proposition 6.6.11]{KR2}.
Instead, if $\sigma $ is the term ordering defined by the matrix 
$\Big( \begin{smallmatrix}\ \ 1 &\ \  1 &\ 1\cr
-1 &\ \  0 &\ 0\cr
\ \ 0 & -1 &\ 0
\end{smallmatrix} \Big)$,
then $S$ is $W$-graded where $W = (1\ 1\ 1)$, and $\sigma$ is
a term ordering of $a_0$-DegRev type. Now we have 
$\LT_\sigma(a_0) = a_0$,
$\LT_\sigma(g_2) = a_1^2$, and $\LT_\sigma(g_3) = a_1^3$.
The $\sigma$-\sagbi of $S$ is 
SB = $\{a_0, g_2, g_3, g_4\}$ where 
$g_4 =   a_0a_1^4a_2 -{\tfrac{2}{3}}_{\mathstrut}a_0a_1^3a_3^2 -a_0^2a_1^2a_2^2 
+ \tfrac{1}{3}a_0^2a_3^4 +\tfrac{1}{3}a_0^3a_2^3 $.
By saturating $g_4$ we get $\tilde{g}_4 = \sat{g_4}{a_0} = 
a_1^4a_2 -\tfrac{2}{3}a_1^3a_3^2 -a_0a_1^2a_2^2 
+ \tfrac{1}{3}a_0a_3^4 +\tfrac{1}{3}a_0^2a_2^3$.
Moreover,
 by Algorithm~\ref{alg:SubalgebraMinGens} we check that 
$\Sat{a_0}(S) $ is minimally generated by $(a_0,g_2,g_3,\tilde{g}_4)$.
\end{example}

The following example illustrates a subtlety of the theorem. 
It happens that while the SAGBI basis of $S$ is infinite, the SAGBI basis of $\Sat{a_0}(S)$
is finite.

\begin{example}\label{ex:infinite-finite}
Let $P=\QQ[a_0,a_1,a_2]$ be graded by the matrix
$W = \left(\begin{smallmatrix} 
\ \ 1& 1& 1\\ 
-1& 0& 0  \end{smallmatrix} \right)$ and let $S=\QQ[\g,g_2,g_3, g_4,g_5]$
where we have $\g = a_0,\,  g_2= a_0a_1, \, g_3=a_1+a_2,\,  g_4 = a_1a_2,\, \allowbreak\, g_5 = a_1a_2^2$.
If $\sigma$ is a term ordering compatible with $W$, the $\sigma$-SAGBI
basis of $S$ is not finite. It is 
$$ 
\{a_0,\ a_1+a_2,\ a_0a_2,\ a_1a_2, \ a_0a_2^2, \ a_1a_2^2,\dots, a_0a_2^i, \ a_1a_2^i,\dots \}
$$
while the $\sigma$-SAGBI basis of $\Sat{a_0}(S)$ is finite. It is 
$$
\{ a_0, \ a_1+a_2, \ a_2 \}.
$$
\end{example}

The following procedure combines the saturation of the elements of a
$\sigma$-\sagbi,
as described in Theorem~\ref{thm:SatandSAGBI}, within the iterations of
the \sagbi computation.
It is a procedure because termination is not guaranteed, but if it
  terminates the output is correct.

\MyProcedure{SatSAGBI}  

\begin{description}[topsep=0pt,parsep=1pt] 
 \item[\textit{notation:}]  $P =K[a_0,\dots,a_n]$ is a polynomial
   ring graded by a positive matrix,
and let~$\sigma$ on $\TT^{n+1}$ be a term ordering of 
$a_0$-DegRev type. 
 \item[Input]  
$S = K[g_1,\dots,g_r]\subseteq P$, 
with $g_1,\dots, g_r$ homogeneous.

\item[1] Let $G = \{g_1,\dots,g_r\}$
 \item[2] \textit{Main Loop:}
   \begin{description}[topsep=0pt,parsep=1pt]
   \item[2.1] compute $G' = \{g'_1,\dots,g'_{s}\}$ the sat-interreduction
     of $G$.
   \item[2.2] compute $\{H_1, \dots, H_{t}\}$,
     a set of generators of
     $\Rel(\LT_\sigma(g'_1),\dots,\LT_\sigma(g'_{s}))$
   \item[2.3] for $j=1,\dots,t$, let 
     $h_j = \sat{\sarem(H_j(g'_1, \dots,  g'_{s}),\; G')}{a_0}$
   \item[2.4] if $h_1=\dots=h_{t} = 0$
          then \define{return $\{a_0\}\cup G'$} 
   \item[2.5] redefine $G$ as $G' \cup \{h_{1},\dots,h_{t}\}$
   \end{description}
 \item[Output ] $\{a_0\}\cup G'$ , a  $\sigma$-\sagbi of $\sat{S}{a_0}$
\end{description}

\begin{proof}
The definition of $G'$ in Step~2.1, and 
redefinition of $G$ in Step~2.5 correspond to the
definition of new subalgebras 
$S' = K[G']$
and 
$S'' = K[G' \cup \{h_{1},\dots,h_{t}\}]$
which satisfy $S \subseteq S' \subseteq S'' \subseteq \sat{S}{a_0}$,
thus all algebras defined in this procedure
have saturation $\sat{S}{a_0}$, by Theorem~\ref{thm:inclusion}.(c).

Each iteration of Step~2.1 is 
equivalent to restarting the computation of a
$\sigma$-\sagbi of $K[G']$, 
where all the elements in $G'$ are $a_0$-saturated.

If the procedure stops in Step~2.4, then 
$\{a_0\}\cup G'$ is a  $\sigma$-\sagbi of the algebra
 $A = K[\{a_0\}\cup G']$
and therefore, by Theorem~\ref{thm:SatandSAGBI}, $A = \sat{A}{a_0}$.

In conclusion, if it terminates, the output 
is the $\sigma$-\sagbi of $\sat{S}{a_0}$.
\end{proof}

Is this procedure the definitive solution of our problem?
The answer is yes and no.
The following example provides a negative answer
by showing that for some input this procedure cannot terminate because
there is no finite \sagbi.

\begin{example}\label{ex:infiniteSAGBI}
Let $P=\QQ[a_0,a_1,a_2]$ and let $S=\QQ[\g,g_2,g_3,g_4]$
where we have $\g = a_0$, \ $g_2=a_1{+}a_2$,\  $g_3= a_1a_2$,\  $g_4 = a_1a_2^2$.
Since $g_2,g_3,g_4$ do not involve $a_0$, it is clear that $S= \sat{S}{a_0}$.
On the other hand, whatever term ordering we choose, the \sagbi and the \satsagbi of $S$
are infinite (see~\cite[Example 6.6.7]{KR2}).
\end{example}

However, the computation of Example~\ref{ex:infinite-finite}
immediately terminates when we add the generator $a_2 = \sat{a_0a_2}{a_0}$,
and also terminates for many other examples we computed, leading us to
formulate the following conjecture.

\goodbreak
\begin{conjecture}\label{conj:SatSAGBI}
If there is a finite $\sigma$-\sagbi of~$\sat{S}{a_0}$, 
Procedure~\ref{alg:SatSAGBI} terminates in a finite number of
iterations, hence it is an algorithm.
\end{conjecture}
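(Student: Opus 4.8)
The plan is to translate termination into a statement about leading-term algebras and then to try to bound the number of iterations by the structure of the finite target \sagbi. By Proposition~\ref{prop:MainProblem}.(\ref{satequal}) and Theorem~\ref{thm:SatandSAGBI}, the hypothesis furnishes a finite $\sigma$-\sagbi $\widehat{G}=\{a_0,\widehat{g}_1,\dots,\widehat{g}_m\}$ of $\Sat{a_0}(S)$ whose elements $\widehat{g}_i$ are $a_0$-saturated, so that $K[a_0,\LTs(\widehat{g}_1),\dots,\LTs(\widehat{g}_m)]=K[\LTs(\Sat{a_0}(S))]$ and none of the $\LTs(\widehat{g}_i)$ is divisible by~$a_0$. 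I would first record that the stopping test in Step~2.4 is equivalent to $\{a_0\}\cup G'$ being a $\sigma$-\sagbi of $\Sat{a_0}(S)$: all polynomials produced by the procedure lie in $\Sat{a_0}(S)$ (this is the content of the correctness proof, via Theorem~\ref{thm:inclusion}.(c)), so once the current $a_0$-free leading terms together with $a_0$ generate $K[\LTs(\Sat{a_0}(S))]$, each relation $H_j$, evaluated at $G'$ and $\mathcal{S}$-reduced, lies in $\Sat{a_0}(S)$ and hence reduces to $0$, making every $h_j$ vanish. Thus it suffices to prove that after finitely many iterations $K[a_0,\LTs(G')]=K[\LTs(\Sat{a_0}(S))]$, where $K[\LTs(G')]=K[\LTs(g'_1),\dots,\LTs(g'_s)]$.

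Second, I would isolate the per-iteration progress. Writing $r_j=\sarem(H_j(g'_1,\dots,g'_s),G')$, the defining property of an \Srem gives $\LTs(r_j)\notin K[\LTs(G')]$ whenever $r_j\neq0$. Because $\sigma$ is of $a_0$-{\tt DegRev} type and all intermediate polynomials are $W$-homogeneous, the leading term carries the \emph{least} $a_0$-exponent, whence $\LTs(\sat{r_j}{a_0})=\sat{\LTs(r_j)}{a_0}$; this is exactly the computation that drives the proof of Theorem~\ref{thm:SatandSAGBI}. The mechanism I would hope to exploit is that, as long as $a_0$ is kept among the generators so that $K[\LTs(G')]\ni a_0$, a nonzero $\LTs(r_j)\notin K[\LTs(G')]$ cannot be pulled back inside by saturation: if $\sat{\LTs(r_j)}{a_0}=a_0^{-k}\LTs(r_j)$ lay in $K[\LTs(G')]$, then $\LTs(r_j)=a_0^{k}\cdot(\text{that element})$ would lie there too. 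Hence each nonzero $h_j$ would strictly enlarge the leading-term algebra, and every intermediate leading-term algebra is sandwiched inside the fixed, finitely generated algebra $K[\LTs(\Sat{a_0}(S))]$.

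The hard part is to turn this strict growth into termination, and here two genuine obstacles appear. First, bookkeeping around $a_0$: sat-interreduction in Step~2.1 saturates $a_0$ itself to the constant~$1$ and discards it, so one must justify re-inserting $a_0$ (or argue that, since the $\LTs(g'_i)$ are $a_0$-free, the indeterminate $a_0$ is algebraically independent from them, contributes no relations, and may be carried along harmlessly). Second, and more seriously, strict growth inside a finitely generated monoid algebra does \emph{not} by itself force termination: the submonoids of a finitely generated affine monoid need not satisfy the ascending chain condition, a phenomenon visible in Example~\ref{ex:infinite-finite}, where $S$ has an infinite $\sigma$-\sagbi although $\Sat{a_0}(S)$ has a finite one. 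In the ordinary \sagbi algorithm this gap is closed by processing relations in strictly increasing degree and stopping once the degree exceeds $D=\max_i\deg(\widehat{g}_i)$. That device fails here precisely because saturation \emph{lowers} the $W$-degree (as $\deg_W(a_0)>0$): a relation of arbitrarily large degree can, after $\mathcal{S}$-remaindering and saturating, spawn a brand-new generator of small degree, so there is no evident degree beyond which new generators cannot be born. Making the conjecture a theorem therefore reduces to producing an a~priori bound — presumably in terms of $D$ and the generators of $K[\LTs(\Sat{a_0}(S))]$ — on the degree of the relations whose saturated remainders can still enlarge $K[a_0,\LTs(G')]$; lifting the relations of the finite target \sagbi back through the saturation map is the natural route, and supplying such a bound is the step I expect to be the main obstacle.
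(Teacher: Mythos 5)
The statement you were asked to prove is labelled a \emph{conjecture} in the paper: the authors give no proof of it, and immediately after stating it they add a remark that ``the delicate point in proving this conjecture is that Steps~2.1 and~2.5 might produce a sequence of algebras ever closer to $\Sat{g}(S)$, but never getting to it.'' Your proposal does not close this gap either --- you say so yourself in your final sentence --- so what you have written is a (largely correct) analysis of the obstruction rather than a proof. The positive content of your write-up is sound and matches what the paper does establish elsewhere: the reduction of the stopping test to the equality $K[a_0,\LTs(G')]=K[\LTs(\Sat{a_0}(S))]$ of leading-term algebras, the compatibility $\LTs(\sat{r}{a_0})=\sat{\LTs(r)}{a_0}$ for $\sigma$ of $a_0$-{\tt DegRev} type (which is exactly the computation in the proof of Theorem~\ref{thm:SatandSAGBI}), and the observation that each nonzero $h_j$ strictly enlarges $K[a_0,\LTs(G')]$.

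The genuine gap is the one you name: a strictly ascending chain of finitely generated subalgebras of the affine monoid algebra $K[\LTs(\Sat{a_0}(S))]$ need not stabilize, and the degree-truncation device that rescues the ordinary \sagbi computation is unavailable because $\sat{\cdot}{a_0}$ lowers the $W$-degree (this is precisely why Section~\ref{sec:U-invariants} has to introduce a second grading with $\deg(a_0)=0$ before truncation becomes legitimate). Your proposed route --- an a priori degree bound on the relations whose saturated remainders can still enlarge the leading-term algebra, obtained by lifting the relations of the finite target \sagbi through saturation --- is a reasonable line of attack, but it is not carried out, and no such bound is known to the authors. One smaller inaccuracy worth flagging: your claimed equivalence of the Step~2.4 test with $\{a_0\}\cup G'$ being a $\sigma$-\sagbi of $\Sat{a_0}(S)$ is slightly too strong as stated, since $\sarem(H_j(g'_1,\dots,g'_s),G')$ may be nonzero with leading term divisible by $a_0$ even when the leading-term algebras already agree; the procedure then only discovers this redundancy at the sat-interreduction of the \emph{next} iteration, so the forward implication (test passes $\Rightarrow$ \sagbi) is the one actually used in the paper's correctness argument, while the converse needs an extra iteration. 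In short: your diagnosis of where the difficulty lies agrees with the authors' own remark, but the statement remains open.
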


\begin{remark}
The delicate point in proving 
this conjecture is that Steps~2.1 and~2.5
might produce a sequence of algebras
ever closer to $\Sat{g}(S)$, but never getting to it.

\end{remark}

The positive side is that the computation using \sagbis
provides not only a set of generators of the saturation of $S$ but also
a \sagbi of it. Secondly the computation of a \sagbi
needs to determine relations only among power-products, thus may
use toric ideals whose computation is considerably faster than the computation 
via general elimination needed for determining $\Relg$.

Let us show an example where the  above procedure works very well.

\begin{example}\label{ex:standardgraded}
We let $P=\QQ[a_0,a_1,a_2]$ graded by the matrix 
$W= (1,1,1)$ and  use a term ordering of $a_0$-DegRev type. Then let 
$\g = a_0$,\,  $g_2 = a_1^2 -a_2^2 +a_0a_2$, \,
$g_3 = a_1a_2 -a_2^2 +a_0a_1$,\,
$g_4 = a_1^3$,\,
$g_5 = a_2^4$ . We want to saturate the algebra 
$S = \QQ[\g, g_2, g_3, g_4, g_5]$
with respect to $g$. Using Algorithm~\ref{alg:SatSAGBI}, 
we get a \sagbi of $\Sat{\g}(S)$ which consists of 12 polynomials.
Using Algorithm~\ref{alg:SubalgebraMinGens}
 we  get a minimal set of generators of  $\Sat{\g}(S)$.
The result is $\Sat{\g}(S) = \QQ[\g, g_2, g_3, g_4, g_5, g_6, g_7, g_8]$ where 
{\small
$$
\begin{array}{lll}
g_6&=& a_1^3 a_2^2  -\frac{23}{15} \!\: a_1^2 a_2^3  -\frac{11}{45} \!\: a_1 \!\: a_2^4  +\frac{44}{45} \!\: a_2^5  -\frac{5}{18} \!\: a_0 \!\: a_1 \!\: a_2^3  +\frac{6}{5} \!\: a_0^2 a_1^2 a_2 -\frac{23}{30} \!\: a_0^2 a_1 \!\: a_2^2  +\frac{5}{6} \!\: a_0^2 a_2^3  -\frac{1}{5} \!\: a_0^3 a_2^2 \\
& & +\frac{11}{15} \!\: a_0^4 a_1 -\frac{1}{2} \!\: a_0^4 a_2 \\

g_7&=& a_2^7  -\frac{295}{2} \!\: a_0^2 a_1^2 a_2^3  -\frac{65}{6} \!\: a_0^2 a_1 \!\: a_2^4  +\frac{119}{6} \!\: a_0^2 a_2^5  +\frac{1217}{12} \!\: a_0^3 a_1 \!\: a_2^3  -30 \!\: a_0^4 a_1^2 a_2 +\frac{319}{4} \!\: a_0^4 a_1 \!\: a_2^2  -\frac{275}{4} \!\: a_0^4 a_2^3  \\
 & & -42 \!\: a_0^5 a_2^2  +\frac{65}{2} \!\: a_0^6 a_1 +\frac{219}{4} \!\: a_0^6 a_2 \\

g_8&=& a_1 \!\: a_2^6  -\frac{576}{5} \!\: a_0^2 a_1^2 a_2^3  -\frac{179}{30} \!\: a_0^2 a_1 \!\: a_2^4  +\frac{193}{15} \!\: a_0^2 a_2^5  +\frac{214}{3} \!\: a_0^3 a_1 \!\: a_2^3  -\frac{54}{5} \!\: a_0^4 a_1^2 a_2 +\frac{262}{5} \!\: a_0^4 a_1 \!\: a_2^2  \\
 & & -60 \!\: a_0^4 a_2^3  -\frac{126}{5} \!\: a_0^5 a_2^2  +\frac{239}{10} \!\: a_0^6 a_1 +39 \!\: a_0^6 a_2
\end{array}
$$
}
In this case the computation takes a few seconds. We could also use
Algorithm~\ref{alg:SubalgebraSaturation}
to compute $\Sat{g}(S)$, but generally it gives neither a minimal set
  of generators, nor a \sagbi of it.
\end{example}

In the following example the performance of
Procedure~\ref{alg:SatSAGBI} is far superior.

\begin{example}\label{ex:standardgradedMedium}
We let $P=\QQ[a_0,a_1,a_2,a_3]$ graded by the matrix 
$W= (1,1,1,1)$ and  use a term ordering of $a_0$-DegRev type. Then let 
$g = a_0$,\,  $g_2 = a_1^2 -a_0a_3$, \,
$g_3 = a_1a_2  +a_0a_1$,\,
$g_4 = a_3^2$,\,
$g_5 = a_2^2$,\, 
$g_6 = a_1^3 -a_2^3$.
We want to saturate the algebra 
$S = \QQ[\g, g_2, g_3, g_4, g_5,g_6]$ with respect to $g$. 
Using Procedure~\ref{alg:SatSAGBI},  we get a \sagbi of $\Sat{g}(S)$ which 
consists of 21 polynomials.
Using Algorithm~\ref{alg:SubalgebraMinGens}
 we  get a minimal set of generators for $\Sat{\g}(S)$.
The result is $\Sat{\g}(S) = \QQ[\g,g_2,g_3,g_4,g_5,g_6, g_7,\dots,  g_{15}]$ where 
{\small
$$
\begin{array}{lll}
g_7 & = & a_1^2 a_2 +\frac{1}{2} \!\: a_2^2 a_3 +\frac{1}{2} \!\: a_0^2 a_3 \\

g_8 & = &  a_1^4 a_3 -a_1 \!\: a_2^3 a_3 -a_0 \!\: a_1 \!\: a_2^2 a_3 +\frac{2}{3} \!\: a_0^2 a_2^3  -a_0^2 a_1 \!\: a_2 \!\: a_3 -\frac{2}{3} \!\: a_0^2 a_3^3  -a_0^3 a_1 \!\: a_3 \\

g_9 & = &  a_2^3 a_3^3  -a_0 \!\: a_1 \!\: a_2^3 a_3 +\frac{1}{4} \!\: a_0 \!\: a_2^2 a_3^3  -a_0^2 a_1 \!\: a_2^2 a_3 +\frac{1}{2} \!\: a_0^3 a_2^3  -\frac{3}{4} \!\: a_0^3 a_1 \!\: a_2 \!\: a_3 +\frac{3}{4} \!\: a_0^3 a_3^3  -\frac{3}{4} \!\: a_0^4 a_1 \!\: a_3 \\

g_{10} & = &  a_1 \!\: a_2^4 a_3 +a_0 \!\: a_1 \!\: a_2^3 a_3 \\

g_{11} & = &  a_1^4 a_2 \!\: a_3 +\frac{4}{3} \!\: a_0 \!\: a_1 \!\: a_2^3 a_3 +\frac{2}{3} \!\: a_0 \!\: a_2^2 a_3^3  +\frac{4}{3} \!\: a_0^2 a_1 \!\: a_2^2 a_3 -\frac{2}{3} \!\: a_0^3 a_2^3  +a_0^3 a_1 \!\: a_2 \!\: a_3 +a_0^4 a_1 \!\: a_3 \\

g_{12} & = &  a_1^2 a_2^2 a_3^3  +\frac{1}{2} \!\: a_2^3 a_3^4  -\frac{1}{4} \!\: a_0 \!\: a_2^5 a_3 +\frac{1}{4} \!\: a_0 \!\: a_1^2 a_2 \!\: a_3^3 
 +\frac{9}{8} \!\: a_0^2 a_1^3 a_2 \!\: a_3 -\frac{1}{8} \!\: a_0^2 a_2^4 a_3 -\frac{3}{4} \!\: a_0^2 a_1^2 a_3^3 \\
 & &  +\frac{9}{8} \!\: a_0^3 a_1^3 a_3 +\frac{1}{2} \!\: a_0^3 a_2^3 a_3 +\frac{3}{8} \!\: a_0^4 a_2^2 a_3 \\

g_{13} & = &  a_2^6 a_3 +\frac{1}{8} \!\: a_2^3 a_3^4  +\frac{39}{16} \!\: a_0 \!\: a_2^5 a_3 +\frac{9}{16} \!\: a_0 \!\: a_1^2 a_2 \!\: a_3^3  -\frac{135}{32} \!\: a_0^2 a_1^3 a_2 \!\: a_3 +\frac{15}{32} \!\: a_0^2 a_2^4 a_3 +\frac{9}{16} \!\: a_0^2 a_1^2 a_3^3\\
& &   -\frac{135}{32} \!\: a_0^3 a_1^3 a_3 -\frac{19}{8} \!\: a_0^3 a_2^3 a_3 -\frac{45}{32} \!\: a_0^4 a_2^2 a_3 \\

g_{14} & = & a_1 \!\: a_2^5 a_3 -\frac{1}{4} \!\: a_2^4 a_3^3  -a_0^2 a_1 \!\: a_2^3 a_3 -\frac{1}{2} \!\: a_0^2 a_2^2 a_3^3  +\frac{3}{4} \!\: a_0^4 a_3^3 \\

g_{15} & = &  a_1^2 a_2^4 a_3 -\frac{1}{4} \!\: a_1 \!\: a_2^3 a_3^3  -\frac{1}{4} \!\: a_0 \!\: a_1 \!\: a_2^2 a_3^3  -2 \!\: a_0^2 a_1^2 a_2^2 a_3 -\frac{3}{4} \!\: a_0^2 a_1 \!\: a_2 \!\: a_3^3  -a_0^3 a_1^2 a_2 \!\: a_3 -\frac{3}{4} \!\: a_0^3 a_1 \!\: a_3^3 
\end{array}
$$
}
The computation took about 75 seconds using 
Procedure~\ref{alg:SatSAGBI} and Algorithm~\ref{alg:SubalgebraMinGens}.
We tried to do the computation using Algorithm~\ref{alg:SubalgebraSaturation}
and we did not succeed.
\end{example}

\section{A Special Multigraded Case: Truncated \sagbi for Saturation}
\label{sec:U-invariants}
In general, the computation of $\sat{S}{a_0}$ is very expensive.
The performance of Algorithm~\ref{alg:SubalgebraSaturation} is poor 
even for examples of moderate size. The performance of
Procedure~\ref{alg:SatSAGBI} 
is usually
much better, but the computation of a \sagbi may be prohibitive as well.
However, there is a situation where it is possible to compute $(\sat{S}{a_0})_{\le d}$,
in other words a truncation of $\sat{S}{a_0}$ at  degree $d$. Let us see how. 

In Section~\ref{sec:The Graded Case - Saturation} we have already seen  
that the main requirement to compute the saturation of $S$ with respect to an indeterminate, 
is to compute an $a_0$-saturated \sagbi of $S$ with respect to 
a term ordering of $a_0$-DegRev type.
Our question is: if the computation of a saturating \sagbi is prohibitive, can we at least compute a 
truncation of a saturating \sagbi  at  a given degree? The main obstacle is that when we saturate
a computed polynomial, we may lower its degree. If $a_0$ is the chosen indeterminate,
the only possibility of keeping the degree fixed
is 
when the input is homogeneous with respect to
a grading where $\deg(a_0) = 0$. This condition is clearly incompatible with a term ordering
of $a_0$-DegRev type, unless 
the input is homogeneous also with respect to
 another grading with $\deg(a_0)>0$.

The following example  shows that  in many cases 
the computation of the saturation may be too hard
even when working over a small prime field.

\begin{example}\label{ex:bigradedHard}
We let $P=\ZZ/(101)[a_0,a_1,a_2, a_3, a_4]$ standard graded by 
the matrix  $W= (1,1,1,1,1)$, and use a term ordering of $a_0$-DegRev type. 
Then we let 
$\g=a_0$,\,
$g_2= a_1^2-a_2^2+a_0a_3$,\,
$g_3= a_1^3 +a_2^3 +a_0^2a_4$,\,
$g_4= a_3^3- a_0a_4^2$,\,
$g_5= a_4^3$, and
want to saturate the algebra 
$S = \ZZ/(101)[\g, g_2, g_3, g_4, g_5]$
with respect to $\g$. 

No matter which algorithm we use, there is no way.
However, we observe that the given polynomials are also homogeneous
with respect to the grading given by $(0\  1\  1\  2\  3)$ and this observation 
suggests an interesting approach which we are going to explain.
We continue this discussion in Example~\ref{ex:bigradedHard-cont}.
\end{example}

We start with the following easy lemma.

\begin{lemma}\label{lem:finitepowerproducts}
Let $P= K[a_0,a_1,\dots, a_n]$, let $d_1, \dots, d_n \in \NN_+$, 
let $P$ be (single) graded by  $W=(0\ d_1 \cdots\,  d_n)$, 
and let $S \subset P$ be a finitely generated monomial $K$-algebra.
Then let $d \in \NN_+$, and let 
$S_d = \{f \in P \mid f \hbox{\ \rm homogeneous of degree\ } $d$\}$.  
\begin{enumerate}
\item The set $S_d$ is a $K[a_0]$-module.

\item The $K[a_0]$-module $S_d$ is finitely generated, and there is a unique 
set of power products which minimally generate it.
\end{enumerate}
\end{lemma}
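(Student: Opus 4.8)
The plan is to exploit the two special features of the grading $W=(0\ d_1\cdots d_n)$: that $a_0$ carries degree $0$, and that every other indeterminate carries a \emph{positive} degree. The first feature is what turns $S_d$ into a $K[a_0]$-module, while the second is what forces the finiteness in (b). Throughout I use that a monomial $K$-algebra $S$ is the $K$-span of the power products it contains (those expressible as products of its monomial generators), so that these power products form a $K$-basis of $S$, and I use that $a_0\in S$.

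For claim (a) I would first note that since $\deg_W(a_0)=0$, multiplication by $a_0$ preserves the $W$-degree; hence for any homogeneous $f\in S_d$ and any $j\in\NN$ we have $a_0^{j}f\in S$ (product of elements of the algebra $S$) with $\deg_W(a_0^{j}f)=d$, so $a_0^{j}f\in S_d$. Combined with the fact that $S_d$ is a $K$-subspace of $P$, this gives $K[a_0]\cdot S_d\subseteq S_d$, i.e.\ $S_d$ is a $K[a_0]$-submodule of $P$.

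For claim (b) the key step is to decompose $S_d$ according to the $a_0$-free part. Write each power product $t\in\TT^{n+1}$ uniquely as $t=a_0^{e}\,u$ with $u\in\TT(a_1,\dots,a_n)$, and observe $\deg_W(t)=\deg_W(u)$ since $\deg_W(a_0)=0$. Let $\mathcal U_d$ be the set of $a_0$-free power products of degree $d$; as $d_1,\dots,d_n$ are all positive, any $u=a_1^{b_1}\cdots a_n^{b_n}$ with $\sum_i b_i d_i=d$ satisfies $\sum_i b_i\le d$, so $\mathcal U_d$ is \emph{finite}. Because $S$ is a monomial algebra, $S_d$ is the $K$-span of the power products $a_0^{e}u\in S$ with $u\in\mathcal U_d$. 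For each $u\in\mathcal U_d$ set $E_u=\{e\in\NN\mid a_0^{e}u\in S\}$; since $a_0\in S$, $E_u$ is upward closed, hence either empty or $\{e\mid e\ge\epsilon_u\}$ for a unique minimal $\epsilon_u$. Using the linear independence of distinct power products and the upward-closure of each $E_u$, I would then conclude
$$S_d \;=\; \bigoplus_{u\in\mathcal U_d,\ E_u\neq\emptyset} a_0^{\epsilon_u}u\cdot K[a_0],$$
the sum being direct because distinct $u$ give power products with distinct $a_0$-free parts. This exhibits $S_d$ as a free $K[a_0]$-module of finite rank $|\{u\mid E_u\neq\emptyset\}|$, proving finite generation, with $\{a_0^{\epsilon_u}u\}$ a generating set of power products.

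Finally, for minimality and uniqueness I would argue exactly as for minimal monomial generators of a monomial ideal: a power product $a_0^{e}u\in S_d$ is a proper $K[a_0]$-multiple of another power product of $S_d$ precisely when $e>\epsilon_u$, so the power products of $S_d$ that are \emph{not} such multiples are exactly $\{a_0^{\epsilon_u}u\mid E_u\neq\emptyset\}$. Any generating set of power products must contain each of these (none can be obtained from the others by multiplying by a power of $a_0$), and this set already generates; hence it is the unique minimal generating set of power products. The only point needing real care is the direct-sum/freeness claim underlying the whole argument; once that is set up, the finiteness of $\mathcal U_d$ and the uniqueness of each $\epsilon_u$ make the rest routine bookkeeping.
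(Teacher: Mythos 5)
Your proof is correct and follows essentially the same route as the paper's: both reduce to the finitely many $a_0$-free power products of degree $d$ (finite because all $d_i>0$) and attach to each the minimal $a_0$-exponent landing in $S$, these minimal power products forming the unique minimal generating set. You are merely more explicit than the paper about the direct-sum decomposition over $K[a_0]$ and about the use of the hypothesis $a_0\in S$ (needed for claim (a) and for the upward-closedness of the exponent sets), which the paper leaves implicit from the context in which the lemma is applied.
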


\begin{proof}
Claim (a) follows from the fact that $\deg(a_0) = 0$. Let $S\subset P_d$ denote the set of 
power products of degree $d$ in $\TT(a_1, \dots, a_n)$, and let $t_1, \dots, t_r$ be 
the unique basis of power products of $S_d$ as a $K$-vector space. For each $t_i$ there is 
a minimum exponent $e_i$ such that $\tau_i =a_0^{e_i}t_i \in S_d$. It follows that $S_d$ is
 minimally generated by~$\{\tau_1, \dots, \tau_r\}$.
\end{proof}

\begin{proposition}\label{prop:truncsatsagbi}
Let $P = K[{a_0}_{\mathstrut}, a_1,\dots, a_n]$, 
let $d_1,\dots, d_n \in \NN_+$, $p_1, \dots, p_n \in \ZZ$, let 
$P$ be graded by  $W$ whose first two rows are 
$W_1 = (0 \    d_1  \cdots\,  d_n)$,  $W_2 = (1 \    p_1  \cdots\,  p_n)$,
and let~$\sigma$ be a term 
ordering on $\TT^{n+1}$ compatible with $W$ and  of $a_0$-DegRev type.
Then let~$S$ be a finitely generated \hbox{$W$-graded}
$K$-subalgebra of $P$,  let  $a_0\in S$, let $\SB$
be a $\sigma$-\sagbi of~$S$,  let $d \in \NN_+$, and
let $\SB_{\le d} = \{g \in \SB \mid  g\hbox{\ is $W$-homogeneous and\ }  \deg_{W_1}(g) \le d\}$.

Then 
$\{a_0\} \cup \{ \sat{g}{a_0} \mid g \in {\rm SB_{\le d}}\}$  is 
a $d$-truncated $\sigma$-\sagbi of $S$.
\end{proposition}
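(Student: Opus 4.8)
The plan is to deduce the statement directly from Theorem~\ref{thm:SatandSAGBI}; the only genuinely new feature is that the first row $W_1=(0\ d_1\ \cdots\ d_n)$ of $W$ defines a grading in which $a_0$ has degree $0$, so that saturating by $a_0$ preserves the $W_1$-degree. (Throughout, the truncation is to be read with respect to $\deg_{W_1}$, and the algebra under consideration is $\sat{S}{a_0}$, for which Theorem~\ref{thm:SatandSAGBI} already provides a full $\sigma$-\sagbi.) First I would observe that, $S$ being $W$-graded and finitely generated, we may take $\SB$ to consist of $W$-homogeneous polynomials, and that its only element of $W_1$-degree $0$ is $a_0$ itself: an element of $\SB$ whose $\LT_\sigma$ is the indeterminate $a_0$ must be $W$-homogeneous of $W_1$-degree $0$, hence a monomial in $a_0$ alone, hence $a_0$; in particular $\sat{a_0}{a_0}=1$. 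By Theorem~\ref{thm:SatandSAGBI} the full set $\widetilde{\SB}=\{a_0\}\cup\{\sat{g}{a_0}\mid g\in\SB\}$ is a $\sigma$-\sagbi of $\sat{S}{a_0}$, and it remains to see that dropping the elements coming from $g$ with $\deg_{W_1}(g)>d$ leaves the leading terms in $W_1$-degree $\le d$ untouched.

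Next I would record the two relevant facts about saturation. Since $\deg_{W_1}(a_0)=0$, dividing a $W$-homogeneous $g$ by the top power of $a_0$ dividing it keeps every term in the same $W_1$-degree, so $\sat{g}{a_0}$ is again $W$-homogeneous with $\deg_{W_1}(\sat{g}{a_0})=\deg_{W_1}(g)$; and, exactly as in the proof of Theorem~\ref{thm:SatandSAGBI}, the $a_0$-DegRev property forces $a_0\nmid\LT_\sigma(\sat{g}{a_0})$. Now let $f\in\sat{S}{a_0}$ be $W$-homogeneous with $\deg_{W_1}(f)\le d$. Since $\widetilde{\SB}$ is a $\sigma$-\sagbi we may write
$$\LT_\sigma(f)=a_0^{\beta}\prod_{g\in\SB}\LT_\sigma(\sat{g}{a_0})^{\alpha_g},$$
and applying $\deg_{W_1}$, together with $\deg_{W_1}(a_0)=0$ and $\deg_{W_1}(\LT_\sigma(\sat{g}{a_0}))=\deg_{W_1}(g)$, yields $\sum_{g\in\SB}\alpha_g\deg_{W_1}(g)=\deg_{W_1}(f)\le d$.

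Because every $d_i$ is strictly positive, each $W$-homogeneous $g\in\SB$ other than $a_0$ satisfies $\deg_{W_1}(g)>0$; hence whenever $\alpha_g\ge1$ the displayed degree equation gives $\deg_{W_1}(g)\le d$, that is $g\in\SB_{\le d}$, while the factor coming from $g=a_0$ is $\LT_\sigma(\sat{a_0}{a_0})=1$ and may be dropped. Thus $\LT_\sigma(f)$ is already a power-product of leading terms of $\{a_0\}\cup\{\sat{g}{a_0}\mid g\in\SB_{\le d}\}$, which is what the $d$-truncated \sagbi property requires. The step I expect to be the crux is precisely this last control of the factorization: it succeeds only because $W_1$ assigns degree $0$ to $a_0$ and strictly positive degree to every other indeterminate, so that no generator of $W_1$-degree larger than $d$ can enter the expansion of a leading term of $W_1$-degree $\le d$, and — equally important — saturation never moves an element between $W_1$-degrees. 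This is exactly the phenomenon that breaks down for an ordinary single $a_0$-DegRev grading, where saturating lowers degrees and truncation would not be preserved.
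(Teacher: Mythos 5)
Your proof is correct, and you rightly read the conclusion as concerning $\sat{S}{a_0}$ rather than $S$ (the statement as printed has a slip there; the surrounding text and Algorithm~\ref{alg:TruncSatSAGBI} confirm the intended reading). Your route, however, is organized differently from the paper's. The paper argues procedurally: it invokes Remark~\ref{rem:IncreasingDegreeSAGBI} to say the $\sigma$-\sagbi computation may proceed by increasing $\deg_{W_1}$, notes that saturation by $a_0$ does not change $\deg_{W_1}$, and then appeals to Lemma~\ref{lem:finitepowerproducts} to conclude that once the computation passes $W_1$-degree $d$ it never returns below it. You instead give a static verification: starting from the full saturated \sagbi supplied by Theorem~\ref{thm:SatandSAGBI}, you take a $W$-homogeneous $f\in\sat{S}{a_0}$ with $\deg_{W_1}(f)\le d$, factor $\LT_\sigma(f)$ through the leading terms of $\{a_0\}\cup\{\sat{g}{a_0}\mid g\in\SB\}$, and use that $\deg_{W_1}(a_0)=0$ while every other generator has strictly positive $W_1$-degree to conclude that only generators of $W_1$-degree $\le d$ can occur with positive exponent. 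This bypasses Lemma~\ref{lem:finitepowerproducts} and the degree-by-degree framing entirely, and it makes explicit the two facts the paper leaves implicit (that $\sat{g}{a_0}$ stays $W$-homogeneous of the same $W_1$-degree, and that elements of $W_1$-degree $0$ in $\SB$ are powers of $a_0$ whose saturations are constants). What your version buys is a self-contained and checkable proof of the truncation claim; what the paper's version buys is a justification that one can actually \emph{compute} the truncated basis without ever touching higher degrees, which is the form needed to certify termination of Algorithm~\ref{alg:TruncSatSAGBI}. Both rest on the same two pillars: Theorem~\ref{thm:SatandSAGBI} and the positivity pattern of $W_1$.
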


\begin{proof}
Our assumptions are compatible with those of Theorem~\ref{thm:SatandSAGBI}.
When we compute a $\sigma$-\sagbi of $S$ we may proceed by increasing degrees
as suggested by Remark~\ref{rem:IncreasingDegreeSAGBI}. We proceed using the degree $\deg_{W_1}$.
The merit  is that the saturation of a polynomial does not change $\deg_{W_1}$.
Then Lemma~\ref{lem:finitepowerproducts} shows that the computation of the $\sigma$-\sagbi
jumps over $d$ and clearly it does not come back anymore. The conclusion follows.
\end{proof}

\MyAlgorithm{TruncSatSAGBI}
 
\begin{description}[topsep=0pt,parsep=1pt] 
 \item[\textit{notation:}]  $P =K[a_0,\dots,a_n]$ is a polynomial
   ring graded by  $W$ whose first two rows are 

$W_1 = (0 \    d_1  \cdots\,  d_n)$,  $W_2 = (1 \    p_1  \cdots\,  p_n)$,
with $d_1,\dots, d_n \in \NN_+$, $p_1, \dots, p_n \in \ZZ$.

Let~$\sigma$ be a term ordering on $\TT^{n+1}$  
compatible with $W$ and  of $a_0$-DegRev type.
 \item[Input]  
$S = K[g_1,\dots,g_r]\subseteq P$, with $g_i$ $W$-homogeneous for $i=1,\dots,r$.
\item[1] Let $G = \{g_1,\dots,g_r\}$
 \item[2] \textit{Main Loop:}
   \begin{description}[topsep=0pt,parsep=1pt]
   \item[2.1] compute $G' = \{g'_1,\dots,g'_{s}\}$ the sat-interreduction
     of $G$.
   \item[2.2] compute $\{H_1, \dots, H_{t}\}$,
     the subset of elements  of $W_1$-degree $\le d$ in a set of generators of
     $\Rel(\LT_\sigma(g'_1),\dots,\LT_\sigma(g'_{s}))$
   \item[2.3] for $j=1,\dots,t$, let 
     $h_j = \sat{\sarem(H_j(g'_1, \dots,  g'_{s}),\; G')}{a_0}$
   \item[2.4] if $h_1=\dots=h_{t} = 0$
          then \define{return $G'$}
   \item[2.5] redefine $G$ as $G' \cup \{h_{1},\dots,h_{t}\}$
   \end{description}
 \item[Output ] $G'$, a  $\sigma$-\sagbi of $\sat{S}{a_0}$ truncated at
   $W_1$-degree $d$.
\end{description}

\begin{proof}
Correctness  and termination follow immediately from Proposition~\ref{prop:truncsatsagbi}.
\end{proof}

Let us go back to Example~\ref{ex:bigradedHard}.

\begin{example}\label{ex:bigradedHard-cont}
Using the data introduced in Example~\ref{ex:bigradedHard} we compute
$(\sat{S}{a_0})_{\le 30}$. 

In less than a second we get 
$(\sat{S}{a_0})_{\le 30} = (K[a_0,g_2,\dots, g_6, g_7])_{\le 30}$ where
$g_7$ is a polynomial of bi-degree $(30,29)$ with 
767 terms and
$\LT(g_7) = a_1^{20}a_2^8a_3$.

In about 23 seconds we compute 
$(\sat{S}{a_0})_{\le 90} = (K[a_0,g_2,\dots, g_6, g_7,g_8])_{\le 90}$ where
$g_8$  is a polynomial of bi-degree $(90,86)$ with 
19559 terms and
$\LT(g_8) = a_1^59a_2^24a_3^2a_4$.

Then we try to compute $(\sat{S}{a_0})_{\le 300}$ and after about 30 minutes we realise that 
the algorithm gets a new polynomial
with 516775 terms and
 leading term  $a_1^{176} a_2^{73} a_3^6 a_4^3$.
At this point  we understand that
the computation is not going to end
in a reasonable amount of time. 

In conclusion, we are able to compute $(\sat{S}{a_0})_{\le 90}$, but we are not even able to 
know whether $\sat{S}{a_0}$ is finitely generated or not.
\end{example}

\subsection{Computing $U$-invariants}

Unlike Example~\ref{ex:bigradedHard-cont},  there are cases where a bit of extra knowledge
allows us to fully compute the saturation of a subalgebra using the technique of truncation.
And we go back to the introduction where we started
our discussion about the computation 
the classical $U$-invariants, which gave us
 a first motivation of our work.  Recall
 that the problem is to compute the $\CC$-subalgebra 
 $S_n=\CC[c_2,\dots, c_n][a_0,a_0^{-1}]\cap \CC[a_0,\dots, a_n]$ of
 $ \CC[a_0,\dots, a_n]$ where the polynomials
 $c_i$'s are defined in the introduction.
 
 First of all, it follows from Proposition~\ref{prop:MainProblem}.(\ref{satequal}) that 
 $S_n= \CC[a_0, c_2,\dots, c_n] : a_0^\infty$.
 Then we observe  that $a_0, c_2,\dots, c_n$ are elements of  
 the polynomial ring $P=\QQ[a_0, a_1, \dots,,a_n]$, hence all the 
 computation of the \satsagbi involves polynomials in $P$,
 so the generators of  $S_n$ lie in~$P$.
 To see more on this topic see~\cite{RS}.
 
 The third remark is that 
$a_0, c_2,\dots, c_n$ are bi-homogeneous elements in $P$ graded by
the positive matrix 
 $W_n = \left(\begin{smallmatrix} 0 & 1 & \cdots & n\cr 1 & 1 &  \cdots & 1  
 \end{smallmatrix}  \right)$.

 Finally, classical results show that $S_n$
 is finitely generated and, for some~$n$, 
 compute the bi-degrees of a minimal set of generators.
 Consequently, according to Proposition~\ref{prop:truncsatsagbi} we
 can compute $S_n$
 by truncating the \satsagbi at the  maximum weighted degree
 given by the grading $(0 \ \ 1 \  \cdots\  n)$,
the first row of $W_n$.
And this is what we are able to do for the easy cases
 $S_3$ and $S_4$ and for the non-trivial cases~$S_5$ and $S_6$. 
 Our results agree with the classical ones (see~\cite{pop}). 
 Our main contribution is that we are able to directly compute the invariants.

\begin{example}\label{ex:U3}
In a split second the computation of $S_3$ yields the following result.
We~have $S_3  = \CC[a_0, 2g_2, 3g_3, g_4]$ where 
$g_4 = a_1^2a_2^2 -2a_1^3a_3 -\tfrac{8}{3}a_0a_2^3 +6a_0a_1a_2a_3 -3a_0^2a_3^2$.
\end{example}

\begin{example}\label{ex:U4}
In a split second the computation of $S_4$  yields the following result.

We have $S_4= \CC[a_0, 2g_2, 3g_3, g_4,g_5]$ where 
$g_4 = a_2^2 -2a_1a_3 +2a_0a_4$ and 

${g_5 = a_2^3 -3a_1a_2a_3 +3a_1^2a_4 +\tfrac{9}{2}a_0a_3^2 -6a_0a_2a_4}$.
\end{example}

Here we come to the non-trivial cases.

\begin{example}\label{ex:U5}
It is known that the highest weighted degree of a generator in a set of 
minimal generators of $S_5$  is 45. Therefore we compute a 
$\sigma$-\satsagbi of $\QQ[a_0, c_2,\dots, c_5]$ 
truncated in weighted degree~45,
where $\sigma$ is a term ordering $a_0$-DegRev type compatible with~$W$.

We need about 7 minutes to compute a set of  57 generators of the truncated
\satsagbi and another 5 minutes to minimalize it. The conclusion is that
we get 23 generators. Their leading terms are 
$$
\begin{array}{l}
a_0, \,  a_1^2,\;  a_1^3,\;  a_2^2,\;  a_1a_2^2,\;  a_2^3,\;  a_1a_2^3,\;  a_1^2a_3^2,\;  a_1^3a_3^2,\; 
 a_2^2a_3^2,\;   a_1a_2^2a_3^2,\;  a_2^3a_3^2,\;  a_1a_2^3a_3^2,\;  a_1a_2^2a_3^3,\;  a_1^3a_3^4, \\ 
 a_2^4a_3^3 a_2^5a_3^3,\;  a_1^2a_2^2a_3^2a_4^2,\;  a_1^2a_2^2a_3^5,\;  a_1^2a_2^2a_3^7,\;  
 a_1^2a_2^2a_3^8,\;  a_1^2a_2^3a_3^8,\;  a_1^2a_2^5a_3^{11}
  \end{array}
$$ 
Their bi-degrees are
$$
\begin{array}{l}
(0,  1),\,  (2,  2),\,  (3,  3), (4,  2),\,  (5,  3),\,  (6,  3),\,  (7,  4),  (8,  4),\,  (9,  5),\,  (10,  4),  
(11,  5),\,  (12,  5),\, (13,  6),  \\
 (14,  6),\,  (15,  7),\,  (17,  7),\,  (19,  8),\,  (20,  8),\,  (21,  9),\,  (27,  11),\,  (30,  12),\,  (32,  13),\,  (45,  18)
 \end{array}
$$ 
The sizes of the supports of the 23 polynomials are
$$
(1,\;  2,\;  3,\;  3,\;  5,\;  5,\;  9,\;  9,\;  13,\;  12,\;  17,\;  20,\;  29,\;  30,\;  36,\;  49,\;  65,\;  
59,\;  93,\;  183,\;  247,\;  319,\;  848)
$$
For the interested reader
the following link provides the code we wrote and the actual
polynomials we computed \\
 \small{ \verb|http://www.dima.unige.it/~bigatti/data/ComputingSaturationsOfSubalgebras/|.}
 \end{example}
 
\begin{example}\label{ex:U6}
It is known that the highest weighted degree of a generator in a set of 
minimal generators of $S_6$ is 45. Therefore we compute a 
$\sigma$-\satsagbi of $\QQ[a_0, c_2,\dots, c_6]$ truncated in weighted degree~45,
where $\sigma$ is a term ordering $a_0$-DegRev type compatible with~$W$.

We need about  2 hours and 15  minutes to compute a set of  83   generators of the truncated
\satsagbi and another 1 hour and   40  minutes to minimalize it. The conclusion is that
we get 26 generators. Their leading terms are 
$$
\begin{array}{l}
 a_0,\; a_1^2,\; a_1^3,\, a_2^2,\; a_1a_2^2,\; a_3^2,\; a_2^3,\; a_1a_2^3,\; a_2a_3^2,\; a_1a_2a_3^2,\; a_1a_3^3,\;
 a_1^2a_3^3,\, a_2^2a_4^2,\, a_2^2a_3^3,\, a_2^3a_4^2,\, a_1a_2^3a_4^2,\\
 a_2^3a_3^3,\, a_2^3a_4^3,\,
 a_1a_2^2a_3^2a_4^2,\, a_2^4a_4^3,\, a_2^3a_3^3a_4^2,\,  a_1a_2^3a_3^2a_4^3,\; a_2^4a_3^3a_4^3,\,
a_1^2a_2^2a_3^2a_4^2a_5^2,\,
a_1^2a_2^2a_3^3a_4^5,\; a_1^2a_2^2a_3^5a_4^6
\end{array}  
$$
Their bi-degrees are
$$
\begin{array}{l}
(0,  1),\; (2,  2),\, (3,  3),\;  (4,  2),\;  (5,  3),\;  (6,  2),\;  (6,  3),\;  (7,  4), \; (8,  3),\;  (9,  4), \; (10,  4), \; (11,  5), \\
(12,  4), \; (13,  5),\;  (14,  5), \; (15,  6), \; (15,  6), \; (18,  6), \; (19,  7),\;  (20,  7), \; (23,  8),\,  (25,  9),\, (29,  10),\\  
(30,  10),\; (35,  12), \; (45,  15)
 \end{array}
$$ 
The sizes of the supports of the 26 polynomials are
$$
\begin{array}{l}
1,\;  2,\;  3,\;  3,\;  5,\;  4,\;  6,\;  9,\;  8,\;  13,\;  12,\;  20,\;  16,\;  28,\;  29,\;  42,\;  47,\;  52,\;  77,\;  85,\;  135,\;  196,\;  312,
\\  
246,\,  586,\,  1370
\end{array}
$$
As in Example~\ref{ex:U5}, the following link provides the code and the 
polynomials\\
 \small{ \verb|http://www.dima.unige.it/~bigatti/data/ComputingSaturationsOfSubalgebras/|.}
\end{example}

 \section{Conclusions}
In this paper we deal with the problem of saturating~$S$
with respect to~$g$, and we denote the resulting \hbox{$K$-algebra}  by~$\Sat{g}(S)$.
Here $S$ is a finitely generated $K$-subalgebra of a polynomial ring $P=K[a_0,a_1, \dots, a_n]$
with $K$ being any field.
It turns out that $\Sat{g}(S) = S:g^\infty$ if $g\in S$ which we
always assume throughout this paper.
After several preparatory results we get Algorithm~\ref{alg:SubalgebraSaturation} which solves the problem
if $\Sat{g}(S)$ is a finitely generated $K$-algebra. If not, the algorithm is simply a procedure which
allows us to get closer and closer to the saturation. 
As said in the introduction, ~\cite{DK2015} contains a similar result
(see~\cite[Semi-algorithm 4.10.16]{DK2015}).

Then we introduce techniques coming from the theory of \sagbis which show their power mainly
in the case that $S$ is graded. We describe an algorithm which allows to minimalize a given set of 
homogeneous generators of a $K$-subalgebra of $P$ (see Algorithm~\ref{alg:SubalgebraMinGens}).
Then Theorem~\ref{thm:SatandSAGBI} illustrates a nice interplay between saturating $S$ with
respect to an indeterminate and computing a special \sagbi of $S$. The first output of this theorem
is Procedure~\ref{alg:SatSAGBI} whose power is illustrated by some
interesting examples.
We prove that Procedure~\ref{alg:SatSAGBI} is correct and conjecture that it terminates
whenever $\Sat{g}(S)$ is a finitely generated $K$-algebra (see Conjecture~\ref{conj:SatSAGBI}).

The final part of the paper is dedicated to find a direct attack to the problem of computing 
the algebras $S_n$ of $U$-invariants, a classical problem which goes back 
to the nineteenth century.
We succeed up to degree 6,  we do it without the assumption that $K = \CC$, 
and we are able to compute not only a minimal set of $U$-invariants, 
but also a truncated \sagbi of the corresponding algebra.

If $g \notin S$ we denote $S[g^{-1}] \cap P$ by \textit{weak saturation} of $S$ with respect to $g$.
It turns out that this algebra is very different from $S:g^\infty$ which, in general, is not even an algebra.
The problem of computing  $S[g^{-1}] \cap P$ if $g \notin S$ can be inspiration for future research.


\begin{thebibliography}{10}

\bibitem{BLR}
A.M. Bigatti, R.~La Scala, and L.~Robbiano.
\newblock {Computing Toric Ideals}.
\newblock {\em J. Symb. Comput.}, 27:351--365, 1999.

\bibitem{Bra}
A.~Bravo.
\newblock {Some Facts About Canonical Subalgebra Bases}.
\newblock In {\em Trends in Commutative Algebra}, volume~51, pages 247--254.
  MSRI Publications, 2004.

\bibitem{CHV}
A.~Conca, J.~Herzog, and G.~Valla.
\newblock {Sagbi bases with applications to blow-up algebras}.
\newblock {\em J. Reine Angew. Math.}, 474:113--138, 1996.

\bibitem{DK2015}
H.~Derksen and G.~Kemper.
\newblock {\em Computational Invariant Theory}.
\newblock Number 130 in Encyclopaedia of Mathematical Sciences. Springer,
  Heidelberg, Heidelberg, New York, Dordrecht, London, 2 edition, 2015.

\bibitem{pop}
M.~Popoviciu Draisma.
\newblock {\em {Invariants of Binary Forms}}.
\newblock PhD thesis, Mathematics, Basel, 2013.

\bibitem{KaMa}
D.~Kapur and K.~Madlener.
\newblock {A Completion Procedure for Computing a Canonical Basis for a
  k-Subalgebra}.
\newblock {\em Proc. of Computers and Mathematics '89, MIT, Cambridge}, pages
  1--11, 1989.

\bibitem{KP}
H.~Kraft and C.~Procesi.
\newblock {Perpetuants: A Lost Treasure}.
\newblock {\texttt{ArXiv:1810.01131v1,v3}}, 2018--2020.

\bibitem{KR1}
M.~Kreuzer and L.~Robbiano.
\newblock {\em Computational Commutative Algebra 1}.
\newblock Springer, Heidelberg, 2000, second edition 2008.

\bibitem{KR2}
M.~Kreuzer and L.~Robbiano.
\newblock {\em Computational Commutative Algebra 2}.
\newblock Springer, Heidelberg, 2005.

\bibitem{Mondal2017}
P.~Mondal.
\newblock When is the intersection of two finitely generated subalgebras of a
  polynomial ring also finitely generated?
\newblock {\em Arnold Mathematical Journal}, 3(3):333--350, 2017.

\bibitem{RSOld}
L.~Robbiano and M.~Sweedler.
\newblock {Subalgebra Bases}.
\newblock In {\em Proceedings, Salvador 1988, Lect. Notes Math.}, volume 1430,
  pages 61--87, 1990.

\bibitem{RS}
L.~Robbiano and M.~Sweedler.
\newblock {Ideal and Subalgebra Coefficients}.
\newblock {\em Proc.\ Amer.\ Math.\ Soc.}, 126:2213--2219, 1998.

\bibitem{StTs}
M.~Stillman and H.~Tsai.
\newblock {Using SAGBI bases to compute invariants}.
\newblock {\em J.\ Pure Appl.\ Algebra}, 139:285--302, 1999.

\bibitem{Stu96}
B.~Sturmfels.
\newblock {\em Gr{\"o}bner Bases and Convex Polytopes}.
\newblock American Mathematical Society, Providence, RI, 1996.

\end{thebibliography}
\bibliographystyle{plain}

\end{document}